\newtheorem{theorem}{Theorem}[section]
\newtheorem{lemma}[theorem]{Lemma}
\newtheorem{proposition}[theorem]{Proposition}
\newtheorem{cor}[theorem]{Corollary}
\newtheorem{definition}[theorem]{Definition}
\newtheorem{example}[theorem]{Example}
\theoremstyle{remark}
\newtheorem{remark}[theorem]{\bf{Remark}}
\numberwithin{equation}{section}
\begin{document}

\title [A note on the $A$-numerical range of semi-Hilbertian operators]{\Small{A note on the $A$-numerical range of semi-Hilbertian operators}}
	\author[A. Sen, R. Birbonshi and K. Paul]{Anirban Sen, Riddhick Birbonshi and Kallol Paul}
	
\address[Sen] {Department of Mathematics, Jadavpur University, Kolkata 700032, West Bengal, India}
\email{anirbansenfulia@gmail.com}

\address[Birbonshi] {Department of Mathematics, Jadavpur University, Kolkata 700032, West Bengal, India}
\email{riddhick.math@gmail.com}

\address[Paul] {Department of Mathematics, Jadavpur University, Kolkata 700032, West Bengal, India}
\email{kalloldada@gmail.com}

\thanks{Mr. Anirban Sen would like to thank CSIR, Govt. of India, for the financial
	support in the form of Senior Research Fellowship under the mentorship of
	Prof. Kallol Paul}

\subjclass[2020]{Primary: 46C05; Secondary: 47A05.}

\keywords{$A$-numerical range, Positive operator, Semi-Hilbertian space.}

\maketitle	

\begin{abstract} 
	In this paper we explore the relation between the $A$-numerical range and the $A$-spectrum of $A$-bounded operators in the setting of semi-Hilbertian structure. We introduce a new definition of $A$-normal operator and prove that closure of the $A$-numerical range of an $A$-normal operator is the convex hull of the $A$-spectrum. We further prove Anderson's theorem for the sum of $A$-normal and $A$-compact operators which improves and generalizes the existing result on Anderson's theorem for $A$-compact operators. Finally we introduce strongly $A$-numerically closed class of operators and along with other results prove that the class of $A$-normal operators is strongly $A$-numerically closed.
\end{abstract}

\section{Introduction and preliminaries}

%The study of semi-Hilbertian space and semi-Hilbertian operators is of strong interest because not only it is a generalization of Hilbert space in functional analysis but also it provides a more general point of view for defining operators that represents physical observables in quantum mechanics (QM in short). In the mathematical formulation of QM, the physical state of a quantum system is a non-zero element of a complex separable Hilbert space $\mathcal{H}.$ The physical quantities of interest are represented by observables, which are Hermitian(or selfadjoint) linear operators acting on $\mathcal{H}.$ For the proper quantum mechanical interpretation the observables must be Hermitian with respect to the inner product of $\mathcal{H}.$ The theory of quasi-Hermitian QM \cite{SGH_1992} and pseudo-Hermitian QM \cite{das_2011,mosta_2010} based on the use of non-Hermitian observables whose Hermitian character can be restored by changing the inner product via a strictly positive operator, called metric operator \cite{AT_2013}. Then everything is calculated in terms of this new inner product and the requirements of the quantum mechanical interpretation are fulfilled. Before proceeding further we introduce some notations, definitions and results that are needed to reach our goal. 

Let $\mathcal{H}$ be a complex separable infinite dimensional Hilbert space endowed with the inner product $\langle \cdot, \cdot \rangle$ and let $\|\cdot\|$ be the norm induced by the inner product. Let $\mathcal{B}(\mathcal{H})$ and $\mathcal{K}(\mathcal{H})$ be the algebra of all bounded linear operators and compact linear operators on $\mathcal{H},$ respectively.  Throughout this article, for a subset $\Delta$ of the complex field $\mathbb{C}$, $conv(\Delta)$ stands for the convex hull of the subset $\Delta$ and $\mathbb{D}$ stands for the open unit disk, i.e., $\mathbb{D}=\{z \in \mathbb{C} : |z|<1\}.$
For any $T \in \mathcal{B}(\mathcal{H}),$ the range and the null space of $T$ are denoted by $R(T)$ and $N(T),$ respectively. The closure of $R(T)$ with respect to the usual norm of $\mathcal{H}$ is denoted by $\overline{R(T)}.$
An operator $A \in \mathcal{B}(\mathcal{H})$ is called positive if $\langle Ax,x\rangle \geq 0$ for all $x \in \mathcal{H}$ and we use the notation $A \geq 0$.
Henceforth we reserve the letter $A$  for a non-zero positive operator on $\mathcal{H}.$  The orthogonal projection on $\overline{R(A)}$  is denoted by  $P_{\overline{R(A)}}.$  
Naturally, $A$ generates a semi-inner product $\langle \cdot, \cdot \rangle_A$ on $\mathcal{H}$ given by $\langle x,y \rangle_A=\langle Ax,y \rangle$ for all $x,y \in \mathcal{H}.$ The semi-norm induced by the semi-inner product is given by $\|x\|_A=\sqrt{\langle x,x \rangle_A}$ for all $x \in \mathcal{H}.$ The vector space $\mathcal{H}$ endowed with the semi-inner product $\langle \cdot, \cdot \rangle_A$ is called a semi-Hilbertian space.  
The semi-inner product $\langle \cdot, \cdot \rangle_A$ induces an inner product $ \left[.\, , \, .\right] $ on the quotient space $\mathcal{H}/N(A)$  defined by  $\left[\overline{x}, \overline{y} \right]=\langle x, y \rangle_A$ for all $\overline{x}, \overline{y} \in \mathcal{H}/N(A).$ The completion of
$\left(\mathcal{H}/N(A),[\cdot,\cdot]\right)$ is isometrically
isomorphic to the Hilbert space $\textbf{R}(A^{1/2})=\left(R(A^{1/2}), (\cdot, \cdot)\right),$ where the inner product $(\cdot, \cdot)$ is given by $$(A^{1/2}x, A^{1/2}y)=\langle P_{\overline{R(A)}}x, P_{\overline{R(A)}}y \rangle$$ for all $x,y \in \mathcal{H}$ \cite{branges1,feki_LAA_2020}.
Clearly  $R(A^{1/2})\subseteq \overline{R(A)}$ and so for all $x,y \in \mathcal{H}$ this leads to the following useful relations:
\begin{align*}
	(Ax, Ay)=\langle x, y \rangle_{A}~~\mbox{and}~~\|Ax\|_{\textbf{R}(A^{1/2})}=\|x\|_{A}.
\end{align*}
Moreover, $R(A)$ is dense in $\textbf{R}(A^{1/2}),$ for further readings concerning the Hilbert space $\textbf{R}(A^{1/2})$ we refer to \cite{arias 1}. For a given $T \in 
\mathcal{B}(\mathcal{H}),$ if there exists a constant $c >0$ such that $ \|Tx\|_A \leq c \|x\|_A $ for all $ x \in \overline{R(A)}$ then $A$-operator semi-norm of $T$, denoted by $\|T\|_A,$ is defined as 
\begin{align*}
	\|T\|_A=\sup_{x \in \overline{R(A)}, x \neq 0}\frac{\|Tx\|_A}{\|x\|_A}.
\end{align*}
%Observe that $ \|T\|_A < + \infty.$ 
Given $T \in \mathcal{B}(\mathcal{H}),$ an operator $W \in \mathcal{B}(\mathcal{H})$ is called an $A$-adjoint of $T$ if $\langle Tx,y \rangle_A=\langle x,Wy \rangle_A$ for all $x,y \in \mathcal{H}.$ By Douglas theorem \cite{dog1}, 
the set of all operators which admit an $A$-adjoint is denoted by $\mathcal{B}_A(\mathcal{H}),$ and is given by
$$\mathcal{B}_A(\mathcal{H})=\{T\in \mathcal{B}(\mathcal{H}) :R(T^*A) \subseteq R(A) \}.$$
Douglas theorem ensures that if $T \in \mathcal{B}_A(\mathcal{H})$ then the equation $AX=T^*A$ has a unique solution which is denoted by $T^{\sharp_A}$ and satisfies $R(T^{\sharp_A})\subseteq \overline{R(A)}.$ Here $T^{\sharp_A}=A^{\dagger}T^*A,$ where $A^{\dagger}$ is the Moore-Penrose \cite{Engl_JMAA_1981,Penr55} inverse of $A$. 
 The set of all operators admitting $A^{1/2}$-adjoint  is denoted by $\mathcal{B}_{A^{1/2}}(\mathcal{H}).$ Again by Douglas theorem
\begin{eqnarray*}
	\mathcal{B}_{A^{1/2}}(\mathcal{H})=\{T\in \mathcal{B}(\mathcal{H}) :\exists ~ c>0 ~\mbox{such that}~ \|Tx\|_A\leq c\|x\|_A~~ \forall x 
	\in \mathcal{H}\}.
\end{eqnarray*}
 An operator $T \in \mathcal{B}_{A^{1/2}}(\mathcal{H})$ is known as $A$-bounded operator and if $T \in \mathcal{B}_{A^{1/2}}(\mathcal{H})$ then $T(N(A))\subseteq N(A).$ For more details about $\mathcal{B}_{A}(\mathcal{H})$ and $\mathcal{B}_{A^{1/2}}(\mathcal{H})$ we refer  readers to \cite{arias 1,a2,a3,feki_AOFA_2020}.
 
The notion of $A$-compact operators generalizing the compact operators was introduced and studied in \cite{BKPS_LAA_23}. Let $\mathcal{K}_{A^{1/2}}(\mathcal{H})$ 
denote the collection of all $A$-compact operators. An operator $T\in \mathcal{B}_{A^{1/2}}(\mathcal{H})$ is said to be $A$-invertible in $\mathcal{B}_{A^{1/2}}(\mathcal{H})$ if there exists $S\in  \mathcal{B}_{A^{1/2}}(\mathcal{H})$ such that $ATS=AST=A$ and $S$ is called $A$-inverse of $T,$ see \cite{baklouti_BJMA_2022}. 
%The notion of resolvent set and spectrum are also extended for $A$-bounded operators 
For $T \in \mathcal{B}_{{A}^{1/2}}(\mathcal{H}),$ the $A$-resolvent set and the $A$-spectrum of $T,$  denoted respectively by $\rho_A(T)$ and $\sigma_A(T),$  are defined as
 $$\rho_A(T)=\{\lambda \in \mathbb{C} : (T-\lambda I) ~~\mbox{is $A$-invertible in}~~ \mathcal{B}_{{A}^{1/2}}(\mathcal{H})\}$$
  and 
  $$\sigma_A(T)=\{\lambda \in \mathbb{C} : (T-\lambda I) ~~\mbox{is not $A$-invertible in}~~ \mathcal{B}_{{A}^{1/2}}(\mathcal{H})\}.$$
  The $A$-spectral radius of $T$  (see \cite{Saddi}) is defined as 
\begin{align}\label{r1}
r_{A}(T)=\lim_{n \to \infty}\|T^n\|^{1/n}_{A}.
\end{align}
In  \cite{feki_BJOM_2022}, it is proved that 
\begin{align}\label{r2}
r_{A}(T)=\sup\{|\lambda| : \lambda \in \sigma_A(T)\}.
\end{align}
For $T \in \mathcal{B}_{A^{1/2}}(\mathcal{H}),$ the $A$-numerical
range and $A$-numerical radius of $T,$ denoted respectively by $W_{A}(T)$ and $w_A(T),$  are defined as 
 $$W_{A}(T)=\{ \langle Tx,x \rangle_{A} :  x \in \mathcal{H}, \|x\|_{A}=1 \}$$
 and
 $$w_{A}(T)=\sup \{ |\lambda| :  \lambda \in W_{A}(T) \}.$$
Clearly $w_A(T)\leq \|T\|_A$. It is well known that for any $T \in \mathcal{B}_{A^{1/2}}(\mathcal{H}),$ the $A$-numerical range of $T$ is a convex subset of $\mathbb{C}$ (see \cite[Th. 2.1]{bak}). 
The $A$-numerical radius and $A$-spectral radius inequalities have been studied by many mathematicians  \cite{BKPS_LAA_23,BNK_RM_2021,feki_BJOM_2022,KZ_JCAM_2023,MXZ_LAA_2020,zamani} over the years. To the best of our knowledge,  there is hardly any literature available on the $A$-numerical range of semi-Hilbertian operators. The main purpose of this article is to study the $A$-numerical range of semi-Hilbertian operators. We also study the $A$-numerical range and $A$-spectrum of $A$-normal operators, the definition of which is introduced here in a slightly different way.
Following \cite{arias 1}, an operator $T \in \mathcal{B}_{A}(\mathcal{H})$ was defined as  $A$-normal if $T^{\sharp_A}T=TT^{\sharp_A}.$  However, we here propose to refine the definition of $A$-normal operator as follows: 
\begin{definition}\label{d1}
	An operator $T \in \mathcal{B}_{A}(\mathcal{H})$ is said to be $A$-normal if
	 \[ AT^{\sharp_A}T=ATT^{\sharp_A} .\]
	\end{definition}\label{new-normal}
	The justification for refining the definition of $A$-normal operator will be  clear from the relevant results developed in this paper in due course of time. 
	 Clearly, if $T$ is an $A$-normal operator in the sense of \cite{arias 1} then $T$ is $A$-normal in the sense of Definition 1.1.  However, the converse is not necessarily true. For example, consider the matrices $A= \begin{pmatrix}
	1&1\\
	1&1
\end{pmatrix}$ and $T=\begin{pmatrix}
	2&2\\
	0&0
\end{pmatrix}.$ Then  we get,  $TT^{\sharp_A}=\begin{pmatrix}
	4&4\\
	0&0
\end{pmatrix} \neq 
\begin{pmatrix}
	2&2\\
	2&2
\end{pmatrix}=T^{\sharp_A}T$ but 
$ATT^{\sharp_A}=AT^{\sharp_A}T=\begin{pmatrix}
	4&4\\
	4&4
\end{pmatrix}.$  Observe that the converse is true if $T$ satisfies the relation $R(TT^{\sharp_A})\subseteq \overline{R(A)}.$  Thus the class of $A$-normal operators  introduced here is larger than that defined in \cite{arias 1}. Henceforth, we consider $A$-normal operator as per  Definition \ref{d1}. 

We prove that if $T$ is an $A$-normal operator then 
$\overline{W_A(T)}=conv(\sigma_A(T)),$ which generalizes the classical 
result  that the closure of the numerical range of a normal operator is the convex hull of its spectrum \cite{Berberian_DJM_1981}. We prove that $T$ is $A$-normal if and only if $ \|Tx\|_A = \|T^{\sharp A}x \|_A$ for all $ x \in \mathcal{H},$ which again generalizes the analogous result for normal operators. 
 A new version of Anderson's theorem for semi-Hilbertian operators is proved in Section \ref{s2} which generalizes both \cite[Th. 2.7]{BKPS_LAA_23} and \cite[Th. 4]{BSS_JMA_2018}.
In Section \ref{s3}, we investigate which classes of operators (say $\mathcal{C}$) in $\mathcal{B}_{A^{1/2}}(\mathcal{H})$ has the property that for any $T \in \mathcal{C}$ and $\epsilon>0$ there exists $K \in \mathcal{K}_{{A}^{1/2}}(\mathcal{H})$ with $\|K\|_A<\epsilon$ such that $T+K \in \mathcal{C}$ and $\overline{W_A(T+K)}=W_A(T+K).$ We call such an operator  $T \in \mathcal{C}$ as $A$-strongly numerically closed. Moreover, we prove that if $R(A)$ is closed then  $\mathcal{B}_{A^{1/2}}(\mathcal{H})$ and some of its sub-classes are strongly $A$-numerically closed. Our results can be considered as  generalizations of \cite[Prop. 1.3]{Bourin_JOT_2003}, \cite[Th. 1.3]{liang_JMAA_2023} and \cite[Prop. 5.1]{zhu_BJMA_2015}. Finally, we construct an example to justify that the closedness of $R(A)$ is not a necessary condition.  
%Finally, we end this article with some open questions. 

\section{$A$-spectrum and $A$-numerical range}\label{s1}

We begin this section with the following lemma which provide important connections between the elements of $\mathcal{B}_{A^{1/2}}(\mathcal{H})$ and $\mathcal{B}(\textbf{R}(A^{1/2})).$ The operator $Z_A:\mathcal{H}\to \textbf{R}(A^{1/2}),$ defined by $Z_Ax=Ax$ for all  $x\in \mathcal{H},$ plays an important role in this interplay. 

\begin{lemma}\cite{arias 1,feki_LAA_2020}\label{lemma1-2}
(i) 	Let $T \in \mathcal{B}(\mathcal{H}).$ Then $T \in \mathcal{B}_{A^{1/2}}(\mathcal{H})$ if and only if there exists a unique $\widetilde{T} \in \mathcal{B}(\textbf{R}(A^{1/2}))$ such that $Z_AT=\widetilde{T}Z_A.$ 
Moreover, if $T,S \in \mathcal{B}_{A^{1/2}}(\mathcal{H})$ and $\lambda \in \mathbb{C}$ then $\widetilde{T+\lambda S}=\widetilde{T}+\lambda \widetilde{S}$ and $\widetilde{TS}=\widetilde{T}\widetilde{S}.$ \\

(ii)  Let  $\widetilde{T} \in \mathcal{B}(\textbf{R}(A^{1/2})).$  Then there exists $T \in \mathcal{B}(\mathcal{H})$ such that $Z_AT=\widetilde{T}Z_A$ if and only if $R(\widetilde{T}Z_A) \subseteq R(Z_A)=R(A).$ In such case there exists a unique $T \in \mathcal{B}_{A^{1/2}}(\mathcal{H})$ such that $R(T) \subseteq \overline{R(A)}.$
	
		Furthermore, if $R(A)$ is closed then given any $\widetilde{T} \in \mathcal{B}(\textbf{R}(A^{1/2}))$ there exists a unique $T \in \mathcal{B}_{A^{1/2}}(\mathcal{H})$ with $R(T) \subseteq {R(A)}$ such that $Z_AT=\widetilde{T}Z_A.$ 
%	Moreover, if $T,S \in \mathcal{B}_{A^{1/2}}(\mathcal{H})$ and $\lambda \in \mathbb{C}$ then $\widetilde{T+\lambda S}=\widetilde{T}+\lambda \widetilde{S}$ and $\widetilde{TS}=\widetilde{T}\widetilde{S}.$
\end{lemma}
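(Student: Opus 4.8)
The plan is to build everything on the single map $Z_A\colon\mathcal H\to\textbf{R}(A^{1/2})$, $Z_Ax=Ax$, using its two defining features: first, the isometry-type identity $\|Z_Ax\|_{\textbf{R}(A^{1/2})}=\|Ax\|_{\textbf{R}(A^{1/2})}=\|x\|_A$ for every $x\in\mathcal H$ (so $Z_A$ is bounded, with $\|Z_A\|\le\|A\|^{1/2}$); second, the fact that $R(Z_A)=R(A)$ is dense in $\textbf{R}(A^{1/2})$. For the ``if'' direction of (i), if $Z_AT=\widetilde T Z_A$ for some $\widetilde T\in\mathcal B(\textbf{R}(A^{1/2}))$, then $\|Tx\|_A=\|Z_ATx\|_{\textbf{R}(A^{1/2})}=\|\widetilde T Z_Ax\|_{\textbf{R}(A^{1/2})}\le\|\widetilde T\|\,\|x\|_A$ for all $x$, so $T\in\mathcal B_{A^{1/2}}(\mathcal H)$; uniqueness of $\widetilde T$ is forced because two bounded operators on $\textbf{R}(A^{1/2})$ agreeing on the dense set $R(Z_A)$ coincide.

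For the ``only if'' direction of (i), given $T\in\mathcal B_{A^{1/2}}(\mathcal H)$ with $\|Tx\|_A\le c\|x\|_A$, I would define $\widetilde T_0$ on the dense subspace $R(A)$ of $\textbf{R}(A^{1/2})$ by $\widetilde T_0(Ax)=ATx=Z_ATx$. This is well defined: if $Ax=Ay$ then $\|x-y\|_A=0$, hence $\|T(x-y)\|_A=0$, hence $A(Tx-Ty)=0$, since $\|z\|_A=0\iff Az=0$. It is bounded on $R(A)$ because $\|\widetilde T_0(Ax)\|_{\textbf{R}(A^{1/2})}=\|Tx\|_A\le c\|x\|_A=c\|Ax\|_{\textbf{R}(A^{1/2})}$, so it extends uniquely to $\widetilde T\in\mathcal B(\textbf{R}(A^{1/2}))$ with $\|\widetilde T\|\le c$ and $Z_AT=\widetilde T Z_A$ by construction. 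The algebraic identities $\widetilde{T+\lambda S}=\widetilde T+\lambda\widetilde S$ and $\widetilde{TS}=\widetilde T\widetilde S$ then follow from the uniqueness clause, since both sides composed on the right with $Z_A$ give the same operator (for instance $\widetilde T\widetilde S Z_A=\widetilde T Z_A S=Z_A TS$).

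For (ii), necessity of $R(\widetilde T Z_A)\subseteq R(Z_A)=R(A)$ is immediate, since $Z_AT=\widetilde T Z_A$ forces $\widetilde T(Ax)=A(Tx)\in R(A)$ for all $x$. For sufficiency I would invoke the Douglas range-inclusion theorem applied to the operator equation $Z_AX=\widetilde T Z_A$ with unknown $X\in\mathcal B(\mathcal H)$: since $R(\widetilde T Z_A)\subseteq R(Z_A)$, this equation admits a bounded solution, and Douglas' theorem singles out the unique solution $T$ with $R(T)\subseteq\overline{R(Z_A^{*})}$. A short computation identifies $Z_A^{*}\colon\textbf{R}(A^{1/2})\to\mathcal H$ with the canonical inclusion: from $\langle Z_Ax,Ay\rangle_{\textbf{R}(A^{1/2})}=\langle x,y\rangle_A=\langle x,Ay\rangle$ one gets $Z_A^{*}(Ay)=Ay$, and then extends by density, so $\overline{R(Z_A^{*})}=\overline{R(A^{1/2})}=\overline{R(A)}$; and this $T$ lies in $\mathcal B_{A^{1/2}}(\mathcal H)$ by part (i). If $R(A)$ is closed, then $R(A^{1/2})=R(A)=\overline{R(A)}$ and $\textbf{R}(A^{1/2})$ carries a norm equivalent to that of $\mathcal H$, so the range condition $R(\widetilde T Z_A)\subseteq R(A)$ holds automatically and the distinguished solution satisfies $R(T)\subseteq\overline{R(A)}=R(A)$.

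The routine parts are the density and extension arguments; the only place demanding a little care is part (ii)—pinning down $Z_A^{*}$ and the identification $\overline{R(Z_A^{*})}=\overline{R(A)}$, together with the standard fact that closedness of $R(A)$ forces $R(A^{1/2})=R(A)$. Once these are in hand, Douglas' theorem does the rest.
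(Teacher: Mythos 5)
Your proposal is correct. Note that the paper itself gives no proof of this lemma: it is quoted as background from \cite{arias 1,feki_LAA_2020}, and your argument is essentially the standard one from those sources — for (i), defining $\widetilde{T}$ on the dense subspace $R(A)=R(Z_A)$ of $\textbf{R}(A^{1/2})$ via $\widetilde{T}(Ax)=ATx$ and extending by continuity (with uniqueness and the algebraic identities following from density), and for (ii), applying Douglas' factorization theorem to $Z_AX=\widetilde{T}Z_A$ together with the identifications $Z_A^*=$ the inclusion of $\textbf{R}(A^{1/2})$ into $\mathcal{H}$, $\overline{R(Z_A^*)}=\overline{R(A)}$, and the standard fact that $R(A)$ closed forces $R(A^{1/2})=R(A)$, all of which you verify correctly.
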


%\begin{lemma}\cite{arias 1}\label{lemma2}
%	Given $\widetilde{T} \in \mathcal{B}(\textbf{R}(A^{1/2}))$ there exists $T \in \mathcal{B}(\mathcal{H})$ such that $Z_AT=\widetilde{T}Z_A$ if and only if $R(\widetilde{T}Z_A) \subseteq R(Z_A)=R(A).$ In such case there exists a unique $T \in \mathcal{B}_{A^{1/2}}(\mathcal{H})$ such that $R(T) \subseteq \overline{R(A)}.$
	
%	Furthermore, if $R(A)$ is closed then given any $\widetilde{T} \in \mathcal{B}(\textbf{R}(A^{1/2}))$ there exists a unique $T \in \mathcal{B}_{A^{1/2}}(\mathcal{H})$ with $R(T) \subseteq {R(A)}$ such that $Z_AT=\widetilde{T}Z_A.$
%\end{lemma}	
The next lemma provides some useful results.
\begin{lemma}\label{lemma3-6}
	\begin{enumerate}
\item [(i)]	Let $T \in \mathcal{B}_A(\mathcal{H}).$ Then $\widetilde{T^{\sharp_A}}=(\widetilde{T})^*$ (see \cite{maj_LAMA}). 
\item [(ii)]		Let $T \in \mathcal{B}_{A^{1/2}}(\mathcal{H}).$ Then 
$\|T\|_A=\|\widetilde{T}\|_{\mathcal{B}(\textbf{R}(A^{1/2}))}$ (see \cite{feki_AOFA_2020}). 
%\item [(iii)]	 Let $T,S \in \mathcal{B}_{A^{1/2}}(\mathcal{H})$ and $\lambda \in \mathbb{C}$ then $\widetilde{T+\lambda S}=\widetilde{T}+\lambda \widetilde{S}$ and $\widetilde{TS}=\widetilde{T}\widetilde{S}$ (see \cite{feki_LAA_2020}).
\item [(iii)]		Let $T \in \mathcal{B}_{A^{1/2}}(\mathcal{H})$ and $R(A)$ be closed. Then $T$ is $A$-compact if and only if $\widetilde{T}$ is compact (see \cite{BKPS_LAA_23}).
\item [(iv)]   Let $T \in \mathcal{B}_{A^{1/2}}(\mathcal{H}).$ Then 
\begin{eqnarray*}
	W_A(T) \subseteq W(\widetilde{T}) \subseteq \overline{W_A(T)}.
\end{eqnarray*}
Furthermore, if $R(A)$ is closed then $W_A(T) = W(\widetilde{T})$ (see \cite{BKPS_LAA_23}). 

\end{enumerate}
\end{lemma}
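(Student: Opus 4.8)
The plan is to derive all four parts from the single intertwining identity $Z_AT=\widetilde{T}Z_A$ of Lemma \ref{lemma1-2}, together with the structural identities $(Ax,Ay)=\langle x,y\rangle_A$ and $\|Ax\|_{\textbf{R}(A^{1/2})}=\|x\|_A$ for $x,y\in\mathcal{H}$, and the density of $R(Z_A)=R(A)$ in $\textbf{R}(A^{1/2})$. Each of (i), (ii) and (iv) then reduces to a short computation on the dense subspace $R(A)$ followed by a continuity/density argument, while (iii) is the only part that genuinely imports outside input, namely the definition and basic theory of $A$-compact operators from \cite{BKPS_LAA_23}. Since all four assertions are already recorded in \cite{maj_LAMA,feki_AOFA_2020,BKPS_LAA_23}, the most economical write-up simply cites them; what follows indicates the (routine) verifications.

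For (i), take $T\in\mathcal{B}_A(\mathcal{H})$ and arbitrary $x,y\in\mathcal{H}$ and compute
\[
(\widetilde{T}Z_Ax,Z_Ay)=(Z_ATx,Z_Ay)=(A(Tx),Ay)=\langle Tx,y\rangle_A=\langle x,T^{\sharp_A}y\rangle_A=(Z_Ax,Z_AT^{\sharp_A}y)=(Z_Ax,\widetilde{T^{\sharp_A}}Z_Ay).
\]
Thus $(\widetilde{T})^{*}$ and $\widetilde{T^{\sharp_A}}$ agree on the dense subspace $R(A)$ of $\textbf{R}(A^{1/2})$, and being bounded they coincide. For (ii), since $R(A)$ is dense and $\widetilde{T}$ is bounded, $\|\widetilde{T}\|$ equals the supremum of $\|\widetilde{T}Z_Ax\|/\|Z_Ax\|$ over all $x$ with $Z_Ax\neq 0$; using $\|\widetilde{T}Z_Ax\|=\|Z_ATx\|=\|Tx\|_A$ and $\|Z_Ax\|=\|x\|_A$, and the fact that $\|\cdot\|_A$ depends only on the $\overline{R(A)}$-component of a vector (so that the supremum defining $\|T\|_A$ may be taken over all $x$ with $\|x\|_A\neq 0$), this supremum is exactly $\|T\|_A$.

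For (iv), if $\|x\|_A=1$ then $u:=Z_Ax=Ax$ satisfies $\|u\|_{\textbf{R}(A^{1/2})}=1$ and $\langle Tx,x\rangle_A=(Z_ATx,Z_Ax)=(\widetilde{T}u,u)$, which gives $W_A(T)\subseteq W(\widetilde{T})$. Conversely, for a unit vector $u\in\textbf{R}(A^{1/2})$ choose $x_n\in\mathcal{H}$ with $Ax_n\to u$; then $\|x_n\|_A=\|Ax_n\|_{\textbf{R}(A^{1/2})}\to 1$, so for large $n$ the vectors $y_n:=x_n/\|x_n\|_A$ are defined with $\|y_n\|_A=1$, and $\langle Ty_n,y_n\rangle_A=(\widetilde{T}(Ay_n),Ay_n)\to(\widetilde{T}u,u)$ by boundedness of $\widetilde{T}$; hence $W(\widetilde{T})\subseteq\overline{W_A(T)}$. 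When $R(A)$ is closed, $R(A)=\textbf{R}(A^{1/2})$ as sets, so every unit $u$ is literally $Ax$ for some $x$ with $\|x\|_A=1$ and the first computation already yields $W(\widetilde{T})=W_A(T)$ with no closure.

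For (iii), I would invoke the definition of $\mathcal{K}_{A^{1/2}}(\mathcal{H})$ from \cite{BKPS_LAA_23} and use that, when $R(A)$ is closed, $Z_A$ restricts to a bounded linear isomorphism of $\overline{R(A)}$ onto $\textbf{R}(A^{1/2})$, with $\widetilde{T}$ the transport of $T|_{\overline{R(A)}}$ under this isomorphism; since compactness is preserved and reflected by topological isomorphisms, $T$ is $A$-compact if and only if $\widetilde{T}$ is compact. The closedness of $R(A)$ is precisely what upgrades the merely dense range of $Z_A$ into a genuine isomorphism, so it cannot be dropped here; this is the only real subtlety in the lemma, the remainder being routine bookkeeping between the semi-norm on $\mathcal{H}$ and the norm on $\textbf{R}(A^{1/2})$ carried out through $Z_A$.
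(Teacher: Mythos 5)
The paper does not actually prove this lemma at all: it is stated purely as a collection of known facts, with each item attributed to the cited sources (\cite{maj_LAMA} for (i), \cite{feki_AOFA_2020} for (ii), \cite{BKPS_LAA_23} for (iii) and (iv)), and no argument is given in the text. Your proposal therefore goes further than the paper by supplying the verifications, and the ones you give for (i), (ii) and (iv) are correct and run exactly along the lines one would expect: everything follows from $Z_AT=\widetilde{T}Z_A$, the identities $(Ax,Ay)=\langle x,y\rangle_A$ and $\|Ax\|_{\textbf{R}(A^{1/2})}=\|x\|_A$, and density of $R(A)$ in $\textbf{R}(A^{1/2})$. Two small points deserve attention. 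In (ii), passing from the paper's definition of $\|T\|_A$ (supremum over nonzero $x\in\overline{R(A)}$) to the supremum over all $x$ with $\|x\|_A\neq 0$ uses not only that $\|x\|_A$ depends on the $\overline{R(A)}$-component of $x$ but also the invariance $T(N(A))\subseteq N(A)$ for $T\in\mathcal{B}_{A^{1/2}}(\mathcal{H})$, so that $\|Tx\|_A=\|TP_{\overline{R(A)}}x\|_A$; this fact is recorded in the paper and should be cited explicitly in your argument. In (iii), your description of $\widetilde{T}$ as the transport of $T|_{\overline{R(A)}}$ under the isomorphism $Z_A|_{\overline{R(A)}}$ is slightly off, since $\overline{R(A)}$ need not be invariant under $T$; the correct statement is that $\widetilde{T}$ is the transport of the compression $P_{\overline{R(A)}}T|_{\overline{R(A)}}$, because $\widetilde{T}(Ax)=ATx=AP_{\overline{R(A)}}Tx$. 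Moreover the equivalence in (iii) depends on the precise definition of $A$-compactness in \cite{BKPS_LAA_23} (sequential compactness with respect to the seminorm $\|\cdot\|_A$), which you, like the paper, ultimately defer to the reference; with that deferral your outline is acceptable, and the closedness of $R(A)$ enters exactly where you say it does.
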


We next introduce the definition of $A$-point spectrum and $A$-approximate spectrum in  the semi-Hilbertian structure and then  study their properties.

\begin{definition}
	Let $T \in \mathcal{B}_{{A}^{1/2}}(\mathcal{H}).$  The $A$-point spectrum of $T,$  denoted by $\sigma_{A_p}(T),$ is defined as 
	$$\sigma_{A_p}(T)=\{\lambda \in \mathbb{C} : \exists ~~ x(\neq 0) \in \overline{R(A)}~~\mbox{such that}~~P_{\overline{R(A)}}Tx=\lambda x\}.$$
\end{definition}
Observe that  the Moore-Penrose inverse satisfies the following relations (see\cite{Engl_JMAA_1981})
\[ A^{\dagger}A=(A^{1/2})^{\dagger}A^{1/2}=P_{\overline{R(A)}}, \, AP_{\overline{R(A)}}=A \, \mbox{and} \,A^{1/2}P_{\overline{R(A)}}=P_{\overline{R(A)}}A^{1/2}=A^{1/2}\] 
and so
the $A$-point spectrum of $T$ can also be expressed as
\begin{align*}
	\sigma_{A_p}(T)=&\{\lambda \in \mathbb{C} : \exists ~~ x(\neq 0) \in \overline{R(A)}~~\mbox{such that}~~ATx=\lambda Ax\}\\
	= &\{\lambda \in \mathbb{C} : \exists ~~ x(\neq 0) \in \overline{R(A)}~~\mbox{such that}~~A^{1/2}Tx=\lambda A^{1/2}x\}.
\end{align*}

\begin{definition}
	Let $T \in \mathcal{B}_{{A}^{1/2}}(\mathcal{H}).$ The $A$-approximate point spectrum of $T$, denoted by $\sigma_{A_{app}}(T),$  is defined as 
	$$\sigma_{A_{app}}(T)=\{\lambda \in \mathbb{C} : \exists ~~ \{x_n\} \subseteq \overline{R(A)}~~\mbox{with}~~\|x_n\|_A=1~~\mbox{such that}~~\|(T-\lambda I)x_n\|_A \to 0 \}.$$
\end{definition}
Clearly,  $\sigma_{A_p}(T) \subseteq \sigma_{A_{app}}(T).$
We next prove the following propositions which explore the  relation between the various notions of $A$-spectrum and $A$-point  spectrum, $A$-approximate point spectrum and $A$-numerical range in the semi-Hilbertian structure.

\begin{proposition} \label{P3} Let $T \in \mathcal{B}_{{A}^{1/2}}(\mathcal{H}).$ Then  \\
(i)  $\sigma_{A_p}(T) \subseteq \sigma_{A}(T).$\\
(ii)  $\sigma_{A_p}(T) \subseteq \sigma_p(\widetilde{T})$ and if $R(A)$ is closed then $\sigma_{A_{p}}(T)=\sigma_p(\widetilde{T}).$\\
(iii)  $\sigma_{A_{app}}(T) \subseteq \sigma_{app}(\widetilde{T})$ and if $R(A)$ is closed then $\sigma_{A_{app}}(T)=\sigma_{app}(\widetilde{T}).$
\end{proposition}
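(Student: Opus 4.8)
The plan is to work through the correspondence $T \leftrightarrow \widetilde{T}$ provided by Lemma \ref{lemma1-2}, exploiting the two fundamental identities $Z_A T = \widetilde{T} Z_A$ and $\|Ax\|_{\mathbf{R}(A^{1/2})} = \|x\|_A$, together with the fact that $R(A)$ is dense in $\mathbf{R}(A^{1/2})$. Note first that $\widetilde{T - \lambda I} = \widetilde{T} - \lambda I$ by the linearity part of Lemma \ref{lemma1-2}(i), so it suffices to analyze $\|\widetilde{T}Ax\|_{\mathbf{R}(A^{1/2})}$ versus $\|Tx\|_A$ for $x \in \mathcal{H}$: indeed $\|\widetilde{T}(Ax)\|_{\mathbf{R}(A^{1/2})} = \|Z_A T x\|_{\mathbf{R}(A^{1/2})} = \|ATx\|_{\mathbf{R}(A^{1/2})} = \|Tx\|_A$. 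The vectors of the form $Ax$, $x \in \overline{R(A)}$, are exactly $R(A)$, which is dense in the Hilbert space $\mathbf{R}(A^{1/2})$, and $\|Ax\|_{\mathbf{R}(A^{1/2})} = \|x\|_A$, so a unit vector $x \in \overline{R(A)}$ with $\|x\|_A = 1$ corresponds to a unit vector $Ax \in R(A) \subseteq \mathbf{R}(A^{1/2})$.

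For part (iii), the containment $\sigma_{A_{app}}(T) \subseteq \sigma_{app}(\widetilde{T})$ is immediate: given $\lambda \in \sigma_{A_{app}}(T)$, pick $\{x_n\} \subseteq \overline{R(A)}$ with $\|x_n\|_A = 1$ and $\|(T - \lambda I)x_n\|_A \to 0$; then $y_n := Ax_n \in \mathbf{R}(A^{1/2})$ satisfies $\|y_n\|_{\mathbf{R}(A^{1/2})} = \|x_n\|_A = 1$ and $\|(\widetilde{T} - \lambda I)y_n\|_{\mathbf{R}(A^{1/2})} = \|Z_A(T - \lambda I)x_n\|_{\mathbf{R}(A^{1/2})} = \|(T-\lambda I)x_n\|_A \to 0$, so $\lambda \in \sigma_{app}(\widetilde{T})$. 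For the reverse inclusion when $R(A)$ is closed: given $\lambda \in \sigma_{app}(\widetilde{T})$, take unit vectors $z_n \in \mathbf{R}(A^{1/2})$ with $\|(\widetilde{T} - \lambda I)z_n\| \to 0$. When $R(A)$ is closed, $\mathbf{R}(A^{1/2}) = R(A^{1/2}) = R(A)$ as sets and $R(A) = \{Ax : x \in \overline{R(A)}\}$ (the restriction $A|_{\overline{R(A)}}$ is a bijection onto $R(A)$), so write $z_n = Ax_n$ with $x_n \in \overline{R(A)}$; then $\|x_n\|_A = \|z_n\|_{\mathbf{R}(A^{1/2})} = 1$ and $\|(T - \lambda I)x_n\|_A = \|Z_A(T-\lambda I)x_n\|_{\mathbf{R}(A^{1/2})} = \|(\widetilde{T}-\lambda I)z_n\| \to 0$, giving $\lambda \in \sigma_{A_{app}}(T)$.

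For parts (i) and (ii), the argument is similar but with single eigenvectors. Part (ii): if $\lambda \in \sigma_{A_p}(T)$, choose $x \neq 0$ in $\overline{R(A)}$ with $A^{1/2}Tx = \lambda A^{1/2}x$ (equivalently $ATx = \lambda Ax$); then $Ax \neq 0$ (since $x \in \overline{R(A)} = N(A)^\perp$ and $x \neq 0$) and $\widetilde{T}(Ax) = Z_A T x = ATx = \lambda Ax$, so $Ax$ is an eigenvector of $\widetilde{T}$ with eigenvalue $\lambda$, i.e.\ $\lambda \in \sigma_p(\widetilde{T})$. When $R(A)$ is closed, reverse the argument using $R(A) = \{Ax : x \in \overline{R(A)}\}$ to pull an eigenvector of $\widetilde{T}$ back to one witnessing $\lambda \in \sigma_{A_p}(T)$. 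For part (i), $\sigma_{A_p}(T) \subseteq \sigma_A(T)$: if $\lambda \notin \sigma_A(T)$ then $T - \lambda I$ is $A$-invertible, so there is $S \in \mathcal{B}_{A^{1/2}}(\mathcal{H})$ with $A S (T - \lambda I) = A$; applying this to an $x \in \overline{R(A)}$ with $A^{1/2}(T-\lambda I)x = 0$ (hence $A(T - \lambda I)x = 0$) yields $Ax = AS A (T-\lambda I) x = 0$... more carefully, from $A(T-\lambda I)x = 0$ and $AS(T-\lambda I) = A$ we want $Ax = 0$; here one uses that $AS(T-\lambda I)x$ depends on $(T-\lambda I)x$ only through $A(T-\lambda I)x$ because $S(N(A)) \subseteq N(A)$ for $S \in \mathcal{B}_{A^{1/2}}(\mathcal{H})$, so $A S (T-\lambda I) x = 0$ and thus $Ax = 0$, forcing $x \in N(A) \cap \overline{R(A)} = \{0\}$, a contradiction.

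The main obstacle I anticipate is the bookkeeping in parts (ii) and (iii) around the closedness hypothesis: one must be careful that when $R(A)$ is \emph{not} closed, $\mathbf{R}(A^{1/2})$ contains vectors not of the form $Ax$, so a sequence (or eigenvector) in $\mathbf{R}(A^{1/2})$ cannot in general be pulled back exactly, which is precisely why only the inclusion (not equality) holds in that case; and when $R(A)$ \emph{is} closed one must invoke the last sentence of Lemma \ref{lemma1-2}(ii), or equivalently the fact that $A$ restricted to $\overline{R(A)}$ maps onto $R(A) = R(A^{1/2}) = \mathbf{R}(A^{1/2})$, to perform the pullback. The computation $\|(\widetilde{T} - \lambda I)(Ax)\|_{\mathbf{R}(A^{1/2})} = \|(T - \lambda I)x\|_A$ is the engine throughout and should be isolated once at the start.
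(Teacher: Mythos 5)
Your proof is correct and follows essentially the same route as the paper's: part (i) via the $A$-inverse relation $AS(T-\lambda I)=A$ combined with the invariance $S(N(A))\subseteq N(A)$, and parts (ii)--(iii) via the identity $\|(\widetilde{T}-\lambda\widetilde{I})Ax\|_{\textbf{R}(A^{1/2})}=\|(T-\lambda I)x\|_A$ together with the pullback $\textbf{R}(A^{1/2})=R(A^{1/2})=R(A)$ when $R(A)$ is closed. The brief slip $Ax=ASA(T-\lambda I)x$ in part (i) is immediately corrected by your own "more carefully" argument, which is exactly the paper's.
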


\begin{proof}
(i) 	Let $\lambda \in \sigma_{A_p}(T).$ Then there exists $x(\neq 0)\in \overline{R(A)}$ such that $ATx=\lambda Ax,$ i.e., $(T-\lambda I)x \in N(A).$ If possible let $\lambda \notin \sigma_{A}(T).$ Therefore, there exists $S \in \mathcal{B}_{{A}^{1/2}}(\mathcal{H})$ such that $A(T-\lambda I)S=AS(T-\lambda I)=A,$ this implies that $A(T-\lambda I)Sx=AS(T-\lambda I)x=Ax.$ As $N(A)$ is invariant under $S$ so $AS(T-\lambda I)x=0.$ Thus, we obtain $Ax=0,$ i.e., $x\in N(A)$ which contradicts the fact that $x(\neq 0)\in \overline{R(A)}.$\\
(ii)  Let $\lambda \in \sigma_{A_p}(T).$ Then from the definition there exists $x(\neq 0) \in \overline{R(A)}$ such that 
$ATx=\lambda Ax,$ this implies that $\widetilde{T}Ax=\lambda Ax$ and thus $\lambda \in \sigma_p(\widetilde{T})$.

If $R(A)$ is closed then $R(A^{1/2})=R(A).$ Thus if $\lambda \in \sigma_p(\widetilde{T})$ then there exits $x(\neq 0) \in {R(A)}$ such that $\widetilde{T}Ax=\lambda Ax.$ This implies that $ATx=\lambda Ax,$ i.e., $\lambda \in \sigma_{A_p}(T).$ \\
(iii)  Suppose that $\lambda \in \sigma_{A_{app}}(T).$ Then there exists $\{x_n\} \subseteq \overline{R(A)}$ with  $\|x_n\|_A=1$ such that $\|(T-\lambda I)x_n\|_A \to 0.$ Therefore, we have $\|Ax_n\|_{\textbf{R}(A^{1/2})}=1$ and $\|A(T-\lambda I)x_n\|_{\textbf{R}(A^{1/2})} \to 0.$ Now, by applying Lemma \ref{lemma1-2} (i) we get $\|(\widetilde{T}-\lambda \widetilde{I})Ax_n\|_{\textbf{R}(A^{1/2})} \to 0.$ Thus we have $\lambda \in \sigma_{app}(\widetilde{T}).$

Now, if $R(A)$ is closed and $\lambda \in \sigma_{app}(\widetilde{T})$ then there exists $\{x_n\} \subseteq R(A)$ with $\|Ax_n\|_{\textbf{R}(A^{1/2})}=1$ such that $\|(\widetilde{T}-\lambda \widetilde{I})Ax_n\|_{\textbf{R}(A^{1/2})} \to 0.$ Therefore, Lemma \ref{lemma1-2} (i) implies that $\|A(T-\lambda I)x_n\|_{\textbf{R}(A^{1/2})} \to 0,$ and thus $\|(T-\lambda I)x_n\|_A \to 0$ with $\|x_n\|_A=1.$ Hence, $\lambda \in \sigma_{A_{app}}(T)$ and this completes the proof of (iii).
\end{proof}

Next we give an example to show that the closedness of $R(A)$ is not necessary in Proposition \ref{P3}.

\begin{example}\label{ex1}
	Let $\{e_n\}$ denotes the standard orthonormal basis of the Hilbert space $\ell_2.$
	Let $A : \ell_2 \to \ell_2$ be the bounded linear operator defined as 
	$$Ae_n= \frac{1}{n}e_n, \,\forall n \in \mathbb{N}.$$
 Then $A$ is a positive operator. 
	Let $\{\lambda_n = e^{i \theta_n}\}$ be a sequence of scalars with $\theta_n= \pi/4+\pi/{8n}$ for all $ n \in \mathbb{N}.$ Then $\lambda_n \to \lambda_0.$  Consider the bounded linear operator $ T : \ell_2 \to \ell_2$ defined as 
	$$Te_n=\lambda_n e_n, \,\forall n \in \mathbb{N}.$$
 Then it is easy to verify that % $T$ is $A$-normal
	%	Since, each $e_n \in \overline{R(A)}$ so we get $$T^{\sharp_A}e_n=A^{\dagger}T^*Ae_n=A^{\dagger}AT^*e_n=P_{\overline{R(A)}}T^*e_n=\overline{\lambda_n}e_n.$$ Clearly, from the above relations it easily follows that $T$ is $A$-normal.
	 $\sigma_{A_p}(T)=\{\lambda_n : n \in \mathbb{N}\}.$ Again 
	$$\|(T-\lambda_0I)\sqrt{n}e_n\|_A=|\lambda_n-\lambda_0|\to 0,$$ as $n \to \infty$ so $\lambda_0 \in \sigma_{A_{app}}(T).$ Suppose that $\mu \notin \{\lambda_n : n \in \mathbb{N}\cup\{0\}\}.$ Then there exists $\delta>0$ such that $|\mu-\lambda_n|\geq \delta,\, \forall n \in \mathbb{N}\cup\{0\}.$ Let $\{\widetilde{x_i}=\sum_{n=1}^{\infty}x_n^{(i)}e_n\}$ be a sequence in $\ell^2$ with $\|\widetilde{x_i}\|_A=\left(\sum_{n=1}^{\infty}|x_n^{(i)}|^2\|e_n\|^2_A\right)^{1/2}=1.$ Thus, we obtain
	$$\|(T-\mu I)\widetilde{x_i}\|^2_A=\sum_{n=1}^{\infty}|\lambda_n-\mu|^2|x_n^{(i)}|^2\|e_n\|^2_A\geq \delta^2>0.$$
	As $\{\widetilde{x_i}\}$ is an arbitrary sequence in $\ell_2$ hence $\mu \notin \sigma_{A_{app}}(T).$ Therefore, $\sigma_{A_{app}}(T)=\{\lambda_n : n \in \mathbb{N}\cup\{0\}\}.$ Observe that  $\sigma_A(T)$ is a compact subset of $\mathbb{C}$ with countable boundary as because $\partial\sigma_A(T)\subseteq \sigma_{A_{app}}(T).$  Hence $\sigma_A(T)=\partial\sigma_A(T)=\{\lambda_n : n \in \mathbb{N}\cup\{0\}\}.$ Now, by applying Lemma \ref{lemma1-2} (i), we get $\widetilde{T} \in \mathcal{B}(\textbf{R}(A^{1/2}))$ as 
	$$\widetilde{T}e_n= \lambda_ne_n, \,\forall n \in \mathbb{N}.$$  Similarly, we obtain that $\sigma_p(\widetilde{T} )=\{\lambda_n : n \in \mathbb{N}\}$ and $\sigma(\widetilde{T} )=\sigma_{app}(\widetilde{T} )=\{\lambda_n : n \in \mathbb{N}\cup\{0\}\}.$ Thus we get $\sigma_{A_{p}}(T)=\sigma_p(\widetilde{T})$ and $\sigma_{A_{app}}(T)=\sigma_{app}(\widetilde{T}),$ though  $R(A)$ is not closed.  
\end{example}

\begin{proposition}\label{Prop1}
	Let $T \in \mathcal{B}_{{A}^{1/2}}(\mathcal{H}).$ Then  $\sigma_{A_p}(T) \subseteq W_A(T)$ and $\sigma_{A_{app}}(T) \subseteq \overline{W_A(T)}.$ 
\end{proposition}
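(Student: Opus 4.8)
The plan is to prove the two containments separately, each being the $A$-version of a classical fact. For the first, let $\lambda \in \sigma_{A_p}(T)$. By definition there is a unit vector $x \in \overline{R(A)}$ with $\|x\|_A = 1$ and $A^{1/2}Tx = \lambda A^{1/2}x$, equivalently $ATx = \lambda Ax$. I would compute $\langle Tx, x\rangle_A = \langle ATx, x\rangle = \langle \lambda Ax, x\rangle = \lambda \|x\|_A^2 = \lambda$, so $\lambda \in W_A(T)$. The only subtlety is making sure $x$ can be taken with $\|x\|_A = 1$; since $x \in \overline{R(A)}$ and $x \neq 0$ in the quotient sense (i.e.\ $x \notin N(A)$, because $x \in \overline{R(A)}$ and $x \neq 0$ forces $\|x\|_A \neq 0$ as $N(A) \cap \overline{R(A)} = \{0\}$), we may normalize so that $\|x\|_A = 1$.

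For the second containment, let $\lambda \in \sigma_{A_{app}}(T)$. Then there is a sequence $\{x_n\} \subseteq \overline{R(A)}$ with $\|x_n\|_A = 1$ and $\|(T - \lambda I)x_n\|_A \to 0$. The idea is to estimate
\[
|\langle Tx_n, x_n\rangle_A - \lambda| = |\langle (T-\lambda I)x_n, x_n\rangle_A| \leq \|(T-\lambda I)x_n\|_A \,\|x_n\|_A = \|(T-\lambda I)x_n\|_A \to 0,
\]
using the Cauchy--Schwarz inequality for the semi-inner product $\langle \cdot, \cdot\rangle_A$. Since $\langle Tx_n, x_n\rangle_A \in W_A(T)$ for each $n$ and this sequence converges to $\lambda$, we conclude $\lambda \in \overline{W_A(T)}$.

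Alternatively — and this is perhaps the cleaner route given the machinery already set up — one can deduce both statements from Proposition \ref{P3} together with Lemma \ref{lemma1-2}(i) and Lemma \ref{lemma3-6}(iv). Indeed, $\sigma_{A_p}(T) \subseteq \sigma_p(\widetilde{T}) \subseteq W(\widetilde{T})$ and $\sigma_{A_{app}}(T) \subseteq \sigma_{app}(\widetilde{T}) \subseteq \overline{W(\widetilde{T})}$ by the corresponding classical facts for operators on the Hilbert space $\mathbf{R}(A^{1/2})$, and then Lemma \ref{lemma3-6}(iv) gives $W(\widetilde{T}) \subseteq \overline{W_A(T)}$, hence $\overline{W(\widetilde{T})} \subseteq \overline{W_A(T)}$ as well. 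This yields $\sigma_{A_p}(T) \subseteq \overline{W_A(T)}$ and $\sigma_{A_{app}}(T) \subseteq \overline{W_A(T)}$. However, this only recovers $\sigma_{A_p}(T) \subseteq \overline{W_A(T)}$, whereas the proposition asserts the sharper $\sigma_{A_p}(T) \subseteq W_A(T)$ without closure; so for the point spectrum inclusion the direct eigenvector computation in the first paragraph is essential, and the reduction argument is only a convenient way to handle the approximate point spectrum part.

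I expect no serious obstacle here; the main point to be careful about is the normalization issue for the point spectrum (ensuring the eigenvector lies outside $N(A)$ so that $\|x\|_A \neq 0$) and the use of the Cauchy--Schwarz inequality for a genuine semi-inner product rather than an inner product, which is valid since $\langle \cdot, \cdot\rangle_A$ is positive semidefinite. I would present the proof via the two direct computations, as they are short, self-contained, and give the stronger (non-closure) statement for $\sigma_{A_p}$.
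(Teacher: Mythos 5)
Your proposal is correct and follows essentially the same route as the paper: the direct eigenvector computation $\lambda=\langle Tx,x\rangle_A$ for the point spectrum and the Cauchy--Schwarz estimate $|\langle Tx_n,x_n\rangle_A-\lambda|\leq\|(T-\lambda I)x_n\|_A\to 0$ for the approximate point spectrum. Your extra remarks on normalizing the eigenvector (valid since $N(A)\cap\overline{R(A)}=\{0\}$ for positive $A$) and on why the reduction to $\widetilde{T}$ cannot recover the non-closure inclusion are accurate but not needed beyond what the paper does.
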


\begin{proof}
	First we consider any $\lambda \in \sigma_{A_p}(T).$ Then there exists $x \in \overline{R(A)}$ with $\|x\|_A=1$ such that $A^{1/2}Tx=\lambda A^{1/2}x.$ Thus we have $\lambda = \langle Tx,x\rangle_A \in W_A(T),$ as desired.
	
	To prove the second assertion, let $\lambda \in \sigma_{A_{app}}(T).$ Then there exists $\{x_n\} \subseteq \overline{R(A)}$ with  $\|x_n\|_A=1$ such that $\|(T-\lambda I)x_n\|_A \to 0$ as $n \to \infty.$ Therefore, we have 
	\begin{align*}
		|\langle Tx_n,x_n \rangle_A- \lambda| =|\langle (T-\lambda I)x_n,x_n \rangle_A| \leq \|(T-\lambda I)x_n\|_A \to 0.
	\end{align*}
This implies that $\lambda \in \overline{W_A(T)},$ as desired.
\end{proof}

In the next theorem we establish a relation between the boundary of $A$-spectrum and $A$-approximate spectrum of $A$-bounded operators. To prove this result we need the following lemmas which are proved in \cite{baklouti_BJMA_2022}.

\begin{lemma}\label{LL1}
		Let $T \in \mathcal{B}_{A^{1/2}}(\mathcal{H}).$ Then the following conditions are equivalent:\\
		(i)~~$TP_{\overline{R(A)}}$ is $A$-invertible in $\mathcal{B}_{A^{1/2}}(\mathcal{H}).$\\
		(ii)~~$P_{\overline{R(A)}}T$ is $A$-invertible in $\mathcal{B}_{A^{1/2}}(\mathcal{H}).$\\
		(iii)~~$T$ is $A$-invertible in $\mathcal{B}_{A^{1/2}}(\mathcal{H}).$
\end{lemma}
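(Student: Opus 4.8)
The plan is to prove the three equivalences by a direct, purely formal computation resting on two elementary absorption identities: $AP_{\overline{R(A)}}=A$ and, for every $M\in\mathcal{B}_{A^{1/2}}(\mathcal{H})$, $AMP_{\overline{R(A)}}=AM$. The first is recorded in the preliminaries. For the second I would use the orthogonal decomposition $\mathcal{H}=\overline{R(A)}\oplus N(A)$ together with the fact (noted in the excerpt) that $M(N(A))\subseteq N(A)$ for $A$-bounded $M$: writing $x=u+v$ with $u\in\overline{R(A)}$ and $v\in N(A)$, one has $AMP_{\overline{R(A)}}x=AMu$, while $AMx=AMu+AMv=AMu$ since $Mv\in N(A)$. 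I will also use that $\mathcal{B}_{A^{1/2}}(\mathcal{H})$ is closed under products.

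First I would establish (iii)$\Rightarrow$(i) and (iii)$\Rightarrow$(ii). Let $S\in\mathcal{B}_{A^{1/2}}(\mathcal{H})$ be an $A$-inverse of $T$, so $ATS=AST=A$. Then $A(TP_{\overline{R(A)}})S=(ATP_{\overline{R(A)}})S=(AT)S=A$ and $AS(TP_{\overline{R(A)}})=(AST)P_{\overline{R(A)}}=AP_{\overline{R(A)}}=A$, so $S$ is an $A$-inverse of $TP_{\overline{R(A)}}$; likewise $A(P_{\overline{R(A)}}T)S=(AP_{\overline{R(A)}})TS=A$ and $AS(P_{\overline{R(A)}}T)=(ASP_{\overline{R(A)}})T=(AS)T=A$, so $S$ is an $A$-inverse of $P_{\overline{R(A)}}T$.

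For the reverse implications I would argue (i)$\Rightarrow$(iii) and (ii)$\Rightarrow$(iii) symmetrically. If $S\in\mathcal{B}_{A^{1/2}}(\mathcal{H})$ is an $A$-inverse of $TP_{\overline{R(A)}}$, then from $A(TP_{\overline{R(A)}})S=A$ and $ATP_{\overline{R(A)}}=AT$ we get $ATS=A$; and from $AS(TP_{\overline{R(A)}})=A$, i.e. $A(ST)P_{\overline{R(A)}}=A$, together with $A(ST)P_{\overline{R(A)}}=AST$ (applying the second identity to $M=ST\in\mathcal{B}_{A^{1/2}}(\mathcal{H})$) we get $AST=A$; hence $S$ is an $A$-inverse of $T$. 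The implication (ii)$\Rightarrow$(iii) is the mirror image: from an $A$-inverse $S$ of $P_{\overline{R(A)}}T$ one removes $P_{\overline{R(A)}}$ on the left via $AP_{\overline{R(A)}}=A$ to obtain $ATS=A$, and on the right via $ASP_{\overline{R(A)}}=AS$ to obtain $AST=A$.

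There is no serious obstacle here; the argument is entirely formal once the two absorption identities are available. The only point requiring care is that one must repeatedly invoke $N(A)$-invariance of $A$-bounded operators to cancel a projection appearing on the right of a product, and must check that the auxiliary operators ($ST$, etc.) to which this is applied are themselves $A$-bounded, which holds because $\mathcal{B}_{A^{1/2}}(\mathcal{H})$ is an algebra. Alternatively one could pass to $\textbf{R}(A^{1/2})$ through Lemma \ref{lemma1-2}: since $Z_AP_{\overline{R(A)}}=Z_A$ forces $\widetilde{P_{\overline{R(A)}}}=I_{\textbf{R}(A^{1/2})}$, the operators $T$, $TP_{\overline{R(A)}}$ and $P_{\overline{R(A)}}T$ all induce the same $\widetilde{T}$, so one would only need that $A$-invertibility corresponds to invertibility of $\widetilde{T}$; but lifting $\widetilde{T}^{-1}$ back into $\mathcal{B}_{A^{1/2}}(\mathcal{H})$ is delicate when $R(A)$ is not closed, so the direct computation above is preferable.
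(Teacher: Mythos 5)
Your proof is correct, and there is nothing to compare it against inside the paper itself: the authors state Lemma \ref{LL1} as a quoted result, attributing its proof to the reference \cite{baklouti_BJMA_2022}, and give no argument of their own. Your direct computation is a valid, self-contained substitute. The two absorption identities you rely on are both legitimate in this setting: $AP_{\overline{R(A)}}=A$ (and $P_{\overline{R(A)}}A=A$) follows from self-adjointness of $A$ and is recorded in the preliminaries, while $AMP_{\overline{R(A)}}=AM$ for $M\in\mathcal{B}_{A^{1/2}}(\mathcal{H})$ follows exactly as you say from $\mathcal{H}=\overline{R(A)}\oplus N(A)$ and the invariance $M(N(A))\subseteq N(A)$ of $A$-bounded operators, which the paper also records; and $\mathcal{B}_{A^{1/2}}(\mathcal{H})$ is indeed closed under products, so applying the identity to $M=ST$ is justified. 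With these in hand, each of your four implications is a one-line cancellation, and since the same $S$ serves as the $A$-inverse throughout, the membership requirement $S\in\mathcal{B}_{A^{1/2}}(\mathcal{H})$ in the definition of $A$-invertibility is automatically met. Your closing remark is also well judged: the route through $Z_A$ and $\widetilde{T}$ (using $\widetilde{P_{\overline{R(A)}}}=\widetilde{I}$, so that $T$, $TP_{\overline{R(A)}}$ and $P_{\overline{R(A)}}T$ all induce the same operator on $\textbf{R}(A^{1/2})$) would reduce the lemma to the equivalence of $A$-invertibility with invertibility of $\widetilde{T}$, but that equivalence is only guaranteed in the paper when $R(A)$ is closed (Theorem \ref{P2} gives merely $\sigma(\widetilde{T})\subseteq\sigma_A(T)$ in general), so your elementary computation is the safer and more general argument.
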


\begin{lemma}\label{lem31}
	Let $T_1,T_2\in \mathcal{B}_{A^{1/2}}(\mathcal{H})$ be such that $T_1T_2=T_2T_1.$ Then $T_1T_2$ is $A$-invertible in $\mathcal{B}_{A^{1/2}}(\mathcal{H})$ if and only if $T_1$ and $T_2$ are $A$-invertible in $\mathcal{B}_{A^{1/2}}(\mathcal{H}).$
\end{lemma}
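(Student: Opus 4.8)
The plan is to transport everything to the Hilbert space $\textbf{R}(A^{1/2})$ via the correspondence $T\mapsto\widetilde T$ of Lemma \ref{lemma1-2}, under which $A$-invertibility becomes ordinary invertibility, and then to use the elementary algebraic fact that if two commuting elements of a unital ring have an invertible product, then each of them is invertible. The first thing I would record is a translation principle: for $R\in\mathcal{B}_{A^{1/2}}(\mathcal{H})$ one has $AR=A$ if and only if $\widetilde R=I$. Indeed, $AR=A$ says $Z_ARx=Z_Ax$ for all $x$, that is $\widetilde R(Ax)=Ax$ for all $x$, i.e. $\widetilde R$ fixes every element of $R(A)$; since $R(A)$ is dense in $\textbf{R}(A^{1/2})$ and $\widetilde R$ is bounded, this is the same as $\widetilde R=I$. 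Two consequences will be used throughout: (a) if $T$ is $A$-invertible with $A$-inverse $S$, then applying the translation principle to $ATS=AST=A$ together with $\widetilde{TS}=\widetilde T\,\widetilde S$ gives $\widetilde T\,\widetilde S=\widetilde S\,\widetilde T=I$, so $\widetilde T$ is invertible in $\mathcal{B}(\textbf{R}(A^{1/2}))$ with $\widetilde T^{\,-1}=\widetilde S$; and (b) $\mathcal{B}_{A^{1/2}}(\mathcal{H})$ is closed under composition (immediate from the defining semi-norm inequality), so every operator product appearing below again belongs to $\mathcal{B}_{A^{1/2}}(\mathcal{H})$.

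For the ``if'' direction, suppose $T_1,T_2$ are $A$-invertible, with $A$-inverses $S_1,S_2$. By (a), $\widetilde{T_1}$ and $\widetilde{T_2}$ are invertible, hence so is $\widetilde{T_1T_2}=\widetilde{T_1}\widetilde{T_2}$. I would then verify directly that $S_2S_1\in\mathcal{B}_{A^{1/2}}(\mathcal{H})$ is an $A$-inverse of $T_1T_2$: using $\widetilde{T_i}\widetilde{S_i}=\widetilde{S_i}\widetilde{T_i}=I$ for $i=1,2$, one gets $\widetilde{(T_1T_2)(S_2S_1)}=\widetilde{T_1}\widetilde{T_2}\widetilde{S_2}\widetilde{S_1}=I$ and, likewise, $\widetilde{(S_2S_1)(T_1T_2)}=I$, so the translation principle yields $A(T_1T_2)(S_2S_1)=A(S_2S_1)(T_1T_2)=A$. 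This implication uses neither the commutativity hypothesis nor any finer structure of $\textbf{R}(A^{1/2})$.

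For the ``only if'' direction, suppose $T_1T_2$ is $A$-invertible with $A$-inverse $S$ and that $T_1T_2=T_2T_1$. By (a), $\widetilde{T_1}\widetilde{T_2}=\widetilde{T_1T_2}$ is invertible with inverse $\widetilde S$, and the commutativity gives $\widetilde{T_1}\widetilde{T_2}=\widetilde{T_2T_1}=\widetilde{T_2}\widetilde{T_1}$. Writing $c=\widetilde S$, from $\widetilde{T_1}\widetilde{T_2}c=c\widetilde{T_1}\widetilde{T_2}=I$ and $\widetilde{T_1}\widetilde{T_2}=\widetilde{T_2}\widetilde{T_1}$ one reads off that $c\widetilde{T_2}$ is a left inverse and $\widetilde{T_2}c$ a right inverse of $\widetilde{T_1}$ (so they coincide and $\widetilde{T_1}$ is invertible), and symmetrically $\widetilde{T_2}$ is invertible; in particular $\widetilde S=(\widetilde{T_1}\widetilde{T_2})^{-1}=\widetilde{T_2}^{\,-1}\widetilde{T_1}^{\,-1}$. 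Now I claim $T_2S\in\mathcal{B}_{A^{1/2}}(\mathcal{H})$ is an $A$-inverse of $T_1$: one has $AT_1(T_2S)=A(T_1T_2)S=A$ outright, while $\widetilde{(T_2S)T_1}=\widetilde{T_2}\widetilde S\widetilde{T_1}=\widetilde{T_2}\widetilde{T_2}^{\,-1}\widetilde{T_1}^{\,-1}\widetilde{T_1}=I$, so $A(T_2S)T_1=A$ by the translation principle; hence $T_1$ is $A$-invertible. Symmetrically, $ST_1$ is an $A$-inverse of $T_2$, since $A(ST_1)T_2=AS(T_1T_2)=A$ and $\widetilde{T_2(ST_1)}=\widetilde{T_2}\widetilde S\widetilde{T_1}=I$.

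The step I expect to be the real obstacle is consequence (a) and, more precisely, the fact that one cannot run it in reverse for free: while $A$-invertibility of $T$ forces $\widetilde T$ to be invertible, an invertible $\widetilde T$ lifts to an $A$-invertible $T$ only under a surjectivity-modulo-$N(A)$ condition such as $R(T)+N(A)=\mathcal{H}$, which need not hold when $R(A)$ is not closed. This asymmetry is exactly why the commutativity hypothesis cannot be discarded in the ``only if'' part; the route above sidesteps any lifting problem by producing the explicit $A$-inverses $T_2S$ and $ST_1$ from the given $S$, so the only external input beyond Lemma \ref{lemma1-2} is the one-line fact about commuting products in a unital ring.
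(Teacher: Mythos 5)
Your proof is correct. Note that the paper itself does not prove this lemma at all: it is quoted verbatim from \cite{baklouti_BJMA_2022}, so there is no in-paper argument to match against; what you have supplied is a self-contained proof in the spirit of the surrounding section. Your three ingredients all check out: the ``translation principle'' $AR=A \iff \widetilde R=\widetilde I$ (density of $R(A)$ in $\textbf{R}(A^{1/2})$ plus boundedness of $\widetilde R$), the multiplicativity $\widetilde{TS}=\widetilde T\widetilde S$ from Lemma \ref{lemma1-2}(i), and closure of $\mathcal{B}_{A^{1/2}}(\mathcal{H})$ under products. The decisive and genuinely good move is in the ``only if'' direction: rather than trying to lift $\widetilde{T_1}^{-1}$ back to $\mathcal{H}$ (which via Lemma \ref{lemma1-2}(ii) would need a range condition or closedness of $R(A)$), you exhibit the $A$-inverses $T_2S$ and $ST_1$ explicitly inside $\mathcal{B}_{A^{1/2}}(\mathcal{H})$ and verify the four identities $AT_1(T_2S)=A(T_2S)T_1=A$ and $A(ST_1)T_2=AT_2(ST_1)=A$, half of them directly and half through the translation principle; this keeps the lemma valid without any closedness hypothesis on $R(A)$, which matters because Theorem \ref{Th1} invokes it in full generality. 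One small quibble with your closing commentary: the reason commutativity cannot be dropped is not really the lifting asymmetry you describe, but the same purely algebraic phenomenon as in $\mathcal{B}(\mathcal{H})$ itself (e.g.\ for the unilateral shift $S$ one has $S^*S=I$ invertible while neither factor is), so that remark should be softened; it does not affect the validity of the proof.
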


\begin{lemma}\label{LL2}
	Let $T \in \mathcal{B}_{A^{1/2}}(\mathcal{H}).$ Then for all $\lambda >\|T\|_A,$ the operator $(\lambda I-T)$ is $A$-invertible in $\mathcal{B}_{A^{1/2}}(\mathcal{H}).$
\end{lemma}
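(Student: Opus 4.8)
The plan is to reduce the claim to the elementary estimate relating the $A$-operator seminorm to the $A$-spectral radius, using the identification $T\mapsto\widetilde T$ of Lemma \ref{lemma1-2} together with Lemma \ref{lemma3-6}(ii). First I would record that $\|\cdot\|_A$ is submultiplicative on $\mathcal{B}_{A^{1/2}}(\mathcal{H})$: since $\widetilde{T^n}=(\widetilde T)^n$ by Lemma \ref{lemma1-2}(i) and $\|T^n\|_A=\|\widetilde{T^n}\|_{\mathcal B(\textbf{R}(A^{1/2}))}$ by Lemma \ref{lemma3-6}(ii), one gets $\|T^n\|_A=\|(\widetilde T)^n\|_{\mathcal B(\textbf{R}(A^{1/2}))}\le\|\widetilde T\|_{\mathcal B(\textbf{R}(A^{1/2}))}^{\,n}=\|T\|_A^{\,n}$ for every $n$. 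Combining this with the formula \eqref{r1} for the $A$-spectral radius yields $r_A(T)=\lim_{n\to\infty}\|T^n\|_A^{1/n}\le\|T\|_A$.

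It then remains only to invoke \eqref{r2}, namely $r_A(T)=\sup\{|\mu|:\mu\in\sigma_A(T)\}$: if $\lambda>\|T\|_A\ge r_A(T)$ then $\lambda$ exceeds the modulus of every point of $\sigma_A(T)$, so $\lambda\notin\sigma_A(T)$, which is precisely the assertion that $\lambda I-T$ is $A$-invertible in $\mathcal{B}_{A^{1/2}}(\mathcal{H})$. This is the route I would present, since it is short and uses only facts already available in the excerpt.

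For completeness I would also sketch the constructive alternative, which makes the $A$-inverse explicit. Because $\|\widetilde T\|_{\mathcal B(\textbf{R}(A^{1/2}))}=\|T\|_A<\lambda$, the Neumann series $\sum_{n\ge0}(\widetilde T)^n/\lambda^{n+1}$ converges in $\mathcal B(\textbf{R}(A^{1/2}))$ to $(\lambda\widetilde I-\widetilde T)^{-1}$; pulling this back through $Z_A$ and using $\widetilde T Z_A=Z_A T$, the natural candidate for the $A$-inverse of $\lambda I-T$ is the operator associated with the series $\sum_{n\ge0}T^n/\lambda^{n+1}$, and one verifies $A(\lambda I-T)S=AS(\lambda I-T)=A$ by checking the identity on the dense subspace $R(A)\subseteq\textbf{R}(A^{1/2})$ and passing to closures. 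The main obstacle in this second approach is exactly that the partial sums $S_N=\sum_{n=0}^{N}T^n/\lambda^{n+1}$ are Cauchy only with respect to the \emph{seminorm} $\|\cdot\|_A$, so one has to argue — via the isometry $T\mapsto\widetilde T$ and the density of $R(A)$ — that the resulting limit genuinely descends to an element of $\mathcal{B}_{A^{1/2}}(\mathcal{H})$ realizing the $A$-invertibility identities; the spectral-radius argument of the first two paragraphs avoids this difficulty altogether, which is why I would adopt it as the actual proof.
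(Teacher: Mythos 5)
The paper never proves this lemma: it is quoted, together with Lemmas \ref{LL1} and \ref{lem31}, from \cite{baklouti_BJMA_2022}, where the $A$-inverse is obtained by a direct construction. So your main argument is necessarily a different route, and it is valid relative to what the paper treats as known: $T^n\in\mathcal{B}_{A^{1/2}}(\mathcal{H})$, $\|T^n\|_A=\|(\widetilde T)^n\|_{\mathcal{B}(\textbf{R}(A^{1/2}))}\le\|T\|_A^n$ by Lemmas \ref{lemma1-2}(i) and \ref{lemma3-6}(ii), hence $r_A(T)\le\|T\|_A$ by \eqref{r1}, and \eqref{r2} then excludes every $\lambda>\|T\|_A$ from $\sigma_A(T)$, which is exactly $A$-invertibility of $\lambda I-T$. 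The one caveat is logical order: \eqref{r2} is the deeper theorem, and its proof in the literature hinges on knowing that $\sigma_A(T)$ lies in the closed disk of radius $\|T\|_A$ --- essentially the present lemma --- so as a self-contained argument yours is close to circular and works only because the paper (which itself cites rather than proves Lemma \ref{LL2}) offers \eqref{r2} as a black box. The direct construction in \cite{baklouti_BJMA_2022} buys independence from the spectral-radius formula, which is why it is the natural proof to cite.

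Concerning your ``constructive alternative'', the obstacle is more serious than Cauchyness of the partial sums in the seminorm. First, $\sum_{n\ge0}T^n/\lambda^{n+1}$ need not converge in $\mathcal{B}(\mathcal{H})$ at all, since $\|T\|$ is not controlled by $\|T\|_A$. Second, to descend $(\lambda\widetilde I-\widetilde T)^{-1}$ to some $S\in\mathcal{B}_{A^{1/2}}(\mathcal{H})$ with $Z_AS=(\lambda\widetilde I-\widetilde T)^{-1}Z_A$ you need the range condition $R\bigl((\lambda\widetilde I-\widetilde T)^{-1}Z_A\bigr)\subseteq R(A)$ from Lemma \ref{lemma1-2}(ii), which is not automatic when $R(A)$ is not closed: the partial sums $AS_Nx$ lie in $R(A)$, but their limit in $\textbf{R}(A^{1/2})$ (or even in $\mathcal{H}$) may not. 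Since you explicitly adopt the spectral-radius argument as the actual proof and flag the sketch as incomplete, this does not invalidate the proposal, but the sketch should not be presented as a routine verification.
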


\begin{theorem}\label{Th1}
	Let $T \in \mathcal{B}_{A^{1/2}}(\mathcal{H}).$ Then the boundary  of the $A$-spectrum of $T$ is contained in the $A$-approximate spectrum of $T,$ i.e., $\partial\sigma_{A}(T) \subseteq \sigma_{A_{app}}(T).$
\end{theorem}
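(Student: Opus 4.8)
The plan is to adapt the classical Hilbert-space argument that the topological boundary of the spectrum lies in the approximate point spectrum. Let $\lambda_0\in\partial\sigma_A(T)$. Since $\sigma_A(T)$ is closed (this follows from Lemma \ref{LL2} together with standard openness of the $A$-resolvent set, which may be quoted from \cite{baklouti_BJMA_2022}), we have $\lambda_0\in\sigma_A(T)$, and there is a sequence $\lambda_n\in\rho_A(T)$ with $\lambda_n\to\lambda_0$. For each $n$, the operator $T-\lambda_n I$ is $A$-invertible in $\mathcal{B}_{A^{1/2}}(\mathcal{H})$; let $S_n$ denote its $A$-inverse, so $A(T-\lambda_nI)S_n=AS_n(T-\lambda_nI)=A$. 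First I would argue that $\|S_n\|_A\to\infty$: if not, passing to a subsequence we may assume $\|S_n\|_A\le M$ for all $n$, and then for large $n$ the operator $(T-\lambda_nI)-(T-\lambda_0I)=(\lambda_0-\lambda_n)I$ has $A$-seminorm $|\lambda_0-\lambda_n|<1/M$; a Neumann-series / perturbation argument in $\mathcal{B}_{A^{1/2}}(\mathcal{H})$ (using that $\widetilde{\cdot}$ is a homomorphism, Lemma \ref{lemma1-2}(i), and $\|\cdot\|_A=\|\widetilde{\cdot}\|$, Lemma \ref{lemma3-6}(ii)) then shows $T-\lambda_0I$ is $A$-invertible, contradicting $\lambda_0\in\sigma_A(T)$.

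Having $\|S_n\|_A\to\infty$, I would choose, for each $n$, a vector $y_n\in\overline{R(A)}$ with $\|y_n\|_A=1$ and $\|S_ny_n\|_A\ge\frac12\|S_n\|_A$; here one must use the definition of $\|\cdot\|_A$ as a supremum over $\overline{R(A)}$, and note $S_ny_n\in\overline{R(A)}$ since $A$-bounded operators map $\overline{R(A)}$ into $\overline{R(A)}$ (equivalently, work with $P_{\overline{R(A)}}S_ny_n$ and use $\|P_{\overline{R(A)}}z\|_A=\|z\|_A$). Set $z_n=S_ny_n/\|S_ny_n\|_A\in\overline{R(A)}$, so $\|z_n\|_A=1$. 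Then
\begin{align*}
\|(T-\lambda_0I)z_n\|_A&\le\|(T-\lambda_nI)z_n\|_A+|\lambda_n-\lambda_0|\\
&=\frac{\|(T-\lambda_nI)S_ny_n\|_A}{\|S_ny_n\|_A}+|\lambda_n-\lambda_0|.
\end{align*}
The subtle point is that $(T-\lambda_nI)S_n$ need not equal $I$ on all of $\mathcal{H}$, only $A(T-\lambda_nI)S_n=A$, i.e.\ $(T-\lambda_nI)S_n-I$ maps into $N(A)$; hence $\|(T-\lambda_nI)S_ny_n-y_n\|_A=0$, giving $\|(T-\lambda_nI)S_ny_n\|_A=\|y_n\|_A=1$. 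Therefore the first term is $1/\|S_ny_n\|_A\le 2/\|S_n\|_A\to0$, and the second term $\to0$, so $\|(T-\lambda_0I)z_n\|_A\to0$ with $\{z_n\}\subseteq\overline{R(A)}$, $\|z_n\|_A=1$. This is exactly $\lambda_0\in\sigma_{A_{app}}(T)$.

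The step I expect to be the main obstacle is the perturbation/openness argument showing $\|S_n\|_A\to\infty$ — specifically, transporting the classical ``small perturbation of an invertible operator is invertible'' fact to the semi-Hilbertian setting where $A$-invertibility only controls behaviour modulo $N(A)$. The cleanest route is to pass through the induced operators: $\lambda\in\rho_A(T)$ iff $\widetilde{T}-\lambda\widetilde{I}$ is invertible in $\mathcal{B}(\mathbf{R}(A^{1/2}))$ (this equivalence, or at least the needed direction, is in \cite{baklouti_BJMA_2022,feki_BJOM_2022}), combined with $\|S_n\|_A=\|\widetilde{S_n}\|$ and $\widetilde{S_n}=(\widetilde{T}-\lambda_n\widetilde{I})^{-1}$; then the claim $\|S_n\|_A\to\infty$ is the standard Hilbert-space fact that resolvent norms blow up at the boundary of the spectrum, applied to $\widetilde{T}$. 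A secondary technical care-point is keeping all chosen vectors inside $\overline{R(A)}$ so that $\|\cdot\|_A$ genuinely behaves like a norm (no nonzero null vectors), which the projections $P_{\overline{R(A)}}$ handle via the Moore–Penrose identities recorded earlier.
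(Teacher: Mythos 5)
Your second half (normalizing $S_ny_n$ to produce approximate eigenvectors) is essentially sound once one replaces $S_ny_n$ by $P_{\overline{R(A)}}S_ny_n$: note that your parenthetical claim that $A$-bounded operators map $\overline{R(A)}$ into $\overline{R(A)}$ is false in general (they only leave $N(A)$ invariant), but the projection fix you indicate does work, since $T-\lambda_0 I$ maps $N(A)$ into $N(A)$ and $\|P_{\overline{R(A)}}z\|_A=\|z\|_A$. The genuine gap is exactly the step you flagged, the claim $\|S_n\|_A\to\infty$, and neither of your proposed justifications is available here. The equivalence ``$\lambda\in\rho_A(T)$ iff $\widetilde{T}-\lambda\widetilde{I}$ is invertible in $\mathcal{B}(\textbf{R}(A^{1/2}))$'' is not proved in this generality: the paper's Theorem \ref{P2} gives only $\sigma(\widetilde{T})\subseteq\sigma_A(T)$, the reverse inclusion requiring $R(A)$ closed, and Theorem \ref{Th1} carries no closed-range hypothesis. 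Hence invertibility of $\widetilde{T}-\lambda_0\widetilde{I}$ does not contradict $\lambda_0\in\sigma_A(T)$; worse, $\lambda_0\in\partial\sigma_A(T)$ need not even lie in $\sigma(\widetilde{T})$, so the classical ``resolvent norms blow up at the boundary of the spectrum,'' applied to $\widetilde{T}$, does not deliver $\|\widetilde{S_n}\|\to\infty$. A Neumann series run directly in $\mathcal{B}_{A^{1/2}}(\mathcal{H})$ is also unjustified: $\|\cdot\|_A$ is only a seminorm and does not control the operator norm on $\mathcal{H}$, so the partial sums need not converge to any element of $\mathcal{B}(\mathcal{H})$, while summing at the level of the induced operators merely returns you to the pull-back problem of Lemma \ref{lemma1-2}(ii).

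The gap can be closed with the paper's own lemmas, and this is essentially what the paper does (arguing by contradiction rather than directly). From $AS_n(T-\lambda_nI)=A$ one gets
\begin{align*}
P_{\overline{R(A)}}(T-\lambda_0 I)=P_{\overline{R(A)}}\bigl(I-(\lambda_0-\lambda_n)S_n\bigr)(T-\lambda_n I),
\end{align*}
so if $\|S_{n_k}\|_A\le M$ along a subsequence and $|\lambda_0-\lambda_{n_k}|<1/M$, then $\|(\lambda_0-\lambda_{n_k})S_{n_k}\|_A<1$, Lemma \ref{LL2} makes $I-(\lambda_0-\lambda_{n_k})S_{n_k}$ $A$-invertible, and Lemmas \ref{LL1} and \ref{lem31} then force $\lambda_0\in\rho_A(T)$, a contradiction; this proves $\|S_n\|_A\to\infty$ and your construction of the vectors $z_n$ then goes through. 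The paper instead assumes $\lambda_0\in\partial\sigma_A(T)\setminus\sigma_{A_{app}}(T)$, extracts the lower bound $\|(T-\lambda_0I)x\|_A\ge c\|x\|_A$, transports it through $Z_A$ to obtain $\|S_{\mu_n}\|_A=\|\widetilde{S_{\mu_n}}\|\le 2/c$, and then applies the same factorization together with Lemmas \ref{LL1}, \ref{lem31} and \ref{LL2} to contradict $\lambda_0\in\sigma_A(T)$ — thereby never needing the resolvent blow-up, nor any inversion of $\widetilde{T}-\lambda_0\widetilde{I}$ pulled back to $\mathcal{H}$. If you patch your argument as above, your route and the paper's become two versions of the same mechanism.
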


\begin{proof}
 If possible let $\lambda \in \partial\sigma_{A}(T)$  but $\lambda \notin \sigma_{A_{app}}(T).$  There exists a sequence $\{\mu_n\}$ in $\rho_A(T)$ such that $\mu_n \to \lambda$ as $n \to \infty.$   Also there exists $c>0$ such that 
	\begin{align*}
		\|(T-\lambda I)x\|_A\geq c\|x\|_A ~~\mbox{for all $x \in \mathcal{H}.$}
	\end{align*} 
Now, we have
\begin{align}\label{Th1e1}
	P_{\overline{R(A)}}(T-\lambda I)=P_{\overline{R(A)}}\left(I-(\lambda -\mu_n)S_{\mu_n}\right)(T-\mu_n I),
\end{align}
where  for each $n,$ $S_{\mu_n} \in \mathcal{B}_{A^{1/2}}(\mathcal{H})$ is the $A$-inverse of $(T-\mu_n I).$ Since $\sigma_A(T)$ is a closed subset of $\mathbb{C},$ so $\lambda \in \partial\sigma_{A}(T)$ implies that $\lambda \in \sigma_{A}(T).$ To arrive at a contradiction our aim is to show that $\lambda \in \rho_A(T)$. Now, by Lemma \ref{LL1} and Lemma \ref{lem31} from  \eqref{Th1e1} we obtain this is equivalent to show that there exists $n\in \mathbb{N}$ such that $\left(I-(\lambda -\mu_n)S_{\mu_n}\right)$ is $A$-invertible in $\mathcal{B}_{A^{1/2}}(\mathcal{H}).$ As $\mu_n \to \lambda,$ there exists $n_0\in \mathbb{N}$ such that $|\lambda-\mu_n|<c/2$ for all $n \geq n_0.$ Therefore for all $x \in \mathcal{H},$ we get
\begin{align}\label{Th1e2}
	\|(T-\mu_n)x\|_A&=\|(T-\lambda I)x-(\mu_n-\lambda)x\|_A\nonumber\\
	&\geq \left|\|(T-\lambda I)x\|_A-|\mu_n-\lambda|\|x\|_A\right|\nonumber\\
	&\geq\frac{c}{2}\|x\|_A~~\mbox{for all $n\geq n_0.$}
\end{align}
Since, $S_{\mu_n}$ is the $A$-inverse of $(T-\mu_n I)$ then from Lemma \ref{lemma1-2}(i), for all $x \in \mathcal{H}$ we get
\begin{align*}
	& A(T-\mu_n I)S_{\mu_n}x=AS_{\mu_n}(T-\mu_n I)x=Ax\\
	\implies & (\widetilde{T-\mu_n I})AS_{\mu_n}x=\widetilde{S_{\mu_n}}A(T-\mu_n I)x=Ax\\
	\implies &(\widetilde{T-\mu_n I})\widetilde{S_{\mu_n}}Ax=\widetilde{S_{\mu_n}}(\widetilde{T-\mu_n I})Ax=Ax.
\end{align*}
This implies that $(\widetilde{T}-\mu_n \widetilde{I})\widetilde{S_{\mu_n}}=\widetilde{S_{\mu_n}}(\widetilde{T}-\mu_n \widetilde{I})=\widetilde{I},$ as $R(A)$ is dense in $\textbf{R}(A^{1/2}).$ Thus, for each $n,$ $(\widetilde{T}-\mu_n \widetilde{I})$ is invertible with inverse $\widetilde{S_{\mu_n}}.$ Again, from \eqref{Th1e2} for each $n \geq n_0$ and for all $x \in \mathcal{H},$ we obtain 
$$\|A(T-\mu_n I)x\|_{\textbf{R}(A^{1/2})}\geq\frac{c}{2}\|Ax\|_{\textbf{R}(A^{1/2})}$$
and this implies that
\begin{align}\label{Th1e3}
	\|(\widetilde{T-\mu_n I})Ax\|_{\textbf{R}(A^{1/2})}\geq\frac{c}{2}\|Ax\|_{\textbf{R}(A^{1/2})}.
\end{align}
As $R(A)$ is dense in $\textbf{R}(A^{1/2})$ so \eqref{Th1e3} implies that
\begin{align*}
	\|(\widetilde{T-\mu_n I})A^{1/2}x\|_{\textbf{R}(A^{1/2})}\geq\frac{c}{2}\|A^{1/2}x\|_{\textbf{R}(A^{1/2})}~~\mbox{for all $x \in \mathcal{H}$ and $n \geq n_0$}.
\end{align*}
Hence, for all $n \geq n_0$ we have 
\begin{align*}
	\|\widetilde{S_{\mu_n}}\|_{\textbf{R}(A^{1/2})} \leq \frac{2}{c} \implies 	\|S_{\mu_n}\|_A\leq \frac{2}{c} \implies \|(\mu_n-\lambda)S_{\mu_n}\|_A<1.
\end{align*}
Now, it follows from Lemma \ref{LL2} that $\left(I-(\lambda -\mu_{n_0})S_{\mu_{n_0}}\right)$ is $A$-invertible in $\mathcal{B}_{A^{1/2}}(\mathcal{H}).$ This completes the proof.
\end{proof}

The convexity of $A$-numerical range along with  Proposition \ref{Prop1} and Theorem \ref{Th1} yield the following theorem.

\begin{theorem}\label{T11}
		Let $T \in \mathcal{B}_{A^{1/2}}(\mathcal{H}).$ Then $\sigma_A(T) \subseteq \overline{W_A(T)}.$
\end{theorem}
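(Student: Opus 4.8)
The plan is to combine three facts already established: the $A$-spectrum decomposes in a way that its boundary lies in the $A$-approximate point spectrum (Theorem \ref{Th1}), the $A$-approximate point spectrum sits inside $\overline{W_A(T)}$ (Proposition \ref{Prop1}), and $W_A(T)$ (hence its closure) is convex. The strategy is therefore to reduce the inclusion $\sigma_A(T)\subseteq\overline{W_A(T)}$ to an inclusion about the boundary $\partial\sigma_A(T)$, and then invoke convexity to recover the whole set.

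First I would recall that $\sigma_A(T)$ is a nonempty compact subset of $\mathbb{C}$; compactness follows from Lemma \ref{LL2} (it is bounded by $\|T\|_A$) together with the fact, used implicitly in the proof of Theorem \ref{Th1}, that $\sigma_A(T)$ is closed. For a compact set $K\subseteq\mathbb{C}$ one has $K\subseteq conv(\partial K)$: indeed every point of $K$ lies on a chord of $K$ whose endpoints are boundary points, or more directly, $conv(K)=conv(\partial K)$ because the extreme points of the convex hull of a compact set are contained in the set's topological boundary. Applying this, $\sigma_A(T)\subseteq conv(\partial\sigma_A(T))$.

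Next, by Theorem \ref{Th1}, $\partial\sigma_A(T)\subseteq\sigma_{A_{app}}(T)$, and by Proposition \ref{Prop1}, $\sigma_{A_{app}}(T)\subseteq\overline{W_A(T)}$. Chaining these, $\partial\sigma_A(T)\subseteq\overline{W_A(T)}$. Taking convex hulls and using that $W_A(T)$ is convex (\cite[Th. 2.1]{bak}), so $\overline{W_A(T)}$ is convex as well (the closure of a convex set is convex), we get $conv(\partial\sigma_A(T))\subseteq\overline{W_A(T)}$. Combining with the previous paragraph yields $\sigma_A(T)\subseteq conv(\partial\sigma_A(T))\subseteq\overline{W_A(T)}$, which is the claim.

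The only mildly delicate point is the purely convex-geometric step $\sigma_A(T)\subseteq conv(\partial\sigma_A(T))$, which needs $\sigma_A(T)$ to be compact and nonempty; nonemptiness is the one ingredient not explicitly isolated as a lemma in the excerpt, though it is standard for the $A$-spectrum (and follows, e.g., from $r_A(T)=\sup\{|\lambda|:\lambda\in\sigma_A(T)\}$ together with $r_A(T)\geq 0$ being attained, or from the usual Liouville-type argument transported via $\widetilde{T}$). I expect this to be the main obstacle only in the sense of bookkeeping; the rest is a direct concatenation of Theorem \ref{Th1}, Proposition \ref{Prop1}, and convexity of $W_A(T)$.
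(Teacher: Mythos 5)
Your proposal is correct and follows exactly the route the paper intends: the paper gives no written proof, stating only that convexity of $W_A(T)$, Proposition \ref{Prop1}, and Theorem \ref{Th1} yield the result, and your argument via $\sigma_A(T)\subseteq conv(\partial\sigma_A(T))\subseteq conv\left(\sigma_{A_{app}}(T)\right)\subseteq\overline{W_A(T)}$ is precisely the intended chain, with the convex-geometric step correctly justified by compactness of $\sigma_A(T)$ (bounded by Lemma \ref{LL2} or \eqref{r2}, closed as used in the proof of Theorem \ref{Th1}). Your worry about nonemptiness is not even needed, since the inclusion is vacuous when $\sigma_A(T)=\emptyset$.
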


Next, we present a relation between the $A$-spectrum of $T$ and the spectrum of $\widetilde{T}.$  

\begin{theorem}\label{P2}
	Let $T \in \mathcal{B}_{{A}^{1/2}}(\mathcal{H}).$ Then $\sigma(\widetilde{T}) \subseteq \sigma_A(T).$ Furthermore, if $R(A)$ is closed then $\sigma_A(T)=\sigma(\widetilde{T}).$
\end{theorem}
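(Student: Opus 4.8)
The plan is to exploit the functorial correspondence $T \mapsto \widetilde{T}$ from Lemma \ref{lemma1-2}, which is multiplicative and additive, to transfer invertibility statements back and forth between $\mathcal{B}_{A^{1/2}}(\mathcal{H})$ and $\mathcal{B}(\textbf{R}(A^{1/2}))$. For the first inclusion I would argue by contraposition: suppose $\lambda \notin \sigma_A(T)$, i.e. $(T-\lambda I)$ is $A$-invertible, so there is $S \in \mathcal{B}_{A^{1/2}}(\mathcal{H})$ with $A(T-\lambda I)S = AS(T-\lambda I) = A$. Applying $Z_A$ and using $Z_A X = \widetilde{X} Z_A$ together with the product rule $\widetilde{XY} = \widetilde X \widetilde Y$, these identities become $(\widetilde T - \lambda \widetilde I)\widetilde S Z_A = \widetilde S (\widetilde T - \lambda \widetilde I) Z_A = Z_A$ on all of $\mathcal{H}$; since $R(Z_A) = R(A)$ is dense in $\textbf{R}(A^{1/2})$, this forces $(\widetilde T - \lambda \widetilde I)\widetilde S = \widetilde S (\widetilde T - \lambda \widetilde I) = \widetilde I$, so $\lambda \notin \sigma(\widetilde T)$. (This is precisely the computation already carried out inside the proof of Theorem \ref{Th1}, so I would either cite that passage or reproduce the three-line display.) Note $\widetilde I$ is the identity on $\textbf{R}(A^{1/2})$, which is what one wants.

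For the converse direction, assume $R(A)$ is closed and $\lambda \notin \sigma(\widetilde T)$, so $\widetilde T - \lambda \widetilde I$ has a bounded inverse $R \in \mathcal{B}(\textbf{R}(A^{1/2}))$. The goal is to produce $S \in \mathcal{B}_{A^{1/2}}(\mathcal{H})$ with $\widetilde S = R$ and then read off the $A$-invertibility relations. Here I would invoke Lemma \ref{lemma1-2}(ii): since $R(A)$ is closed, every $\widetilde{X} \in \mathcal{B}(\textbf{R}(A^{1/2}))$ is lifted by a unique $X \in \mathcal{B}_{A^{1/2}}(\mathcal{H})$ with $R(X) \subseteq R(A)$ satisfying $Z_A X = \widetilde X Z_A$; apply this to $R$ to get $S$. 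Then from $R(\widetilde T - \lambda \widetilde I) = (\widetilde T - \lambda \widetilde I)R = \widetilde I$ and the fact that lifting is multiplicative and injective (uniqueness in Lemma \ref{lemma1-2}), the operators $S(T-\lambda I)$ and $(T-\lambda I)S$ both lift $\widetilde I$; since $I$ also lifts $\widetilde I$, composing with $Z_A$ gives $Z_A S(T-\lambda I) = Z_A (T-\lambda I) S = Z_A$, i.e. $AS(T-\lambda I) = A(T-\lambda I)S = A$. Hence $(T-\lambda I)$ is $A$-invertible and $\lambda \notin \sigma_A(T)$.

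The one point requiring care — and the main obstacle — is the passage from the operator identities holding only on $R(Z_A) = R(A)$ (or after multiplying by $Z_A$) to identities holding on all of $\textbf{R}(A^{1/2})$: this is where the density of $R(A)$ in $\textbf{R}(A^{1/2})$ and the boundedness of the operators involved are essential, exactly as used in the proof of Theorem \ref{Th1}. A secondary subtlety in the converse is making sure the lifted operator $S$ genuinely satisfies both the left and right inverse relations and not just one; this follows because the lift is a unital algebra homomorphism that is \emph{injective} on $\mathcal{B}_{A^{1/2}}(\mathcal{H})$ (by uniqueness in Lemma \ref{lemma1-2}(i)–(ii)), so an equality of lifts $\widetilde{S(T-\lambda I)} = \widetilde I = \widetilde I$ propagates back. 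I expect the whole argument to be short once these bookkeeping points are in place, with no genuinely hard estimate needed — in contrast to Theorem \ref{Th1}, the boundedness of the inverse is handed to us for free here.
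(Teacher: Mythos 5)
Your proposal is correct and follows essentially the same route as the paper: the forward inclusion by transporting the relations $A(T-\lambda I)S=AS(T-\lambda I)=A$ through $Z_A$ and using density of $R(A)$ in $\textbf{R}(A^{1/2})$, and the converse (for $R(A)$ closed) by lifting the inverse of $\widetilde{T}-\lambda\widetilde{I}$ via Lemma \ref{lemma1-2}(ii) and reading off $A(T-\lambda I)S=AS(T-\lambda I)=A$. One cosmetic remark: the correspondence $X\mapsto\widetilde{X}$ is injective only on operators with range in $\overline{R(A)}$, not on all of $\mathcal{B}_{A^{1/2}}(\mathcal{H})$, but your argument never actually needs that injectivity, since composing with $Z_A$ already yields exactly the defining identities of $A$-invertibility.
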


\begin{proof}
	To prove the first inclusion we suppose $\lambda \in \rho_A(T).$ Then there exists $S \in \mathcal{B}_{{A}^{1/2}}(\mathcal{H})$ such that $A(T-\lambda I)S=AS(T-\lambda I)=A.$ Now, by Lemma \ref{lemma1-2}(i) for all $x \in \mathcal{H},$ we get
	\begin{align}\label{P2E1}
		& A(T-\lambda I)Sx=AS(T-\lambda I)x=Ax\nonumber\\
		\implies &(\widetilde{T-\lambda I})\widetilde{S}Ax=\widetilde{S}(\widetilde{T-\lambda I})Ax=Ax.
	\end{align}
	Since $R(A)$ is dense in $\textbf{R}(A^{1/2})$ so the equality \eqref{P2E1} together with Lemma \ref{lemma1-2}(i) implies that $$(\widetilde{T}-\lambda \widetilde{I})\widetilde{S}=\widetilde{S}(\widetilde{T}-\lambda \widetilde{I})=\widetilde{I},$$ 
	where $\widetilde{I}$ is the identity operator on $\textbf{R}(A^{1/2}).$ Therefore, we have $\lambda \in \rho(\widetilde{T})$ and so $\sigma(\widetilde{T}) \subseteq \sigma_A(T).$
	
	Now, we consider $R(A)$ is closed and take $\lambda \in \rho(\widetilde{T}).$ Then $\widetilde{T}-\lambda \widetilde{I}$ is invertible in $\mathcal{B}(\textbf{R}(A^{1/2}))$ and so there exists $\widetilde{S} \in \mathcal{B}(\textbf{R}(A^{1/2}))$ such that $(\widetilde{T}-\lambda \widetilde{I})\widetilde{S}=\widetilde{S}(\widetilde{T}-\lambda \widetilde{I})=\widetilde{I}.$ Since $R(A)$ is closed then by Lemma \ref{lemma1-2}(ii), there exists unique $T,S \in \mathcal{B}_{{A}^{1/2}}(\mathcal{H})$ such that$\widetilde{T}Ax=ATx$ and $\widetilde{S}Ax=ASx$  for all $x \in \mathcal{H}.$ Therefore, we get
	\begin{align*}
		&(\widetilde{T}-\lambda \widetilde{I})\widetilde{S}Ax=\widetilde{S}(\widetilde{T}-\lambda \widetilde{I})Ax=Ax\\
		\implies & (\widetilde{T}-\lambda \widetilde{I})ASx=\widetilde{S}A(T-\lambda I)x=Ax\\
		\implies & A(T-\lambda I)Sx=AS(T-\lambda I)x=Ax,
	\end{align*}
	for all $x \in \mathcal{H}.$
	Thus, we obtain $A(T-\lambda I)S=AS(T-\lambda I)=A$ and so $\lambda \in \rho_A(T).$ Therefore, we get the desired inclusion $\sigma_A(T) \subseteq \sigma(\widetilde{T}).$ Hence, if $R(A)$ is closed then $\sigma_A(T)=\sigma(\widetilde{T}).$
\end{proof}
We note that  that the closedness of $R(A)$ is not necessary in Theorem \ref {P2}, see Example \ref{ex1}. We also mention that an alternative proof of Theorem \ref {P2} is given in  \cite{baklouti_OAM_2023}. 
Next we prove the spectral mapping theorem for semi-Hilbertian operators, to do so we need  the following  lemma which follows directly from Lemma \ref{lem31}.

%The next lemma follows directly from Lemma \ref{lem31}.
\begin{lemma}\label{lem5}
	If $\{T_1,T_2,\ldots,T_n\}\subseteq \mathcal{B}_{A^{1/2}}(\mathcal{H})$ is a set of pairwise commuting elements then $T_1T_2\ldots T_n$ is $A$-invertible in $\mathcal{B}_{A^{1/2}}(\mathcal{H})$ if and only if each $T_i$ is $A$-invertible in $\mathcal{B}_{A^{1/2}}(\mathcal{H}).$
\end{lemma}

\begin{theorem}\label{theo5}
	Let $T \in \mathcal{B}_{A^{1/2}}(\mathcal{H})$ and $p$ be a polynomial with complex coefficients.  Then 
	\[\sigma_{A}(p(T))=p(\sigma_{A}(T))=\{p(\lambda) : \lambda \in \sigma_{A}(T)\}.\]
\end{theorem}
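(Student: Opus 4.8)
The plan is to prove the spectral mapping theorem for the $A$-spectrum by mimicking the classical Banach-algebra argument, exploiting the multiplicativity of the map $T \mapsto \widetilde{T}$ established in Lemma \ref{lemma1-2}(i) and the factorization-plus-commuting-factors principle encapsulated in Lemma \ref{lem5}. The case $\deg p = 0$ is trivial (a nonzero constant polynomial $p \equiv c$ gives $p(T) = cI$, whose $A$-spectrum is $\{c\}$), so assume $p$ is nonconstant of degree $n \ge 1$ and write $p(z) - \mu = a \prod_{i=1}^{n}(z - \alpha_i(\mu))$ with $a \ne 0$, where $\alpha_1(\mu), \dots, \alpha_n(\mu)$ are the roots. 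Substituting $T$ for $z$ in the (scalar-coefficient) polynomial identity gives, in $\mathcal{B}_{A^{1/2}}(\mathcal{H})$,
\[
p(T) - \mu I = a \prod_{i=1}^{n}\bigl(T - \alpha_i(\mu) I\bigr),
\]
and the factors $T - \alpha_i(\mu) I$ pairwise commute.

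The core of the argument is then: $\mu \in \sigma_A(p(T))$ if and only if $p(T) - \mu I$ is not $A$-invertible, which by Lemma \ref{lem5} (the factors commute) happens if and only if at least one factor $T - \alpha_i(\mu) I$ fails to be $A$-invertible, i.e. if and only if some $\alpha_i(\mu) \in \sigma_A(T)$. Now I would observe the elementary scalar equivalence: $\mu \in p(\sigma_A(T))$ means $\mu = p(\lambda)$ for some $\lambda \in \sigma_A(T)$, which is precisely the statement that some root $\alpha_i(\mu)$ of $p(z) - \mu$ lies in $\sigma_A(T)$ (since $\lambda$ is a root of $p(z) - \mu$ exactly when $p(\lambda) = \mu$). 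Chaining these equivalences yields $\mu \in \sigma_A(p(T)) \iff \mu \in p(\sigma_A(T))$, which is the desired equality. I should note that $\sigma_A(T)$ is nonempty (this follows from $\sigma(\widetilde{T}) \subseteq \sigma_A(T)$ in Theorem \ref{P2} together with the fact that the spectrum of a bounded operator on the nonzero Hilbert space $\textbf{R}(A^{1/2})$ is nonempty; if $\mathcal{H} \ne N(A)$), so both sides are genuinely nonempty sets and the identity is not vacuous.

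One technical point deserves care: Lemma \ref{lem5} is stated for a finite set of pairwise commuting elements of $\mathcal{B}_{A^{1/2}}(\mathcal{H})$, and I must make sure the scalar multiple $a \ne 0$ does not cause trouble — but multiplication by a nonzero scalar is trivially $A$-invertible in $\mathcal{B}_{A^{1/2}}(\mathcal{H})$ (with $A$-inverse $a^{-1} I$) and commutes with everything, so I can either absorb it as one more factor or simply note $a\prod_i (T - \alpha_i I)$ is $A$-invertible iff $\prod_i(T - \alpha_i I)$ is. I also need each $T - \alpha_i(\mu) I$ to lie in $\mathcal{B}_{A^{1/2}}(\mathcal{H})$, which is immediate since $\mathcal{B}_{A^{1/2}}(\mathcal{H})$ is a subalgebra containing $I$ and closed under scalar combinations (stated in the preliminaries and in Lemma \ref{lemma1-2}(i)).

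The main obstacle, such as it is, is purely bookkeeping: keeping the logical direction of each ``if and only if'' straight, and handling the degenerate polynomial cases ($p$ constant; or $p$ nonconstant but $p(z) - \mu$ having repeated roots — harmless here since Lemma \ref{lem5} tolerates repeated factors as long as they commute, which they do). There is no analytic difficulty, no need for the Hilbert-space transfer beyond what is already packaged in the cited lemmas; the entire content is the factorization identity plus the commuting-factors invertibility criterion, exactly as in the classical proof.
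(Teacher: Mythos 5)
Your proposal is correct and follows essentially the same route as the paper: factor $p(T)-\mu I = a\prod_i (T-\alpha_i(\mu) I)$ via the fundamental theorem of algebra and apply Lemma \ref{lem5} to the commuting factors, then translate back through the scalar equivalence $\mu\in p(\sigma_A(T))\iff$ some $\alpha_i(\mu)\in\sigma_A(T)$. Your extra remarks (the constant case, absorbing the nonzero leading coefficient, nonemptiness of $\sigma_A(T)$, which in fact always holds since $A\neq 0$ makes $\textbf{R}(A^{1/2})$ nonzero) are harmless refinements of the same argument.
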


\begin{proof}
	First, we fix a $\lambda \in \mathbb{C}.$ Without loss of generality, we assume that $p(z)=\sum_{i=0}^{n}a_iz^i$ with $a_n \neq 0.$ Now, by the fundamental theorem of algebra we have
	\begin{align}\label{theo5e1}
		p(z)-\lambda=a_n\prod_{i=1}^{n}(z-\lambda_i),
	\end{align}
	where $\lambda_i$ are all the zeroes of the polynomial $p(z)-\lambda.$ Corresponding to \eqref{theo5e1} we have
	\begin{align}\label{theo5e2}
		p(T)-\lambda I=a_n\prod_{i=1}^{n}(T-\lambda_iI).
	\end{align}
	Therefore, by applying Lemma \ref{lem5}  we see that
	\begin{align*}
		\lambda \notin \sigma_{A}(p(T)) &\iff \lambda_i \notin \sigma_{A}(T)~~\mbox{for all $1\leq i \leq n$}\\
		&\iff \lambda \notin p(\sigma_{A}(T))
	\end{align*}
	and the proof is complete.
\end{proof}

In the following theorems we prove some basic results on $A$-normal operators, which justify the new definition of $A$-normal operators introduced in this article.

\begin{theorem}\label{L1}
	Let $T \in \mathcal{B}_{A}(\mathcal{H})$. Then $T$ is $A$-normal if and only if any one of the following holds: \\
	(i)   $\widetilde{T}$ is normal in $\textbf{R}(A^{1/2}).$\\
	(ii)  $T^{\sharp_A}$ is $A$-normal.\\
	(iii) $\|Tx\|_A=\|T^{\sharp_A}x\|_A$ for all $x \in \mathcal{H}.$
\end{theorem}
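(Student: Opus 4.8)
The plan is to prove the chain of equivalences by routing everything through the operator $\widetilde{T}$ on the Hilbert space $\textbf{R}(A^{1/2})$, where the classical theory of normal operators applies. First I would establish that $T$ is $A$-normal in the sense of Definition \ref{d1} if and only if $\widetilde{T}$ is normal. By Lemma \ref{lemma1-2}(i) we have $\widetilde{T^{\sharp_A}T} = \widetilde{T^{\sharp_A}}\,\widetilde{T}$ and $\widetilde{TT^{\sharp_A}} = \widetilde{T}\,\widetilde{T^{\sharp_A}}$, and by Lemma \ref{lemma3-6}(i) we have $\widetilde{T^{\sharp_A}} = (\widetilde{T})^*$. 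The key observation is that for any $S \in \mathcal{B}_{A^{1/2}}(\mathcal{H})$, the condition $AS = 0$ is equivalent to $\widetilde{S}Z_A = Z_A S = 0$ on a dense subset, hence to $\widetilde{S} = 0$ since $R(A)$ is dense in $\textbf{R}(A^{1/2})$. Applying this with $S = T^{\sharp_A}T - TT^{\sharp_A}$ gives: $AT^{\sharp_A}T = ATT^{\sharp_A}$ iff $A(T^{\sharp_A}T - TT^{\sharp_A}) = 0$ iff $\widetilde{T^{\sharp_A}T - TT^{\sharp_A}} = 0$ iff $(\widetilde{T})^*\widetilde{T} = \widetilde{T}(\widetilde{T})^*$, i.e. $\widetilde{T}$ is normal. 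This proves the equivalence of $A$-normality with (i).

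Next I would derive (ii) and (iii) from (i). For (ii): $T^{\sharp_A} \in \mathcal{B}_A(\mathcal{H})$ (one should note $(T^{\sharp_A})^{\sharp_A}$ is well-defined and relates to $T$ via $P_{\overline{R(A)}}$), and $\widetilde{T^{\sharp_A}} = (\widetilde{T})^*$; since an operator on a Hilbert space is normal iff its adjoint is normal, $\widetilde{T}$ normal $\iff$ $(\widetilde{T})^*$ normal $\iff$ $\widetilde{T^{\sharp_A}}$ normal $\iff$ $T^{\sharp_A}$ is $A$-normal, using the equivalence just established. For (iii): for all $x \in \mathcal{H}$ we compute $\|Tx\|_A = \|ATx\|_{\textbf{R}(A^{1/2})} = \|\widetilde{T}Ax\|_{\textbf{R}(A^{1/2})}$ and similarly $\|T^{\sharp_A}x\|_A = \|\widetilde{T^{\sharp_A}}Ax\|_{\textbf{R}(A^{1/2})} = \|(\widetilde{T})^*Ax\|_{\textbf{R}(A^{1/2})}$. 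So the condition $\|Tx\|_A = \|T^{\sharp_A}x\|_A$ for all $x \in \mathcal{H}$ is equivalent to $\|\widetilde{T}y\| = \|(\widetilde{T})^*y\|$ for all $y \in R(A)$, hence for all $y \in \textbf{R}(A^{1/2})$ by density and continuity. The latter is the classical characterization of $\widetilde{T}$ being normal (via the identity $\|\widetilde{T}y\|^2 - \|(\widetilde{T})^*y\|^2 = \langle ((\widetilde{T})^*\widetilde{T} - \widetilde{T}(\widetilde{T})^*)y, y\rangle$ and the fact that a self-adjoint operator vanishing on all diagonal inner products is zero). This closes the loop.

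The main obstacle I anticipate is the careful handling of $T^{\sharp_A}$ in part (ii): one must check that $T^{\sharp_A} \in \mathcal{B}_A(\mathcal{H})$ (so that ``$T^{\sharp_A}$ is $A$-normal'' even makes sense), and that $\widetilde{T^{\sharp_A}}$ is genuinely $(\widetilde{T})^*$ rather than something that only agrees with it up to the subtlety $R(T^{\sharp_A}) \subseteq \overline{R(A)}$. This requires invoking that $T \in \mathcal{B}_A(\mathcal{H})$ implies $T^{\sharp_A} \in \mathcal{B}_A(\mathcal{H})$ with $(T^{\sharp_A})^{\sharp_A} = P_{\overline{R(A)}}TP_{\overline{R(A)}}$, a standard fact from \cite{arias 1}; one then notes $\widetilde{P_{\overline{R(A)}}T P_{\overline{R(A)}}} = \widetilde{T}$ since $\widetilde{P_{\overline{R(A)}}} = \widetilde{I}$, so no information is lost. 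Everything else is a routine translation between the $A$-semi-norm on $\mathcal{H}$ and the genuine norm on $\textbf{R}(A^{1/2})$ via the intertwining $Z_A T = \widetilde{T} Z_A$, together with density of $R(A)$ in $\textbf{R}(A^{1/2})$ — the same mechanism already used repeatedly in the proofs of Theorems \ref{Th1} and \ref{P2}.
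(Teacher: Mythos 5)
Your proof is correct; for the equivalence with (i) it is essentially identical to the paper's argument (intertwining through $Z_A$, $\widetilde{T^{\sharp_A}}=(\widetilde{T})^*$ from Lemma \ref{lemma3-6}(i), multiplicativity of the tilde map, and density of $R(A)$ in $\textbf{R}(A^{1/2})$). Where you genuinely diverge is in (ii) and (iii): the paper never leaves the semi-Hilbertian space for these parts. It proves (ii) by a direct adjoint computation, taking adjoints in $ATT^{\sharp_A}=AT^{\sharp_A}T$ and using $T^*A=AT^{\sharp_A}$, $(T^{\sharp_A})^*A=AT^{\sharp_A\sharp_A}$ to arrive at the identity $AT^{\sharp_A\sharp_A}T^{\sharp_A}=AT^{\sharp_A}T^{\sharp_A\sharp_A}$, and it proves (iii) by the semi-inner-product computation $\langle AT^{\sharp_A}Tx,x\rangle=\langle Tx,Tx\rangle_A$ and $\langle ATT^{\sharp_A}x,x\rangle=\langle T^{\sharp_A}x,T^{\sharp_A}x\rangle_A$, the reverse implication resting on polarization in the complex semi-inner product. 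You instead push both statements into $\textbf{R}(A^{1/2})$ and quote the classical facts that an operator is normal iff its adjoint is, and iff $\|\widetilde{T}y\|=\|(\widetilde{T})^*y\|$ for all $y$, with a density-and-continuity step to pass from $R(A)$ to all of $\textbf{R}(A^{1/2})$. Both routes are sound; yours is uniform and leans on the classical theory, at the price of needing the standard facts $T^{\sharp_A}\in\mathcal{B}_A(\mathcal{H})$ and $\widetilde{T^{\sharp_A}}=(\widetilde{T})^*$ (which you correctly flag, and which are indeed available from \cite{arias 1} and Lemma \ref{lemma3-6}(i)), while the paper's intrinsic computations for (ii)--(iii) avoid any density argument and, as a by-product, yield the explicit relation \eqref{n1}, which the paper reuses later in the proof of Proposition \ref{prop2}; your argument would not supply that identity directly.
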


\begin{proof}
 (i) Suppose that $T$ is $A$-normal. Then by Lemma \ref{lemma1-2} (i) and Lemma \ref{lemma3-6} (i), for all  $x \in \mathcal{H}$ we get
	\begin{align*}
		AT^{\sharp_A}Tx=ATT^{\sharp_A}x &\implies \widetilde{T^{\sharp_A}T}Ax=\widetilde{TT^{\sharp_A}}Ax\\
		& \implies (\widetilde{T})^*\widetilde{T}Ax=\widetilde{T}(\widetilde{T})^*Ax.
	\end{align*}
	Therefore, we obtain
	\begin{align*}
		Z_AT^{\sharp_A}T=\widetilde{T^{\sharp_A}T}Z_A=(\widetilde{T})^*\widetilde{T}Z_A=\widetilde{T}(\widetilde{T})^*Z_A.
	\end{align*}
	Now, by Lemma \ref{lemma1-2} (i) we have $(\widetilde{T})^*\widetilde{T}=\widetilde{T}(\widetilde{T})^*.$ 
	
	To prove the converse part we consider $\widetilde{T}$ is normal in $\textbf{R}(A^{1/2})$. Then by applying Lemma \ref{lemma1-2} (i) and Lemma \ref{lemma3-6} (i), we have
	\begin{align*}
		(\widetilde{T})^*\widetilde{T}=\widetilde{T}(\widetilde{T})^*
		\implies &\widetilde{T^{\sharp_A}T}=\widetilde{TT^{\sharp_A}}\\
		\implies & \widetilde{T^{\sharp_A}T}Ax=\widetilde{TT^{\sharp_A}}Ax \,\, \mbox{for all $x \in \mathcal{H}$}\\
		\implies & AT^{\sharp_A}Tx=ATT^{\sharp_A}x \,\, \mbox{for all $x \in \mathcal{H}$}\\
		\implies & AT^{\sharp_A}T=ATT^{\sharp_A},
	\end{align*}
	as desired.

		(ii)	We have,  
	\begin{align}\label{n1}
	\mbox{$T$ is $A$-normal}&\iff 	ATT^{\sharp_A}=AT^{\sharp_A}T\nonumber \\
	&\iff (T^{\sharp_A})^*T^*A=T^*(T^{\sharp_A})^*A\nonumber\\
		& \iff (T^{\sharp_A})^*AT^{\sharp_A}=T^*AT^{\sharp_A \sharp_A}\nonumber\\
		& \iff AT^{\sharp_A \sharp_A}T^{\sharp_A}=AT^{\sharp_A}T^{\sharp_A \sharp_A},
	\end{align}
	as required.
	
	(iii) For all $x \in \mathcal{H},$ we have
	\begin{align*}
		AT^{\sharp_A}T=ATT^{\sharp_A} \iff & \langle AT^{\sharp_A}Tx,x \rangle = \langle ATT^{\sharp_A}x,x \rangle\\
		\iff &  \langle T^*ATx,x \rangle = \langle TT^{\sharp_A}x,x \rangle_A\\
		\iff &  \langle Tx,Tx \rangle_A = \langle T^{\sharp_A}x,T^{\sharp_A}x \rangle_A\\
		\iff & \|Tx\|_A=\|T^{\sharp_A}x\|_A,
	\end{align*}
	as desired.

\end{proof}

\begin{theorem}\label{t2}
	Let $T \in \mathcal{B}_{A}(\mathcal{H})$.   If $T$ is $A$-normal  then \\
		(i)  $(T-\lambda I)$ is $A$-normal, for all $\lambda \in \mathbb{C}.$ \\
		(ii)   $r_A(T)=\|T\|_A.$\\
\end{theorem}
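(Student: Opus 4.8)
The plan is to transfer everything to the Hilbert space $\textbf{R}(A^{1/2})$ via the operator $Z_A$ and exploit the fact, established in Theorem \ref{L1}(i), that $T$ is $A$-normal if and only if $\widetilde{T}$ is normal on $\textbf{R}(A^{1/2})$. Once we are there, both statements reduce to classical facts about normal operators on a Hilbert space, combined with the dictionary already available in Lemma \ref{lemma3-6}.

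For part (i), I would first observe that $T \in \mathcal{B}_A(\mathcal{H})$ forces $\lambda I \in \mathcal{B}_A(\mathcal{H})$ (indeed $(\lambda I)^{\sharp_A} = \overline{\lambda} P_{\overline{R(A)}}$), so $T - \lambda I \in \mathcal{B}_A(\mathcal{H})$ and $\widetilde{T - \lambda I} = \widetilde{T} - \lambda \widetilde{I}$ by Lemma \ref{lemma1-2}(i). Since $\widetilde{T}$ is normal, $\widetilde{T} - \lambda \widetilde{I}$ is normal on $\textbf{R}(A^{1/2})$ (a routine computation with adjoints), and then Theorem \ref{L1}(i) applied to the operator $T - \lambda I$ gives that $T - \lambda I$ is $A$-normal. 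Alternatively one can verify $A(T-\lambda I)^{\sharp_A}(T-\lambda I) = A(T-\lambda I)(T-\lambda I)^{\sharp_A}$ directly by expanding, using $AT^{\sharp_A}T = ATT^{\sharp_A}$, the identity $(T-\lambda I)^{\sharp_A} = T^{\sharp_A} - \overline{\lambda}P_{\overline{R(A)}}$, and the relations $AP_{\overline{R(A)}} = A$, $AT = ATP_{\overline{R(A)}}$; the cross terms $\overline{\lambda} A T P_{\overline{R(A)}}$ and $\lambda A P_{\overline{R(A)}} T^{\sharp_A}$ must be shown to match up, which is where a little care with the projection is needed, but this is the routine route and the $\widetilde{T}$-route avoids it.

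For part (ii), the cleanest path is: by Lemma \ref{lemma3-6}(ii), $\|T^n\|_A = \|\widetilde{T^n}\|_{\mathcal{B}(\textbf{R}(A^{1/2}))} = \|(\widetilde{T})^n\|_{\mathcal{B}(\textbf{R}(A^{1/2}))}$ (using $\widetilde{TS} = \widetilde{T}\widetilde{S}$), so from \eqref{r1} we get $r_A(T) = r(\widetilde{T})$, the usual spectral radius of $\widetilde{T}$. Since $\widetilde{T}$ is normal on the Hilbert space $\textbf{R}(A^{1/2})$, the classical identity $r(\widetilde{T}) = \|\widetilde{T}\|_{\mathcal{B}(\textbf{R}(A^{1/2}))}$ holds, and the right-hand side equals $\|T\|_A$ again by Lemma \ref{lemma3-6}(ii). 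Chaining these equalities yields $r_A(T) = \|T\|_A$. (One could instead invoke \eqref{r2} together with Theorem \ref{P2} to identify $r_A(T)$ with $\sup\{|\lambda| : \lambda \in \sigma(\widetilde{T})\} = r(\widetilde{T})$, but the submultiplicative-norm route via Lemma \ref{lemma3-6}(ii) is more direct and does not need $R(A)$ closed.)

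I do not expect a genuine obstacle here; the only place demanding attention is making sure the passage $T \in \mathcal{B}_A(\mathcal{H}) \Rightarrow T - \lambda I \in \mathcal{B}_A(\mathcal{H})$ and the formula for $(T-\lambda I)^{\sharp_A}$ are stated correctly (so that Theorem \ref{L1} is applicable to $T - \lambda I$), and, for (ii), that the classical normal-operator identity $r = \|\cdot\|$ is being applied on the genuine Hilbert space $\textbf{R}(A^{1/2})$ rather than on $\mathcal{H}$ with its semi-norm. Both are handled by the lemmas already in the excerpt, so the proof is essentially a bookkeeping exercise in the $\widetilde{\ \cdot\ }$ correspondence.
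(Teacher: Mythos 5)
Your argument is correct and follows essentially the same route as the paper: both parts are proved by transferring to $\textbf{R}(A^{1/2})$ via Theorem \ref{L1}(i), using normality of $\widetilde{T}$, and invoking Lemma \ref{lemma1-2}(i) and Lemma \ref{lemma3-6}(ii) to identify $r_A(T)$ with $r(\widetilde{T})=\|\widetilde{T}\|_{\mathcal{B}(\textbf{R}(A^{1/2}))}=\|T\|_A$. The extra care you note about $(T-\lambda I)^{\sharp_A}$ is not needed in the $\widetilde{T}$-route, which is exactly what the paper does.
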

\begin{proof}
(i) As $T$ is $A$-normal then it follows from Theorem \ref{L1}  (i) that $\widetilde{T}$ is normal. Hence, for all $\lambda \in \mathbb{C},$ $\widetilde{T}-\lambda \widetilde{I}$ is normal and so $\widetilde{T-\lambda I}$ is also normal. Again it follows from Theorem \ref{L1} (i) that $(T-\lambda I)$ is $A$-normal.

(ii) Since $T$ is $A$-normal so by Theorem \ref{L1} (i), $\widetilde{T}$ is normal. Then from \eqref{r1}, we have
\begin{align*}
	r_{A}(T)=\lim_{n \to \infty}\|T^n\|^{1/n}_{A}
	&=\lim_{n \to \infty}\|\widetilde{T^n}\|^{1/n}_{\mathcal{B}(\textbf{R}(A^{1/2}))}  ~~\mbox{\big(by Lemma \ref{lemma3-6} (ii)\big)}\\
	&=\lim_{n \to \infty}\|\widetilde{T}^n\|^{1/n}_{\mathcal{B}(\textbf{R}(A^{1/2}))}~~\mbox{\big(by Lemma \ref{lemma1-2} (i)\big)}\\
	&= r(\widetilde{T})=\|\widetilde{T}\|_{\mathcal{B}(\textbf{R}(A^{1/2}))}=\|T\|_A.
\end{align*}
This completes the proof.

\end{proof}

%To prove our next result we need the following lemma which was given in \cite{baklouti_OAM_2023}.

%\begin{lemma}\label{lem3}
%	If $T \in \mathcal{B}_{A}(\mathcal{H})$ is $A$-invertible in $ \mathcal{B}_{A^{1/2}}(\mathcal{H})$ then all of the $A$-inverses are in $\mathcal{B}_{A}(\mathcal{H}).$
	
%	In particular, if $S$ is an $A$-inverse of $T$ then $S \in  \mathcal{B}_{A}(\mathcal{H})$ and $S^{\sharp_A}$ is an $A$-inverse of $T^{\sharp_A}.$
%\end{lemma}

%For $T \in \mathcal{B}_{A}(\mathcal{H}),$ we denote the $A$-inverse of $T$ in $ \mathcal{B}_{A^{1/2}}(\mathcal{H})$ by $T^{-A}.$ Then form the Lemma \ref{lem3} we have $(T^{\sharp_A})^{-A}=S^{\sharp_A}=(T^{-A})^{\sharp_A}$ and $(T^{{\sharp_A}{\sharp_A}})^{-A}=S^{{\sharp_A}{\sharp_A}}.$ The next lemma is an easy consequence of Lemma \ref{lem3} and Lemma \ref{lem31}.

%\begin{lemma}\label{lem4}
%	Let $T_1,T_2\in \mathcal{B}_{A}(\mathcal{H})$ be $A$-invertible in $ \mathcal{B}_{A^{1/2}}(\mathcal{H})$ with $A$-inverses $S_1$ and $S_2,$ respectively. Then $T_1^{\sharp_A}T_2^{\sharp_A}$ is $A$-invertible in $ \mathcal{B}_{A^{1/2}}(\mathcal{H})$ with $A$-inverse $S_2^{\sharp_A}S_1^{\sharp_A},$ i.e. $(T_1^{\sharp_A}T_2^{\sharp_A})^{-A}=(T_2^{\sharp_A})^{-A}(T_1^{\sharp_A})^{-A}.$
%\end{lemma}

To prove our next result we introduce the following definition.

\begin{definition}
	An operator $T \in \mathcal{B}_{A}(\mathcal{H})$ is said to be $A$-invertible in $\mathcal{B}_{A}(\mathcal{H})$ if there exists $S \in \mathcal{B}_{A}(\mathcal{H})$ such that 
	$ATS=AST=A$ and $S$ is said to be an $A$-inverse of $T$ in $\mathcal{B}_{A}(\mathcal{H}).$
\end{definition}

\begin{lemma}
	Let $T \in \mathcal{B}_{A}(\mathcal{H})$ be $A$-invertible in $\mathcal{B}_{A}(\mathcal{H})$ with $A$-inverse $S.$ Then $T^{\sharp_A}$ is $A$-invertible in $\mathcal{B}_{A}(\mathcal{H})$ with $A$-inverse $S^{\sharp_A}.$
\end{lemma}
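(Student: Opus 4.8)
The plan is to unwind the definition of $A$-invertibility and then apply the $\sharp_A$ operation to the defining relations, being careful about the subtle point that $T^{\sharp_A\sharp_A}$ need not equal $T$ in general. We are given $S \in \mathcal{B}_A(\mathcal{H})$ with $ATS = AST = A$, and we want to produce an $A$-inverse of $T^{\sharp_A}$ in $\mathcal{B}_A(\mathcal{H})$, the natural candidate being $S^{\sharp_A}$ (which lies in $\mathcal{B}_A(\mathcal{H})$ since the $A$-adjoint of an element of $\mathcal{B}_A(\mathcal{H})$ is again in $\mathcal{B}_A(\mathcal{H})$). So the goal is to show $AT^{\sharp_A}S^{\sharp_A} = AS^{\sharp_A}T^{\sharp_A} = A$.

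First I would rewrite the relation $AX = Y^*A$ that characterizes $X = Y^{\sharp_A}$. The key identity is that for $R, Q \in \mathcal{B}_A(\mathcal{H})$ one has $(RQ)^{\sharp_A} = Q^{\sharp_A} R^{\sharp_A}$ and $A R^{\sharp_A} = R^* A$. Starting from $ATS = A$, I would take adjoints in the usual Hilbert space sense: $S^* T^* A = A$ (using $A^* = A$). Now $T^* A = A T^{\sharp_A}$ by definition of $T^{\sharp_A}$, so $S^* A T^{\sharp_A} = A$, and again $S^* A = A S^{\sharp_A}$, giving $A S^{\sharp_A} T^{\sharp_A} = A$. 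Symmetrically, from $AST = A$ I would get, taking adjoints, $T^* S^* A = A$, then $T^* A S^{\sharp_A} = A$, then $A T^{\sharp_A} S^{\sharp_A} = A$. Combining, $A T^{\sharp_A} S^{\sharp_A} = A S^{\sharp_A} T^{\sharp_A} = A$, which is exactly the statement that $S^{\sharp_A}$ is an $A$-inverse of $T^{\sharp_A}$ in $\mathcal{B}_A(\mathcal{H})$.

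The one place demanding care — and the main (mild) obstacle — is justifying the step $A R^{\sharp_A} = R^* A$ cleanly and making sure all intermediate operators genuinely belong to $\mathcal{B}_A(\mathcal{H})$ so that the $\sharp_A$ notation is legitimate; this is where the hypothesis $T, S \in \mathcal{B}_A(\mathcal{H})$ (rather than merely $\mathcal{B}_{A^{1/2}}(\mathcal{H})$) is used. Concretely, $R^{\sharp_A}$ is by construction the solution $X$ of $AX = R^*A$, so $AR^{\sharp_A} = R^*A$ is immediate, and $S^{\sharp_A} \in \mathcal{B}_A(\mathcal{H})$ because $(S^{\sharp_A})^{\sharp_A}$ exists (indeed $\mathcal{B}_A(\mathcal{H})$ is a $\sharp_A$-closed subalgebra). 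Everything else is a two-line chain of substitutions, so the proof is short:

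\begin{proof}
	Since $S$ is an $A$-inverse of $T$ in $\mathcal{B}_A(\mathcal{H})$ we have $ATS=AST=A$. Note that $S^{\sharp_A}\in \mathcal{B}_A(\mathcal{H})$. Taking Hilbert space adjoints in $ATS=A$ and using $A^*=A$ we get $S^*T^*A=A$. Now $T^*A=AT^{\sharp_A}$ and $S^*A=AS^{\sharp_A}$, hence
	\begin{align*}
		A=S^*T^*A=S^*AT^{\sharp_A}=AS^{\sharp_A}T^{\sharp_A}.
	\end{align*}
	Similarly, taking adjoints in $AST=A$ we obtain $T^*S^*A=A$, and therefore
	\begin{align*}
		A=T^*S^*A=T^*AS^{\sharp_A}=AT^{\sharp_A}S^{\sharp_A}.
	\end{align*}
	Thus $AT^{\sharp_A}S^{\sharp_A}=AS^{\sharp_A}T^{\sharp_A}=A$, which shows that $T^{\sharp_A}$ is $A$-invertible in $\mathcal{B}_A(\mathcal{H})$ with $A$-inverse $S^{\sharp_A}$.
\end{proof}
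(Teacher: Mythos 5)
Your proof is correct and is essentially the paper's own argument: the paper likewise takes adjoints of $ATS=AST=A$ and substitutes $T^*A=AT^{\sharp_A}$, $S^*A=AS^{\sharp_A}$ to reach $AS^{\sharp_A}T^{\sharp_A}=AT^{\sharp_A}S^{\sharp_A}=A$, merely writing the identities applied to a vector $x$ rather than as operator equations. Your explicit remark that $S^{\sharp_A}\in\mathcal{B}_A(\mathcal{H})$ is a harmless (and slightly more careful) addition.
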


\begin{proof}
	Since $T \in \mathcal{B}_{A}(\mathcal{H})$ is $A$-invertible in $\mathcal{B}_{A}(\mathcal{H})$ with an $A$-inverse $S$ then for all $x \in \mathcal{H},$ we obtain
	\begin{align*}
	&	S^*T^*Ax=T^*S^*Ax=Ax\\
	\implies & S^*AT^{\sharp_A}x=T^*AS^{\sharp_A}x=Ax\\
	\implies & AS^{\sharp_A}T^{\sharp_A}x=AT^{\sharp_A}S^{\sharp_A}x=Ax
	\end{align*}
	and hence we have 
	\begin{align}\label{n}
	AS^{\sharp_A}T^{\sharp_A}=AT^{\sharp_A}S^{\sharp_A}=A,
\end{align}
	 as desired.
\end{proof}

\begin{proposition}\label{prop2}
	Let $T \in \mathcal{B}_{A}(\mathcal{H})$ be $A$-invertible in $ \mathcal{B}_{A}(\mathcal{H})$ with an $A$-inverse $S.$ If $T$ is $A$-normal then $S$ is $A$-normal.
\end{proposition}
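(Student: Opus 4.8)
The plan is to translate everything to the Hilbert space $\textbf{R}(A^{1/2})$ via the map $T \mapsto \widetilde{T}$, use the classical fact that the inverse of a normal operator is normal, and then pull back. First I would observe that since $T$ is $A$-invertible in $\mathcal{B}_A(\mathcal{H})$ with $A$-inverse $S$, the relations $ATS=AST=A$ give, after applying Lemma \ref{lemma1-2}(i) and using that $R(A)$ is dense in $\textbf{R}(A^{1/2})$, the identities $\widetilde{T}\widetilde{S}=\widetilde{S}\widetilde{T}=\widetilde{I}$; thus $\widetilde{S}=(\widetilde{T})^{-1}$ in $\mathcal{B}(\textbf{R}(A^{1/2}))$. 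Since $T$ is $A$-normal, Theorem \ref{L1}(i) tells us $\widetilde{T}$ is normal on $\textbf{R}(A^{1/2})$.

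Next I would invoke the classical result that the inverse of an invertible normal operator on a Hilbert space is again normal (e.g.\ because $(\widetilde{T})^{-1}$ commutes with $\widetilde{T}$, hence with $(\widetilde{T})^*$ by Fuglede's theorem, hence with $((\widetilde{T})^{-1})^* = ((\widetilde{T})^*)^{-1}$). Therefore $\widetilde{S}=(\widetilde{T})^{-1}$ is normal on $\textbf{R}(A^{1/2})$. Applying the converse direction of Theorem \ref{L1}(i) to $S$ then yields that $S$ is $A$-normal, which is the assertion.

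\textbf{Main obstacle.} The only delicate point is making sure the passage $ATS=A \Rightarrow \widetilde{T}\widetilde{S}=\widetilde{I}$ is fully justified: this requires $Z_A(TS)=\widetilde{TS}\,Z_A=\widetilde{T}\widetilde{S}\,Z_A$ together with $\widetilde{I}Z_A=Z_A$ and the density of $R(A)=R(Z_A)$ in $\textbf{R}(A^{1/2})$ to cancel $Z_A$ on the right; this is exactly the argument already used in the proof of Theorem \ref{P2}, so I would simply cite that. An alternative, self-contained route avoiding Fuglede is to compute directly with the $A$-seminorm: from Theorem \ref{L1}(iii), $T$ is $A$-normal iff $\|Tx\|_A=\|T^{\sharp_A}x\|_A$ for all $x$, and one can try to derive $\|Sx\|_A=\|S^{\sharp_A}x\|_A$ from the corresponding identity for $T$ using $AST=ATS=A$ and the lemma giving $AS^{\sharp_A}T^{\sharp_A}=AT^{\sharp_A}S^{\sharp_A}=A$; this is more computational and the $\widetilde{\cdot}$ route is cleaner, so I would present the latter.
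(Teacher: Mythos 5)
Your proof is correct, but it follows a genuinely different route from the paper. The paper never passes to $\textbf{R}(A^{1/2})$: it works entirely inside the $\sharp_A$-calculus, first recording (in the lemma preceding the proposition) that $AS^{\sharp_A}T^{\sharp_A}=AT^{\sharp_A}S^{\sharp_A}=A$, upgrading this via $R(T^{\sharp_A}),R(S^{\sharp_A})\subseteq \overline{R(A)}$ and $A^{\dagger}A=P_{\overline{R(A)}}$ to the operator identities $S^{\sharp_A}T^{\sharp_A}=T^{\sharp_A}S^{\sharp_A}=P_{\overline{R(A)}}$ and $T^{\sharp_A\sharp_A}T^{\sharp_A}=T^{\sharp_A}T^{\sharp_A\sharp_A}$ (the latter being the $A$-normality of $T$ rewritten via the equivalence in Theorem \ref{L1}(ii)), and then computing directly that $S^{\sharp_A}S^{\sharp_A\sharp_A}=S^{\sharp_A\sharp_A}S^{\sharp_A}$ by inserting these projection identities; the conclusion for $S$ then comes from Theorem \ref{L1}(ii) ($T^{\sharp_A}$ $A$-normal iff $T$ is), not \ref{L1}(i). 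Your argument instead uses the dictionary $T\mapsto\widetilde{T}$: the cancellation step $ATS=AST=A\Rightarrow \widetilde{T}\widetilde{S}=\widetilde{S}\widetilde{T}=\widetilde{I}$ is legitimate and is exactly the density argument already carried out in the proof of Theorem \ref{P2}, Theorem \ref{L1}(i) applies to both $T$ and $S$ because the definition of $A$-invertibility in $\mathcal{B}_A(\mathcal{H})$ forces $S\in\mathcal{B}_A(\mathcal{H})$, and no closedness of $R(A)$ is needed since you only push forward, never pull back from $\mathcal{B}(\textbf{R}(A^{1/2}))$. What each buys: your route is shorter and delegates the work to classical Hilbert-space theory (and Fuglede is overkill there--for invertible normal $\widetilde{T}$ one has directly $\widetilde{T}^{-1}(\widetilde{T}^{-1})^{*}=(\widetilde{T}^{*}\widetilde{T})^{-1}=(\widetilde{T}\widetilde{T}^{*})^{-1}=(\widetilde{T}^{-1})^{*}\widetilde{T}^{-1}$), whereas the paper's computation stays self-contained in the semi-Hilbertian framework and exhibits the explicit $\sharp_A$-identities ($S^{\sharp_A}T^{\sharp_A}=T^{\sharp_A}S^{\sharp_A}=P_{\overline{R(A)}}$, etc.) that make the result work; the computational alternative you sketch at the end is essentially the paper's actual proof, except that the paper runs it through Theorem \ref{L1}(ii) rather than the seminorm criterion \ref{L1}(iii).
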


\begin{proof}
	Since $T$ is $A$-normal then from \eqref{n1} using the relations $R(T^{\sharp_A})\subseteq \overline{R(A)}$ and $A^{\dagger}A=P_{\overline{R(A)}},$ we get
	\begin{align}\label{n2}
		T^{{\sharp_A}{\sharp_A}}T^{\sharp_A}=T^{\sharp_A}T^{{\sharp_A}{\sharp_A}}
	\end{align}
    Again from the equality \eqref{n}, we have
    \begin{align}\label{n3}
    	S^{\sharp_A}T^{\sharp_A}=T^{\sharp_A}S^{\sharp_A}=P_{\overline{R(A)}}.
    \end{align}
	Now, by using the Moore-Penrose equation (see \cite{Penr55}), we
	observe that
	\begin{eqnarray*}
		T^{\sharp_A}P_{\overline{R(A)}}=P_{\overline{R(A)}}T^{\sharp_A}=T^{\sharp_A}.
	\end{eqnarray*}
 Therefore, by applying \eqref{n2} and \eqref{n3}, we obtain
	\begin{align*}
		S^{\sharp_A}S^{{\sharp_A}{\sharp_A}}=S^{\sharp_A}S^{{\sharp_A}{\sharp_A}}P_{\overline{R(A)}}
		&=S^{\sharp_A}S^{{\sharp_A}{\sharp_A}}T^{\sharp_A}T^{{\sharp_A}{\sharp_A}}S^{{\sharp_A}{\sharp_A}}S^{\sharp_A}\\
		&=S^{\sharp_A}S^{{\sharp_A}{\sharp_A}}T^{{\sharp_A}{\sharp_A}}T^{\sharp_A}S^{{\sharp_A}{\sharp_A}}S^{\sharp_A}\\
		&=P_{\overline{R(A)}}S^{{\sharp_A}{\sharp_A}}S^{\sharp_A}\\
		&=S^{{\sharp_A}{\sharp_A}}S^{\sharp_A}.
	\end{align*}
	Thus, we have $S^{\sharp_A}$ is $A$-normal and Theorem \ref{L1} (ii)
	implies that $S$ is $A$-normal.
\end{proof}

%Now, we prove the following result.
%
%\begin{theorem}\label{Theo2}
%	If $T$ is $A$-normal then $W_A(T) \subseteq conv(\sigma_A(T)).$ 
%\end{theorem}

%\begin{proof}
%	Since $T$ is $A$-normal then by Lemma \ref{L1}, $\widetilde{T}$ is normal in $\textbf{R}(A^{1/2}).$ Now, by applying \cite[Theorem 1.4-4]{Gustafson_book} for the normal operator $\widetilde{T},$ we obtain  $W(\widetilde{T}) \subseteq conv(\sigma(\widetilde{T})).$ This relation together with Lemma \ref{lemma6} and Proposition \ref{P2} implies the required result.
%\end{proof}

Now, we prove an important relation concerning the $A$-numerical range and $A$-spectrum for  $A$-normal operators.

\begin{theorem}\label{T12}
	If $T$ is $A$-normal then $\overline{W_A(T)}=conv(\sigma_A(T)).$
\end{theorem}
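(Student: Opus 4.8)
The plan is to reduce the statement to the classical result of Berberian that for a normal operator $N$ on a Hilbert space one has $\overline{W(N)}=conv(\sigma(N))$, and then transport this across the map $T\mapsto\widetilde{T}$ using the machinery already assembled. Since $T$ is $A$-normal, Theorem~\ref{L1}(i) tells us $\widetilde{T}$ is a normal operator on the Hilbert space $\textbf{R}(A^{1/2})$, so the classical result gives $\overline{W(\widetilde{T})}=conv(\sigma(\widetilde{T}))$. It remains to identify each side of this equality with the corresponding $A$-object.

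For the numerical range side, I would invoke Lemma~\ref{lemma3-6}(iv), which gives $W_A(T)\subseteq W(\widetilde{T})\subseteq\overline{W_A(T)}$; taking closures throughout yields $\overline{W_A(T)}=\overline{W(\widetilde{T})}$. For the spectrum side, Theorem~\ref{P2} gives only the one-sided inclusion $\sigma(\widetilde{T})\subseteq\sigma_A(T)$ in general (equality needing $R(A)$ closed), so I would combine this with Theorem~\ref{T11}, which gives $\sigma_A(T)\subseteq\overline{W_A(T)}$. Chaining these:
\begin{align*}
conv(\sigma(\widetilde{T}))\subseteq conv(\sigma_A(T))\subseteq conv\big(\overline{W_A(T)}\big)=\overline{W_A(T)},
\end{align*}
where the last equality uses that $W_A(T)$ is convex (hence so is its closure). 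On the other hand,
\begin{align*}
\overline{W_A(T)}=\overline{W(\widetilde{T})}=conv(\sigma(\widetilde{T}))\subseteq conv(\sigma_A(T)),
\end{align*}
using the classical Berberian theorem in the middle. These two chains sandwich everything together and force $\overline{W_A(T)}=conv(\sigma_A(T))=conv(\sigma(\widetilde{T}))$.

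The only genuinely delicate point is the appeal to Berberian's theorem for $\widetilde{T}$: one must make sure $\textbf{R}(A^{1/2})$ is a genuine (complex) Hilbert space, which was established in the preliminaries, and that $\widetilde{T}$ is bounded and normal on it, which is Lemma~\ref{lemma3-6}(ii) together with Theorem~\ref{L1}(i). Everything else is a formal chase of inclusions, and in particular the fact that $R(A)$ need not be closed causes no trouble here precisely because the one-sided inclusions in Theorem~\ref{P2} and Theorem~\ref{T11} point in compatible directions, with convexity of $W_A(T)$ closing the loop. I would also note in passing that the argument simultaneously shows $conv(\sigma_A(T))=conv(\sigma(\widetilde{T}))$, even though $\sigma_A(T)$ and $\sigma(\widetilde{T})$ themselves may differ when $R(A)$ is not closed.
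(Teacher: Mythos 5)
Your proposal is correct and follows essentially the same route as the paper: reduce via Theorem \ref{L1}(i) to the normal operator $\widetilde{T}$ on $\textbf{R}(A^{1/2})$, invoke the classical normal-operator result, and chase inclusions using Lemma \ref{lemma3-6}(iv), Theorem \ref{P2} and Theorem \ref{T11} together with convexity of $W_A(T)$. The only (harmless) difference is bookkeeping: you use the full equality $\overline{W(\widetilde{T})}=conv(\sigma(\widetilde{T}))$ and the identification $\overline{W_A(T)}=\overline{W(\widetilde{T})}$, which lets you avoid the paper's separate observation that $conv(\sigma_A(T))$ is closed (i.e.\ that $\sigma_A(T)$ is compact).
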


\begin{proof}
	Since $T$ is $A$-normal then by Theorem \ref{L1} (i), $\widetilde{T}$ is normal in $\textbf{R}(A^{1/2}).$ Now, by applying \cite[Theorem 1.4-4]{Gustafson_book} for the normal operator $\widetilde{T},$ we obtain  $W(\widetilde{T}) \subseteq conv(\sigma(\widetilde{T})).$ This relation together with Lemma \ref{lemma3-6} (iv) and Theorem \ref{P2} implies that 
	\begin{align}\label{Te1}
	&	W_A(T) \subseteq conv(\sigma_A(T)) \nonumber \\
		\implies  & \overline{W_A(T)} \subseteq \overline{conv(\sigma_A(T))}. 
	\end{align}
	On the other hand from Theorem \ref{T11} we get 
		\begin{align}\label{Te2}
		&\sigma_A(T) \subseteq \overline{W_A(T)} \nonumber \\
		\implies &	conv(\sigma_A(T)) \subseteq conv(\overline{W_A(T)}).
	\end{align}
	Observe that closure of the convex set $W_A(T)$ is convex and convex hull of the closed set $\sigma_A(T)$ is closed and so from \eqref{Te1} and \eqref{Te2} it follows that 
	\[  conv(\overline{W_A(T)}) = \overline{W_A(T)}  \subseteq \overline{conv(\sigma_A(T))}= conv(\sigma_A(T)) \subseteq conv(\overline{W_A(T)}) .\]
	This completes the proof. 
\end{proof}

Here we note that for $A=I$, Theorem \ref{T12} reduces to the classical result that for a normal operator on a Hilbert space the closure of the numerical range is the convex hull of the spectrum (see  \cite{Berberian_DJM_1981}).

\begin{remark}
  It can be easily proved that Theorem \ref{T12} also holds for $A$-hyponormal operators. Observe that an operator $T \in\mathcal{B}_{A}(\mathcal{H})$ is said to be $A$-hyponormal if $AT^{\sharp_A}T\geq ATT^{\sharp_A}.$ 
\end{remark}

\begin{cor}\label{cor1}
	If $T$ is $A$-normal and $rank(A)< \infty,$ then $W_A(T)=conv(\sigma_{A_p}(T)).$
\end{cor}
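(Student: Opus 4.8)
The plan is to reduce Corollary \ref{cor1} to Theorem \ref{T12} by showing that when $\mathrm{rank}(A)<\infty$ the set $\sigma_A(T)$ coincides with $\sigma_{A_p}(T)$ and is already convex-closed once we know it is finite. First I would observe that $\mathrm{rank}(A)<\infty$ forces $R(A)$ to be closed, so all the ``$R(A)$ closed'' clauses in the preceding results are available: by Theorem \ref{P2}, $\sigma_A(T)=\sigma(\widetilde T)$, and by Proposition \ref{P3}(ii), $\sigma_{A_p}(T)=\sigma_p(\widetilde T)$. Since $\mathbf{R}(A^{1/2})=R(A^{1/2})=R(A)$ is a finite-dimensional Hilbert space, $\widetilde T\in\mathcal B(\mathbf R(A^{1/2}))$ is an operator on a finite-dimensional space, hence $\sigma(\widetilde T)=\sigma_p(\widetilde T)$; combining these equalities gives $\sigma_A(T)=\sigma_{A_p}(T)$, and this set is finite.

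Next I would invoke Theorem \ref{T12}, which is applicable because $T$ is $A$-normal, to get $\overline{W_A(T)}=\mathrm{conv}(\sigma_A(T))=\mathrm{conv}(\sigma_{A_p}(T))$. It then remains to remove the closure bar on the left, i.e. to show $W_A(T)$ is already closed. The cleanest route is Lemma \ref{lemma3-6}(iv): since $R(A)$ is closed, $W_A(T)=W(\widetilde T)$, and $W(\widetilde T)$ is the numerical range of an operator on a finite-dimensional Hilbert space, which is compact (being the continuous image of the compact unit sphere of $\mathbf R(A^{1/2})$, a finite-dimensional space), hence closed. Therefore $W_A(T)=\overline{W_A(T)}=\mathrm{conv}(\sigma_{A_p}(T))$, which is the claim.

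I do not expect a serious obstacle here; the corollary is essentially a finite-dimensional specialization of Theorem \ref{T12}, and every ingredient is already in place in the excerpt. The one point requiring a line of care is the justification that $\mathrm{rank}(A)<\infty$ implies $R(A)$ closed (immediate, since finite-dimensional subspaces of a Hilbert space are closed) and that consequently $\mathbf R(A^{1/2})$ is finite-dimensional, so that both the identity $\sigma(\widetilde T)=\sigma_p(\widetilde T)$ and the compactness of $W(\widetilde T)$ are legitimate. An alternative to the $W(\widetilde T)$ argument, if one prefers to stay in $\mathcal H$, is to note that for $A$-normal $T$ every boundary point of $\overline{W_A(T)}=\mathrm{conv}(\sigma_A(T))$ is attained by an eigenvector of $\widetilde T$ pulled back via $Z_A$, but the finite-dimensional compactness argument is shorter and I would use that.
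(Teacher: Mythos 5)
Your argument is correct and follows essentially the same route as the paper: reduce to the finite-dimensional operator $\widetilde T$ on $\textbf{R}(A^{1/2})$, use Theorem \ref{P2} and Proposition \ref{P3} to get $\sigma_A(T)=\sigma_p(\widetilde T)=\sigma_{A_p}(T)$, use Lemma \ref{lemma3-6}(iv) and compactness of $W(\widetilde T)$ to see $W_A(T)$ is closed, and conclude via Theorem \ref{T12}. No discrepancies worth noting.
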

\begin{proof}
	Since $rank(A)< \infty,$ so $R(A)$ is closed and $\sigma(\widetilde{T})$ is a finite set and the elements are precisely the point spectrums of $\widetilde{T}$. Thus from Theorem \ref{P2} and Proposition \ref{P3}, we get $\sigma_{A}(T)=\sigma_p(\widetilde{T})=\sigma_{A_p}(T).$ Again as  $rank(A)< \infty,$ so $W(\widetilde{T})$ is compact and by applying Lemma \ref{lemma3-6} (iv) we obtain $W_A(T)$ is also compact. Now, the desired result follows immediately from Theorem \ref{T12}. 
\end{proof}
\begin{example}
	If we consider the matrices $\mathbb{A}=\begin{pmatrix}
		A_1&0\\
		0&A_2
	\end{pmatrix}$ and $\mathbb{T}=\begin{pmatrix}
	T_1&0\\
	0&T_2
\end{pmatrix},$ where ${A_1}=\begin{pmatrix}
1&0\\
0&1
\end{pmatrix},$ $A_2=\begin{pmatrix}
2&0&0\\
0&1&0\\
0&0&1
\end{pmatrix},$ ${T_1}=\begin{pmatrix}
0&1\\
0&0
\end{pmatrix}$ and $T_2=\begin{pmatrix}
2i&0&0\\
0&-\frac32-i&0\\
0&0&\frac32-i
\end{pmatrix}.$ Then it is easy to verify that $W_{\mathbb{A}}(\mathbb{T})=conv\{2i,-\frac32-i,\frac32-i\}$ but $\mathbb{T}$ is not  $\mathbb{A}$-normal. Therefore, the converse of Corollary \ref{cor1} is not true in general.
\end{example}

The final result (justifying the introduction of new definition of $A$-normal operator) in this section follows easily from Theorem \ref{T11} and Theorem \ref{t2} (ii).
\begin{cor}
		If $T$ is $A$-normal then $r_A(T)=w_A(T)=\|T\|_A.$
\end{cor}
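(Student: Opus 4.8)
The plan is to chain together the three chquantities $r_A(T)$, $w_A(T)$, and $\|T\|_A$ using the general inequalities that always hold plus the $A$-normality hypothesis. First I would recall the universal inequalities valid for any $T\in\mathcal{B}_{A^{1/2}}(\mathcal{H})$: we always have $w_A(T)\leq\|T\|_A$ (noted right after the definition of $w_A$), and from \eqref{r2} together with Theorem \ref{T11} we have $r_A(T)=\sup\{|\lambda|:\lambda\in\sigma_A(T)\}\leq\sup\{|\lambda|:\lambda\in\overline{W_A(T)}\}=w_A(T)$, since $\sigma_A(T)\subseteq\overline{W_A(T)}$ and taking closure does not change the supremum of the modulus over a bounded set. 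Thus in full generality $r_A(T)\leq w_A(T)\leq\|T\|_A$.

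Next I would invoke the $A$-normality of $T$. By Theorem \ref{t2}(ii), $A$-normality gives $r_A(T)=\|T\|_A$. Combining this with the chain of inequalities from the previous paragraph forces all three quantities to coincide:
\[
\|T\|_A=r_A(T)\leq w_A(T)\leq\|T\|_A,
\]
so $r_A(T)=w_A(T)=\|T\|_A$, which is exactly the assertion.

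Since both ingredients — the trivial inequality $w_A\leq\|\cdot\|_A$, the inclusion $\sigma_A(T)\subseteq\overline{W_A(T)}$ from Theorem \ref{T11}, and the equality $r_A(T)=\|T\|_A$ from Theorem \ref{t2}(ii) — are already available in the excerpt, there is essentially no obstacle here; the only mild point to state carefully is that $\sup_{\lambda\in\sigma_A(T)}|\lambda|\leq\sup_{\lambda\in\overline{W_A(T)}}|\lambda|=w_A(T)$, where the last equality uses that the supremum of a continuous function over a set equals its supremum over the closure. This short argument is precisely what the paper anticipates when it says the corollary ``follows easily from Theorem \ref{T11} and Theorem \ref{t2}(ii).''
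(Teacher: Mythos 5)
Your proposal is correct and is exactly the argument the paper intends: it squeezes the chain $r_A(T)\leq w_A(T)\leq\|T\|_A$ (from Theorem \ref{T11} with \eqref{r2}, and the trivial bound $w_A\leq\|\cdot\|_A$) against the equality $r_A(T)=\|T\|_A$ of Theorem \ref{t2}(ii). Nothing is missing; the remark about the supremum being unchanged under closure is the right small point to note.
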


\section{Anderson's theorem on semi-Hilbertian operators}\label{s2}

In 1970, Anderson in an unpublished note proved that if $T$ is a linear operator on $ \mathbb{C}^n$ such that the numerical range $W(T)$  is contained in the closed unit disk  $\overline{\mathbb{D}}$ and  it intersects the boundary $\partial\mathbb{D}$  of the disk at more than $n$ points, then $W(T)=\overline{\mathbb{D}}.$ This remarkable result is known as the Anderson's theorem and the proof is available in \cite{DWMAMS,TYLAA}. The infinite dimensional analogue of Anderson's theorem is studied in  \cite{Gau_PAMS}, in the form that,   if $T$ is a compact operator on a Hilbert space such that  $W(T)$ is contained in $\overline{ \mathbb{D}}$ and $\overline{W(T)}$ intersects  $\partial \mathbb{D}$ at infinitely many points, then $W(T)=\overline{ \mathbb{D}}.$  A nice generalization of the result  is studied in 
\cite{BSS_JMA_2018}, it says that if the compact operator is perturbed by a normal operator with the additional condition that $\sigma_{ess}(T) \subseteq {\mathbb{D}} $ then  the result holds true.  To be precise, the statement goes as : 
\begin{theorem}\cite[Th. 4]{BSS_JMA_2018}\label{l1}
	Let $T=N+K,$ where $N$ is normal and $K$ is compact operator on a Hilbert space $\mathcal{H}.$ If $W(T) \subseteq \overline{\mathbb{D}}$, $\sigma_{ess}(T) \subseteq \mathbb{D}$ and $\overline{W(T)}\cap \partial \mathbb{D}$ is an infinite set, then $W(T)=\overline{\mathbb{D}}.$
\end{theorem}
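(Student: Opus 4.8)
The plan is to read off $\overline{W(T)}$ from its support function, realised operator-theoretically as
\[
\Phi(\phi):=\sup\sigma\!\big(\mathrm{Re}(e^{-i\phi}T)\big),\qquad \phi\in\mathbb{R},
\]
a $2\pi$-periodic Lipschitz function. Since $\sup\sigma(B)=\sup W(B)$ for self-adjoint $B$, we have $\Phi(\phi)=\sup_{z\in\overline{W(T)}}\mathrm{Re}(e^{-i\phi}z)$, so the hypothesis $W(T)\subseteq\overline{\mathbb{D}}$ is exactly $\Phi\le 1$; and because the line $\mathrm{Re}(e^{-i\phi}z)=1$ meets $\overline{\mathbb{D}}$ only at $e^{i\phi}$, one checks that
\[
\overline{W(T)}\cap\partial\mathbb{D}=\{\,e^{i\phi}:\Phi(\phi)=1\,\}.
\]
Writing $E:=\{\phi:\Phi(\phi)=1\}$, the hypothesis says $E$ is an infinite subset of the circle, which is also closed since $\Phi$ is continuous. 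The theorem will follow once we show (a) $E$ is the whole circle, forcing $\overline{W(T)}=\overline{\mathbb{D}}$, and (b) every point of $\partial\mathbb{D}$ is actually attained in $W(T)$.

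The crux is a local statement: \emph{if $\phi_0$ is an accumulation point of $E$, then an entire open interval around $\phi_0$ lies in $E$.} First I would exploit $T=N+K$: $\mathrm{Re}(e^{-i\phi}T)=\mathrm{Re}(e^{-i\phi}N)+\mathrm{Re}(e^{-i\phi}K)$, where the second term is self-adjoint and compact and $\mathrm{Re}(e^{-i\phi}N)$ is a continuous function of the normal operator $N$; via the Calkin algebra this gives $\sigma_{ess}\!\big(\mathrm{Re}(e^{-i\phi}T)\big)=\{\mathrm{Re}(e^{-i\phi}z):z\in\sigma_{ess}(N)\}$. Since $\sigma_{ess}(T)=\sigma_{ess}(N)$ is a compact subset of $\mathbb{D}$, there is $\rho>0$ with $\sup\sigma_{ess}\!\big(\mathrm{Re}(e^{-i\phi}T)\big)\le 1-\rho$ for every $\phi$. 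Now $\Phi(\phi_0)=1>1-\rho$, so the top of $\sigma\!\big(\mathrm{Re}(e^{-i\phi_0}T)\big)$ is an isolated eigenvalue of finite multiplicity sitting a fixed distance above the essential spectrum; analytic perturbation theory (Kato--Rellich) then gives, for $\phi$ near $\phi_0$, finitely many real-analytic branches $\nu_1(\phi),\dots,\nu_r(\phi)$ describing the eigenvalues of $\mathrm{Re}(e^{-i\phi}T)$ above the level $1-\rho/2$, with $\Phi(\phi)=\max_j\nu_j(\phi)$. Since $\Phi=1$ along a sequence converging to $\phi_0$, some fixed branch $\nu_j$ equals $1$ along a subsequence, so by the identity theorem $\nu_j\equiv 1$ on a neighbourhood of $\phi_0$; combined with $\nu_j\le\Phi\le 1$ this forces $\Phi\equiv 1$ there, proving the local claim.

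From here a connectedness bootstrap finishes (a): the set $E'$ of accumulation points of $E$ is closed, and nonempty because $E$ is infinite in the compact circle; by the local claim $E'$ is open; the circle being connected, $E'$ is the whole circle. Hence every point of the circle has a neighbourhood inside $E$, so $E$ is open as well as closed and nonempty, whence $E$ is the whole circle, i.e. $\Phi\equiv 1$ and $\overline{W(T)}=\overline{\mathbb{D}}$. For (b), fix $\zeta=e^{i\phi}\in\partial\mathbb{D}$; then $\sup\sigma\!\big(\mathrm{Re}(e^{-i\phi}T)\big)=\Phi(\phi)=1$, which by the gap above the essential spectrum is a genuine eigenvalue, and a unit eigenvector $x$ satisfies $\mathrm{Re}\big(e^{-i\phi}\langle Tx,x\rangle\big)=1$ with $|\langle Tx,x\rangle|\le 1$, forcing $\langle Tx,x\rangle=e^{i\phi}$; thus $\partial\mathbb{D}\subseteq W(T)$. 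Finally, by convexity $\mathbb{D}=\mathrm{int}\,\overline{W(T)}=\mathrm{int}\,W(T)\subseteq W(T)$, so $W(T)=\overline{\mathbb{D}}$.

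I expect the genuine obstacle to be the analytic-perturbation step: one must argue carefully that near $\phi_0$ the spectral subspace of $\mathrm{Re}(e^{-i\phi}T)$ lying above the essential spectrum varies real-analytically, so that after transporting it onto a fixed finite-dimensional space by an analytic unitary one may apply Kato's theorem to obtain the branches $\nu_j$, and then handle the finite multiplicity by the pigeonhole passage to a single branch. The supporting identity $\sigma_{ess}\!\big(\mathrm{Re}(e^{-i\phi}T)\big)=\mathrm{Re}\big(e^{-i\phi}\sigma_{ess}(N)\big)$ deserves a line of justification via the Calkin algebra, but everything else---the support-function dictionary, the attainment argument, and the topological bootstrap---is routine.
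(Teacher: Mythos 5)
The paper itself contains no proof of this statement: Theorem \ref{l1} is quoted verbatim from \cite{BSS_JMA_2018} and is only used later as a black box (in the proof of Theorem \ref{t1}), so the comparison can only be with the argument in the cited source. Your proof is, as far as I can check, correct: the support-function dictionary $\Phi(\phi)=\sup\sigma\big(\mathrm{Re}(e^{-i\phi}T)\big)=\max\mathrm{Re}\big(e^{-i\phi}\,\overline{W(T)}\big)$, the uniform bound $\sup\sigma_{ess}\big(\mathrm{Re}(e^{-i\phi}T)\big)\le 1-\rho$ (via Weyl invariance $\sigma_{ess}(T)=\sigma_{ess}(N)$ and the spectral mapping for the normal element $\pi(N)$ in the Calkin algebra), the Riesz-projection/Rellich analysis giving finitely many real-analytic eigenvalue branches above the essential level near an accumulation point of $E=\{\phi:\Phi(\phi)=1\}$, the pigeonhole--identity-theorem step, the open-and-closed bootstrap on the connected circle, and the attainment argument (top of the spectrum is an isolated eigenvalue above the essential spectrum, so each $e^{i\phi}\in W(T)$, and convexity fills in the open disk) all hold, and the perturbation-theoretic details you flag are standard and fillable. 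This is also essentially the mechanism on which the cited result and its predecessors (e.g.\ the compact case of Gau--Wu \cite{Gau_PAMS}) rest: the hypotheses $\sigma_{ess}(T)\subseteq\mathbb{D}$ and $T=N+K$ are exactly what make the relevant part of $\sigma\big(\mathrm{Re}(e^{-i\phi}T)\big)$ discrete and finite-dimensional, so that infinitely many contacts of $\overline{W(T)}$ with $\partial\mathbb{D}$ force analytic branches to be identically $1$; so your route should be regarded as a faithful reconstruction rather than a genuinely different proof.
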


Recently, Anderson's theorem and its compact extension has been studied in the setting of semi-Hilbertian operators, see \cite{BKPS_LAA_23}.  
Our main goal of this section is to  generalize \cite[Th. 4]{BSS_JMA_2018} in the semi-Hilbertian structure. For this we need to introduce the following definitions in the setting of semi-Hilbertian operators in such a way that for $A=I$ it reduces to the Atkinson's characterization of Fredholm operators \cite{Atkinson} and essential spectrum \cite{FSW_1972}, respectively.

\begin{definition}
	An operator $T \in \mathcal{B}_{A^{1/2}}(\mathcal{H})$ is said to be $A$-Fredholm if there exists $S \in \mathcal{B}_{A^{1/2}}(\mathcal{H})$ and $K_1,K_2 \in \mathcal{K}_{{A}^{1/2}}(\mathcal{H})$ such that $ATS-A=AK_1$ and $AST-A=AK_2.$
\end{definition}

\begin{definition}
	For $T \in \mathcal{B}_{A^{1/2}}(\mathcal{H}),$ the $A$-essential spectrum of $T,$  denoted by $\sigma_{{A}_{ess}}(T),$  is defined as
	$$\sigma_{{A}_{ess}}(T)=\{\lambda \in \mathbb{C} : (T-\lambda I)~~ \mbox{is not $A$-Fredholm}\}.$$
\end{definition}
 Observe  that if $\mathcal{H}$ is an infinite dimensional Hilbert space then $T \in \mathcal{K}(\mathcal{H})$ if and only if $\sigma_{ess}(T)=\{0\},$ see \cite[p. 9]{wu_gau_book}.
We next study the relation between $A$-essential spectrum and $A$-spectrum. 
\begin{proposition}
	If $T \in \mathcal{B}_{A^{1/2}}(\mathcal{H})$ then $\sigma_{{A}_{ess}}(T) \subseteq \sigma_A(T).$
\end{proposition}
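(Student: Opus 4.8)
The plan is to prove the contrapositive: if $\lambda \notin \sigma_A(T)$, i.e. $\lambda \in \rho_A(T)$, then $(T - \lambda I)$ is $A$-Fredholm, hence $\lambda \notin \sigma_{A_{ess}}(T)$. First I would recall that $\lambda \in \rho_A(T)$ means there exists $S \in \mathcal{B}_{A^{1/2}}(\mathcal{H})$ with $A(T-\lambda I)S = AS(T-\lambda I) = A$. To match the definition of $A$-Fredholm for $(T-\lambda I)$, I need to exhibit $S' \in \mathcal{B}_{A^{1/2}}(\mathcal{H})$ and $K_1, K_2 \in \mathcal{K}_{A^{1/2}}(\mathcal{H})$ with $A(T-\lambda I)S' - A = AK_1$ and $AS'(T-\lambda I) - A = AK_2$. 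Taking $S' = S$ and $K_1 = K_2 = 0$ works immediately, since the zero operator lies in $\mathcal{K}_{A^{1/2}}(\mathcal{H})$ and $A(T-\lambda I)S - A = 0 = A\cdot 0$ and likewise for the other equation. Thus $(T - \lambda I)$ is $A$-Fredholm.

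So the argument is essentially a one-line observation: $A$-invertibility in $\mathcal{B}_{A^{1/2}}(\mathcal{H})$ is a strictly stronger condition than $A$-Fredholmness, because in the definition of $A$-Fredholm one is allowed to take the compact remainders $K_1, K_2$ to be zero. I would write this up as: let $\lambda \in \rho_A(T)$; pick the $A$-inverse $S$ of $(T - \lambda I)$; then $S \in \mathcal{B}_{A^{1/2}}(\mathcal{H})$, $0 \in \mathcal{K}_{A^{1/2}}(\mathcal{H})$, and the defining equations of $A$-Fredholmness hold with $K_1 = K_2 = 0$; hence $\lambda \notin \sigma_{A_{ess}}(T)$. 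Taking complements gives $\sigma_{A_{ess}}(T) \subseteq \sigma_A(T)$.

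There is no real obstacle here — the only thing to be slightly careful about is confirming that $0 \in \mathcal{K}_{A^{1/2}}(\mathcal{H})$, which is immediate since the zero operator is $A$-compact (it is a trivial consequence of whatever definition of $A$-compactness is in force from \cite{BKPS_LAA_23}), and that $\mathcal{B}_{A^{1/2}}(\mathcal{H})$ is closed under the operations implicitly used, which is guaranteed by $S$ already being in $\mathcal{B}_{A^{1/2}}(\mathcal{H})$ by hypothesis. Everything else is a direct substitution into the definition, so the proof is a couple of lines.
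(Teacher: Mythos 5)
Your proposal is correct and is essentially identical to the paper's own proof: both take $\lambda\in\rho_A(T)$, use the $A$-inverse $S$ of $(T-\lambda I)$ as the witness in the $A$-Fredholm definition with $K_1=K_2=0$, and note that $0\in\mathcal{K}_{A^{1/2}}(\mathcal{H})$. No issues.
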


\begin{proof}
	Suppose that $\lambda \notin \sigma_A(T).$ Then there exists $S \in \mathcal{B}_{A^{1/2}}(\mathcal{H})$ such that $A(T-\lambda I)S-A=AS(T-\lambda I)-A=0.$ As $0 \in \mathcal{K}_{{A}^{1/2}}(\mathcal{H})$ hence $\lambda \notin \sigma_{{A}_{ess}}(T),$ as desired.  
\end{proof}

In the next theorem we prove that $A$-essential spectrum of $T$ and  essential spectrum of $T$ are equal, if $R(A)$ is closed.

\begin{theorem}\label{p1}
	Let $T \in \mathcal{B}_{A^{1/2}}(\mathcal{H})$ and let $R(A)$ be closed. Then $\sigma_{{A}_{ess}}(T) =\sigma_{ess}(\widetilde{T}).$
\end{theorem}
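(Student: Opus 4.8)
The plan is to transfer the problem to the Hilbert space $\textbf{R}(A^{1/2})$ via the correspondence $T \mapsto \widetilde{T}$ from Lemma \ref{lemma1-2}, exactly as was done for the $A$-spectrum in Theorem \ref{P2}. Since $R(A)$ is closed, Lemma \ref{lemma1-2}(ii) gives a two-sided dictionary: every $T \in \mathcal{B}_{A^{1/2}}(\mathcal{H})$ produces $\widetilde{T} \in \mathcal{B}(\textbf{R}(A^{1/2}))$, and conversely every bounded operator on $\textbf{R}(A^{1/2})$ is $\widetilde{S}$ for some $S \in \mathcal{B}_{A^{1/2}}(\mathcal{H})$. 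It suffices to show that $(T - \lambda I)$ is $A$-Fredholm if and only if $(\widetilde{T} - \lambda \widetilde{I}) = \widetilde{T - \lambda I}$ is Fredholm on $\textbf{R}(A^{1/2})$; replacing $T$ by $T - \lambda I$, I may as well show: $T$ is $A$-Fredholm $\iff$ $\widetilde{T}$ is Fredholm.

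For the forward direction, suppose $T$ is $A$-Fredholm, so there are $S \in \mathcal{B}_{A^{1/2}}(\mathcal{H})$ and $K_1, K_2 \in \mathcal{K}_{A^{1/2}}(\mathcal{H})$ with $ATS - A = AK_1$ and $AST - A = AK_2$. Applying $Z_A$ and using $Z_A R = \widetilde{R} Z_A$ together with the multiplicativity in Lemma \ref{lemma1-2}(i), these become $\widetilde{T}\widetilde{S}Z_A - Z_A = \widetilde{K_1}Z_A$ and $\widetilde{S}\widetilde{T}Z_A - Z_A = \widetilde{K_2}Z_A$ on $\mathcal{H}$; since $R(Z_A) = R(A)$ is dense in $\textbf{R}(A^{1/2})$ (and here closed), we get $\widetilde{T}\widetilde{S} - \widetilde{I} = \widetilde{K_1}$ and $\widetilde{S}\widetilde{T} - \widetilde{I} = \widetilde{K_2}$. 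By Lemma \ref{lemma3-6}(iii), $\widetilde{K_1}$ and $\widetilde{K_2}$ are compact on $\textbf{R}(A^{1/2})$, so $\widetilde{S}$ is an inverse of $\widetilde{T}$ modulo compacts, whence $\widetilde{T}$ is Fredholm by Atkinson's theorem. For the converse, if $\widetilde{T}$ is Fredholm, Atkinson gives $\widetilde{S}' \in \mathcal{B}(\textbf{R}(A^{1/2}))$ and compact $\widetilde{K_1}', \widetilde{K_2}'$ with $\widetilde{T}\widetilde{S}' - \widetilde{I} = \widetilde{K_1}'$, $\widetilde{S}'\widetilde{T} - \widetilde{I} = \widetilde{K_2}'$. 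Since $R(A)$ is closed, Lemma \ref{lemma1-2}(ii) pulls $\widetilde{S}'$ back to some $S \in \mathcal{B}_{A^{1/2}}(\mathcal{H})$ with $\widetilde{S} = \widetilde{S}'$, and Lemma \ref{lemma3-6}(iii) pulls the compact operators back to $K_1, K_2 \in \mathcal{K}_{A^{1/2}}(\mathcal{H})$ with $\widetilde{K_i} = \widetilde{K_i}'$. Then $\widetilde{TS - I - K_1} = 0$, and since $R(A)$ is dense in $\textbf{R}(A^{1/2})$ the map $R \mapsto \widetilde{R}$ applied on $R(A)$ shows $A(TS - I - K_1) = 0$, i.e. $ATS - A = AK_1$, and similarly $AST - A = AK_2$. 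Hence $T$ is $A$-Fredholm.

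Combining the two directions, $\lambda \notin \sigma_{A_{ess}}(T) \iff (T - \lambda I)$ is $A$-Fredholm $\iff (\widetilde{T} - \lambda\widetilde{I})$ is Fredholm $\iff \lambda \notin \sigma_{ess}(\widetilde{T})$, which gives the claimed equality $\sigma_{A_{ess}}(T) = \sigma_{ess}(\widetilde{T})$.

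The one point that requires genuine care — and which I expect to be the main obstacle — is the converse direction: a priori the Fredholm parametrix $\widetilde{S}'$ on $\textbf{R}(A^{1/2})$ need not come from an operator in $\mathcal{B}_{A^{1/2}}(\mathcal{H})$, and the whole argument hinges on the closedness of $R(A)$ to invoke the surjectivity clause of Lemma \ref{lemma1-2}(ii) (together with Lemma \ref{lemma3-6}(iii), which also assumes $R(A)$ closed, to lift compactness). One must check that the lifted $S$ and $K_i$ genuinely satisfy the \emph{operator} identities $ATS - A = AK_1$ etc., not merely the identities after applying $Z_A$; this follows because two operators $R_1, R_2$ with $\widetilde{R_1} = \widetilde{R_2}$ satisfy $Z_A R_1 = \widetilde{R_1} Z_A = \widetilde{R_2} Z_A = Z_A R_2$, i.e. $A R_1 = A R_2$, which is exactly the form of identity appearing in the definition of $A$-Fredholm. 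Once this bookkeeping is in place the proof is routine.
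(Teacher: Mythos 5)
Your proposal is correct and follows essentially the same route as the paper's own proof: both directions transfer the ``invertible modulo $A$-compacts'' relations through $Z_A$ using Lemma \ref{lemma1-2} (with density of $R(A)$ in $\textbf{R}(A^{1/2})$ for the forward direction) and Lemma \ref{lemma3-6}(iii), and the converse uses closedness of $R(A)$ to lift the parametrix and the compact remainders back to $\mathcal{B}_{A^{1/2}}(\mathcal{H})$ and $\mathcal{K}_{A^{1/2}}(\mathcal{H})$, exactly as in the paper. The only cosmetic difference is that you phrase the target via Atkinson's theorem for $\widetilde{T}$, while the paper works directly with the Calkin-type definition of $\sigma_{ess}(\widetilde{T})$; this changes nothing of substance.
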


\begin{proof}
	Suppose that $\lambda \notin \sigma_{{A}_{ess}}(T).$ Then there exists $S \in \mathcal{B}_{A^{1/2}}(\mathcal{H})$ and $K_1,K_2 \in \mathcal{K}_{{A}^{1/2}}(\mathcal{H})$ such that $A(T-\lambda I)S-A=AK_1$ and $AS(T-\lambda I)-A=AK_2.$ Now, by applying Lemma \ref{lemma1-2}(i) for all $x \in \mathcal{H},$ we get 
	$(\widetilde{(T-\lambda I)}\widetilde{S}-\widetilde{I})Ax=\widetilde{K_1}Ax$ and this implies that $\widetilde{(T-\lambda I)}\widetilde{S}-\widetilde{I}=\widetilde{K_1},$ as $R(A)$ is dense in $\textbf{R}(A^{1/2}).$
	Since $R(A)$ is closed so it follows from Lemma \ref{lemma3-6}(iii) that $\widetilde{K_1}$ is compact. Similarly, we get $\widetilde{S}\widetilde{(T-\lambda I)}-\widetilde{I}=\widetilde{K_2},$ where $\widetilde{K_2}$ is also compact. Thus $\lambda \notin \sigma_{ess}(\widetilde{T}).$ 
	
	Conversely, suppose that $\lambda \notin \sigma_{ess}(\widetilde{T}).$ Then there exists $\widetilde{S} \in \mathcal{B}(\textbf{R}(A^{1/2}))$ and $\widetilde{K_1},\widetilde{K_2} \in \mathcal{K}(\textbf{R}(A^{1/2}))$ such that
	\begin{align}\label{p1e1}
	 \widetilde{(T-\lambda I)}\widetilde{S}-\widetilde{I}=\widetilde{K_1}
	 \end{align} 
     and 
     \begin{align}\label{p1e2}
     \widetilde{S}\widetilde{(T-\lambda I)}-\widetilde{I}=\widetilde{K_2}.
	\end{align}
	  Since $R(A)$ is closed then by Lemma \ref{lemma1-2}(ii) and Lemma \ref{lemma3-6}(iii) there exists unique $S \in \mathcal{B}_{A^{1/2}}(\mathcal{H})$ and $K_1,K_2 \in \mathcal{K}_{{A}^{1/2}}(\mathcal{H})$ such that $ASx=\widetilde{S}Ax$ and $AK_ix=\widetilde{K_i}Ax$ for $i=1,2$ and $x \in \mathcal{H}.$ Now, from the equalities \eqref{p1e1} and \eqref{p1e2} we have $A(T-\lambda I)S-A=AK_1$ and $AS(T-\lambda I)-A=AK_2.$ Hence, $\lambda \notin \sigma_{{A}_{ess}}(T)$ and this completes the proof.	  
\end{proof}

If we consider the operators mentioned in Example \ref{ex1} then we obtain that $\sigma_{{A}_{ess}}(T)=\sigma_{ess}(\widetilde{T})=\{\lambda_0\}.$ Thus the closedness of $R(A)$ is not necessary in Theorem \ref{p1}.

The  next corollary follows from Theorem \ref{p1} and Lemma \ref{lemma3-6} (iii).

\begin{cor}\label{c1}
	Let $T \in \mathcal{B}_{A^{1/2}}(\mathcal{H})$.
	     If $rank(A) < \infty$ then  $\sigma_{{A}_{ess}}(T)=\phi.$ 
		 If $ rank(A)=\infty$ and $R(A)$ is closed then $T \in \mathcal{K}_{A^{1/2}}(\mathcal{H}) $ if and only if $ \sigma_{{A}_{ess}}(T)=\{0\}.$
\end{cor}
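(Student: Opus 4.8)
\textbf{Proof proposal for Corollary \ref{c1}.}

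The plan is to deduce both assertions directly from Theorem \ref{p1} together with the classical characterizations of finite-dimensionality and compactness via the essential spectrum. First I would handle the case $\mathrm{rank}(A)<\infty$. Here $R(A)$ is automatically closed (being finite-dimensional), so Theorem \ref{p1} applies and gives $\sigma_{A_{ess}}(T)=\sigma_{ess}(\widetilde{T})$. But $\textbf{R}(A^{1/2})$ has dimension equal to $\mathrm{rank}(A)<\infty$, so $\mathcal{B}(\textbf{R}(A^{1/2}))$ is a finite-dimensional algebra in which every operator is invertible modulo compacts — equivalently, $\widetilde{T}-\lambda\widetilde{I}$ is Fredholm for every $\lambda$, so $\sigma_{ess}(\widetilde{T})=\phi$. (If one prefers to avoid invoking the Fredholm theory on a finite-dimensional space directly, note that on a finite-dimensional space $\mathcal{K}=\mathcal{B}$, so $\widetilde{S}=0$ together with $\widetilde{K_1}=\widetilde{K_2}=-\widetilde{I}$ witnesses the $A$-Fredholmness of every $\widetilde{T}-\lambda\widetilde{I}$.) This yields $\sigma_{A_{ess}}(T)=\phi$.

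For the second assertion, assume $\mathrm{rank}(A)=\infty$ and $R(A)$ closed; then $\textbf{R}(A^{1/2})=R(A^{1/2})=R(A)$ is an infinite-dimensional Hilbert space. Theorem \ref{p1} again gives $\sigma_{A_{ess}}(T)=\sigma_{ess}(\widetilde{T})$, and the remark recorded just before the proposition — that on an infinite-dimensional Hilbert space $\widetilde{T}$ is compact if and only if $\sigma_{ess}(\widetilde{T})=\{0\}$ (see \cite[p. 9]{wu_gau_book}) — converts the condition $\sigma_{A_{ess}}(T)=\{0\}$ into the condition that $\widetilde{T}$ is compact. Finally, since $R(A)$ is closed, Lemma \ref{lemma3-6}(iii) tells us that $\widetilde{T}$ is compact if and only if $T\in\mathcal{K}_{A^{1/2}}(\mathcal{H})$. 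Chaining these equivalences gives $T\in\mathcal{K}_{A^{1/2}}(\mathcal{H})\iff\sigma_{A_{ess}}(T)=\{0\}$.

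There is essentially no obstacle here: the corollary is a formal consequence of the two preceding results, and the only point requiring a word of care is making sure the hypotheses line up — namely that finite rank forces $R(A)$ closed (so Theorem \ref{p1} is available in the first case without an extra assumption), and that $\mathrm{rank}(A)=\infty$ with $R(A)$ closed genuinely makes $\textbf{R}(A^{1/2})$ infinite-dimensional so that the classical compactness criterion applies. Both are immediate. I would therefore write the proof in two short paragraphs mirroring the two cases, citing Theorem \ref{p1}, Lemma \ref{lemma3-6}(iii), and \cite[p. 9]{wu_gau_book}.
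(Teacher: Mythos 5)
Your proposal is correct and follows exactly the route the paper intends: the paper derives this corollary from Theorem \ref{p1} and Lemma \ref{lemma3-6}(iii), together with the remark that on an infinite-dimensional Hilbert space compactness is equivalent to the essential spectrum being $\{0\}$, which is precisely your chain of equivalences. Your observations that $\mathrm{rank}(A)<\infty$ forces $R(A)$ closed (so Theorem \ref{p1} applies) and that $\mathrm{rank}(A)=\infty$ with $R(A)$ closed makes $\textbf{R}(A^{1/2})$ infinite-dimensional are exactly the hypotheses-checking the paper leaves implicit.
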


We are now in a position to prove the desired result.

\begin{theorem}\label{t1}
	Let $T \in \mathcal{B}_{A^{1/2}}(\mathcal{H})$ be such that $T=N+K,$ where $N$ is $A$-normal and $K \in \mathcal{K}_{{A}^{1/2}}(\mathcal{H}).$ Let $R(A)$ be closed. If $W_A(T) \subseteq \overline{\mathbb{D}}$, $\sigma_{{A}_{ess}}(T) \subseteq \mathbb{D}$ and $\overline{W_A(T)}\cap \partial \mathbb{D}$ is an infinite set, then $W_A(T)=\overline{\mathbb{D}}.$
\end{theorem}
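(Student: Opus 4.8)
The plan is to transfer the entire problem to the Hilbert space $\textbf{R}(A^{1/2})$ via the map $T\mapsto\widetilde{T}$ and then invoke Theorem \ref{l1} there. First I would observe that since $R(A)$ is closed, Lemma \ref{lemma1-2}(i) gives $\widetilde{T}=\widetilde{N}+\widetilde{K}$ in $\mathcal{B}(\textbf{R}(A^{1/2}))$, where by Theorem \ref{L1}(i) the operator $\widetilde{N}$ is normal (since $N$ is $A$-normal) and by Lemma \ref{lemma3-6}(iii) the operator $\widetilde{K}$ is compact (since $K$ is $A$-compact and $R(A)$ is closed). Next I would translate the three hypotheses: by Lemma \ref{lemma3-6}(iv), $R(A)$ closed gives $W_A(T)=W(\widetilde{T})$, so $W_A(T)\subseteq\overline{\mathbb{D}}$ becomes $W(\widetilde{T})\subseteq\overline{\mathbb{D}}$ and $\overline{W_A(T)}\cap\partial\mathbb{D}$ infinite becomes $\overline{W(\widetilde{T})}\cap\partial\mathbb{D}$ infinite; by Theorem \ref{p1}, $\sigma_{A_{ess}}(T)=\sigma_{ess}(\widetilde{T})$, so $\sigma_{A_{ess}}(T)\subseteq\mathbb{D}$ becomes $\sigma_{ess}(\widetilde{T})\subseteq\mathbb{D}$.

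With all hypotheses of Theorem \ref{l1} verified for $\widetilde{T}$ on the Hilbert space $\textbf{R}(A^{1/2})$, I conclude $W(\widetilde{T})=\overline{\mathbb{D}}$. Finally, applying Lemma \ref{lemma3-6}(iv) once more in the direction $W_A(T)=W(\widetilde{T})$ (valid since $R(A)$ is closed), we obtain $W_A(T)=\overline{\mathbb{D}}$, which completes the proof.

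I do not expect any serious obstacle here: the theorem is essentially a dictionary translation of \cite[Th. 4]{BSS_JMA_2018}, and every ingredient needed for the translation — the algebra homomorphism property of $T\mapsto\widetilde{T}$, the coincidence $W_A(T)=W(\widetilde{T})$ under $R(A)$ closed, the normality transfer, the compactness transfer, and the essential-spectrum transfer — has already been established in Lemmas \ref{lemma1-2}, \ref{lemma3-6}, Theorems \ref{L1}, \ref{p1}. The only point requiring a modicum of care is making sure each of these auxiliary results is being applied in the direction that genuinely requires $R(A)$ closed (the ``furthermore'' clauses), but this is routine bookkeeping rather than a real difficulty.
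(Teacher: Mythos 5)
Your proposal is correct and follows essentially the same route as the paper: reduce to $\widetilde{T}=\widetilde{N}+\widetilde{K}$ on $\textbf{R}(A^{1/2})$ via Lemma \ref{lemma1-2}(i), use Theorem \ref{L1}(i) and Lemma \ref{lemma3-6}(iii) for normality and compactness, apply Theorem \ref{l1} there, and pull the conclusion back through Theorem \ref{p1} and Lemma \ref{lemma3-6}(iv) under the closedness of $R(A)$. Your write-up simply spells out the hypothesis translation a bit more explicitly than the paper does; no gap.
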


\begin{proof}
	Since,  $T \in \mathcal{B}_{A^{1/2}}(\mathcal{H})$ so by Lemma \ref{lemma1-2} (i), we get $\widetilde{T}=\widetilde{N}+\widetilde{K}.$ As $R(A)$ is closed thus from Theorem \ref{L1} (i) and Lemma \ref{lemma3-6} (iii) it follows that $\widetilde{N}$ and $\widetilde{K}$ are normal and compact operators in $\textbf{R}(A^{1/2}),$ respectively. Now, by applying Lemma \ref{l1} for the operator $\widetilde{T}$ on the Hilbert space $\textbf{R}(A^{1/2}),$ we obtain that if $W(\widetilde{T}) \subseteq \overline{\mathbb{D}}$, $\sigma_{ess}(\widetilde{T}) \subseteq \mathbb{D}$ and $\overline{W(\widetilde{T})}\cap \partial \mathbb{D}$ is an infinite set, then $W(\widetilde{T})=\overline{\mathbb{D}}.$ Finally, the desired result follows from Theorem \ref{p1} and Lemma \ref{lemma3-6} (iv).
\end{proof}

Considering $N=0$ in Theorem \ref{t1} and by applying Corollary  \ref{c1}, we get the next result which was recently proved in \cite[Th. 2.7]{BKPS_LAA_23}.

 \begin{cor}\label{c2}
	Let $T\in \mathcal{K}_{{A}^{1/2}}(\mathcal{H})$ and $R(A)$ be closed. If ${W_A(T)} \subseteq \overline{\mathbb{D}}$ and $\overline{W_A(T)}$ intersects $\partial \mathbb{D}$ at infinitely many points, then $W_A(T) = \overline{\mathbb{D}}.$
\end{cor}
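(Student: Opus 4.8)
The plan is to reduce Corollary \ref{c2} to Theorem \ref{t1} by choosing the normal summand to be zero. Given $T \in \mathcal{K}_{A^{1/2}}(\mathcal{H})$, write $T = N + K$ with $N = 0$ and $K = T$; clearly $N = 0$ is $A$-normal (the defining identity $AN^{\sharp_A}N = ANN^{\sharp_A}$ holds trivially since both sides vanish), and $K = T \in \mathcal{K}_{A^{1/2}}(\mathcal{H})$ by hypothesis. So the decomposition hypothesis of Theorem \ref{t1} is met, and $R(A)$ is closed by assumption.

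Next I would verify the three analytic hypotheses of Theorem \ref{t1} for this $T$. The condition $W_A(T) \subseteq \overline{\mathbb{D}}$ and the condition that $\overline{W_A(T)} \cap \partial\mathbb{D}$ is infinite are given directly. The only thing to check is $\sigma_{A_{ess}}(T) \subseteq \mathbb{D}$. Since $T$ is $A$-compact and $R(A)$ is closed with $\mathcal{H}$ infinite dimensional, Corollary \ref{c1} gives $\sigma_{A_{ess}}(T) = \{0\}$ when $\operatorname{rank}(A) = \infty$, and $\sigma_{A_{ess}}(T) = \emptyset$ when $\operatorname{rank}(A) < \infty$; in either case $\sigma_{A_{ess}}(T) \subseteq \{0\} \subseteq \mathbb{D}$. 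Having checked all hypotheses, Theorem \ref{t1} immediately yields $W_A(T) = \overline{\mathbb{D}}$, completing the proof.

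Since this is a direct specialization, there is essentially no obstacle; the only point requiring a moment's care is the invocation of Corollary \ref{c1} to handle the essential-spectrum hypothesis, and the mild case split there on whether $\operatorname{rank}(A)$ is finite or infinite (the statement of Corollary \ref{c2} as written presumes the ambient infinite-dimensional setting fixed at the start of the paper, so Corollary \ref{c1} applies). I would write the proof in essentially one short paragraph, stating the substitution $N = 0$, noting $N$ is trivially $A$-normal, citing Corollary \ref{c1} for $\sigma_{A_{ess}}(T) \subseteq \mathbb{D}$, and then applying Theorem \ref{t1}.
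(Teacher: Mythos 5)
Your proposal is correct and is exactly the paper's argument: the authors also obtain this corollary by taking $N=0$ in Theorem \ref{t1} and invoking Corollary \ref{c1} to verify the hypothesis $\sigma_{A_{ess}}(T)\subseteq \mathbb{D}$. Your brief case split on $\operatorname{rank}(A)$ just spells out the use of Corollary \ref{c1} a little more explicitly than the paper does.
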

Clearly, for $A=I$ Theorem \ref{t1} reduces to existing result \cite[Th. 4]{BSS_JMA_2018}.

\section{strongly $A$-numerically closed classes of $\mathcal{B}_{{A}^{1/2}}(\mathcal{H})$}\label{s3}

 An operator class $ \mathcal{C} \subseteq \mathcal{B}(\mathcal{H})$ is said to be strongly numerically closed, if for any $T \in \mathcal{C}$ and any $ \epsilon >0$, there exists a compact operator $K$ such that $\|K\| < \epsilon,$  $ T + K \in \mathcal{C}$ and $W(T + K) $ is closed.  The study of strongly numerically closed operator class was motivated by Bourin \cite{Bourin_JOT_2003} and then followed in \cite{Ji_oam_18,zhu_BJMA_2015}.  Analogously, we define an operator class $\mathcal{C} \subseteq \mathcal{B}_{{A}^{1/2}}(\mathcal{H}) $  to be strongly $A$-numerically closed if for any $T \in \mathcal{C}$ and any $\epsilon>0$ there exists $K \in \mathcal{K}_{{A}^{1/2}}(\mathcal{H})$ with $\|K\|_A<\epsilon$ such that $T+K \in \mathcal{C}$ and $\overline{W_A(T+K)}=W_A(T+K).$  It is known that the class of normal and hyponormal operators are all strongly numerically closed. Recently \cite{liang_JMAA_2023}, it is proved that the class of complex symmetric operators defined on Hilbert spaces are strongly numerically closed.  Our plan in this section is to study the strongly $A$-numerically closed class of operators. Following the work done in \cite{Ji_oam_18,zhu_BJMA_2015}, we can easily show that $A$-normal and $A$-hyponormal operators are all strongly $A$-numericallly closed class of operators.  Motivated by the notion of complex symmetric operators studied in \cite{liang_JMAA_2023} we here introduce the following definition.

\begin{definition}
	An operator $T \in \mathcal{B}_{{A}^{1/2}}(\mathcal{H})$ is said to induce a complex symmetric operator (induce CSO)  if $\widetilde{T} \in \mathcal{B}(\textbf{R}(A^{1/2}))$ is a complex symmetric operator. 
\end{definition}
Since normal operators are complex symmetric \cite{Garcia_TAMS_2006}, then from Theorem \ref{L1} (i) it follows that if $T$ is $A$-normal then $T$ induces $CSO.$

\begin{proposition}\label{P1}
	Let $T \in \mathcal{B}_{A^{1/2}}(\mathcal{H})$ and $R(A)$ be closed. Then $T$ induces $CSO$ if and only if there exists an orthonormal basis $\{Ax_n : n\in \mathbb{N}\}$ of $\textbf{R}(A^{1/2})$ such that $\langle Tx_n, x_m\rangle_A =\langle Tx_m, x_n \rangle_A.$
\end{proposition}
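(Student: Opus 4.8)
The plan is to reduce the statement to the classical characterization of complex symmetric operators on a Hilbert space and then transport it through the isometric identification between $\mathcal{H}/N(A)$ (completed to $\textbf{R}(A^{1/2})$) and the operator $\widetilde{T}$. Recall the standard fact (due to Garcia--Putinar) that a bounded operator $B$ on a separable Hilbert space $\mathcal{K}$ is complex symmetric if and only if there is an orthonormal basis $\{f_n\}$ of $\mathcal{K}$ with respect to which the matrix of $B$ is symmetric, i.e. $\langle Bf_n, f_m\rangle = \langle Bf_m, f_n\rangle$ for all $n,m$. Since $R(A)$ is closed we have $R(A)=R(A^{1/2})=\overline{R(A)}=\textbf{R}(A^{1/2})$ as sets, so $\{Ax_n : n \in \mathbb{N}\}$ ranges over all sequences in $\textbf{R}(A^{1/2})$, and such a sequence is an orthonormal basis of $\textbf{R}(A^{1/2})$ precisely when $(Ax_n, Ax_m) = \langle x_n, x_m\rangle_A = \delta_{nm}$ and the closed span is everything.

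First I would unwind the definition: $T$ induces CSO means $\widetilde{T} \in \mathcal{B}(\textbf{R}(A^{1/2}))$ is complex symmetric. Applying the Garcia--Putinar criterion to $\widetilde{T}$, this is equivalent to the existence of an orthonormal basis $\{g_n\}$ of $\textbf{R}(A^{1/2})$ with $(\widetilde{T}g_n, g_m) = (\widetilde{T}g_m, g_n)$ for all $n,m$. Then I would write each basis vector as $g_n = Ax_n$ for a suitable $x_n \in \mathcal{H}$ — this is exactly where closedness of $R(A)$ is used, since it guarantees $R(Z_A) = R(A) = \textbf{R}(A^{1/2})$ so every element of $\textbf{R}(A^{1/2})$ is of the form $Ax$. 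Now use the two key identities recorded in the preliminaries: $(Ax, Ay) = \langle x,y\rangle_A$ and $Z_A T = \widetilde{T} Z_A$, i.e. $\widetilde{T}(Ax) = A(Tx)$. These give $(\widetilde{T}g_n, g_m) = (\widetilde{T}Ax_n, Ax_m) = (A(Tx_n), Ax_m) = \langle Tx_n, x_m\rangle_A$, and similarly with $n,m$ swapped. Hence the symmetry condition for $\widetilde{T}$ translates verbatim into $\langle Tx_n, x_m\rangle_A = \langle Tx_m, x_n\rangle_A$, while the orthonormal-basis condition for $\{g_n\}$ translates into $\{Ax_n\}$ being an orthonormal basis of $\textbf{R}(A^{1/2})$.

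For the converse direction the same computation runs backwards: given an orthonormal basis $\{Ax_n\}$ of $\textbf{R}(A^{1/2})$ with $\langle Tx_n,x_m\rangle_A = \langle Tx_m,x_n\rangle_A$, the identities above show $(\widetilde{T}Ax_n, Ax_m) = (\widetilde{T}Ax_m, Ax_n)$, so $\widetilde{T}$ has a symmetric matrix in the orthonormal basis $\{Ax_n\}$, whence $\widetilde{T}$ is complex symmetric by the Garcia--Putinar characterization, i.e. $T$ induces CSO. I do not anticipate a serious obstacle here; the only point requiring care is making sure the dictionary between "$\{Ax_n\}$ is an orthonormal basis of $\textbf{R}(A^{1/2})$" and "$\langle x_n, x_m\rangle_A = \delta_{nm}$ together with total span" is stated cleanly, and that the surjectivity of $Z_A$ onto $\textbf{R}(A^{1/2})$ (equivalently $R(A)$ closed $\Rightarrow R(A)=\textbf{R}(A^{1/2})$, via Lemma \ref{lemma1-2}) is invoked explicitly, since without it one could only produce approximate bases. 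The separability of $\mathcal{H}$ (hence of $\textbf{R}(A^{1/2})$) is what lets us index the basis by $\mathbb{N}$ and apply the Garcia--Putinar theorem in its sequential form.
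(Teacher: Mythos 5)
Your proposal is correct and follows essentially the same route as the paper: both directions reduce to the Garcia--Putinar characterization of complex symmetric operators applied to $\widetilde{T}$ on $\textbf{R}(A^{1/2})$, use closedness of $R(A)$ to write the orthonormal basis vectors as $Ax_n$, and translate via the identities $\widetilde{T}Ax = ATx$ and $(Ax,Ay)=\langle x,y\rangle_A$. No gaps; your extra care about surjectivity of $Z_A$ and separability is exactly the (implicit) content of the paper's argument.
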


\begin{proof}
	Suppose $T$ induces $CSO$. Then $\widetilde{T}$ is a $CSO$ on the Hilbert space $\textbf{R}(A^{1/2}).$ Since $R(A)$ is closed so $R(A^{1/2})=R(A)$ and by \cite[Prop. 2]{Garcia_TAMS_2006} there exists an orthonormal basis $\{Ax_n : n\in \mathbb{N}\}$ of $R(A)$ such that
	\begin{align*}
		&\big( \widetilde{T} Ax_n, Ax_m \big)_{\textbf{R}(A^{1/2})}=\big( \widetilde{T} Ax_m, Ax_n \big)_{\textbf{R}(A^{1/2})}\\
		\implies & \big( ATx_n, Ax_m \big)_{\textbf{R}(A^{1/2})}=\big( ATx_m, Ax_n \big)_{\textbf{R}(A^{1/2})}\,\,\,\,\big(\mbox{by Lemma \ref{lemma1-2}(i)}\big)\\
		\implies & \langle Tx_n, x_m\rangle_A =\langle Tx_m, x_n \rangle_A,
	\end{align*}
	as desired.
	
	Conversely, suppose that there exists an orthonormal basis $\{Ax_n : n\in \mathbb{N}\}$ of $\textbf{R}(A^{1/2})$ such that $\langle Tx_n, x_m\rangle_A =\langle Tx_m, x_n \rangle_A.$ Then by Lemma \ref{lemma1-2} (i) we have
	\begin{align*}
		&\langle Tx_n, x_m\rangle_A =\langle Tx_m, x_n \rangle_A\\
		\implies& \big( ATx_n, Ax_m \big)_{\textbf{R}(A^{1/2})}=\big( ATx_m, Ax_n \big)_{\textbf{R}(A^{1/2})}\\
		\implies &\big( \widetilde{T} Ax_n, Ax_m \big)_{\textbf{R}(A^{1/2})}=\big( \widetilde{T} Ax_m, Ax_n \big)_{\textbf{R}(A^{1/2})}.
	\end{align*}
	Hence by  \cite[Prop. 2]{Garcia_TAMS_2006} we conclude that $\widetilde{T}$ is a $CSO.$ This completes the proof.
\end{proof}

	Now we give an example of an operator $T$ which is not $CSO$ but it induces $CSO.$
\begin{example}
 The operator $T : \mathbb{C}^3 \to \mathbb{C}^3$ defined by the matrix $ T= \begin{pmatrix}
		1&a&0\\
		0&0&b\\
		0&0&1
	\end{pmatrix}$ is $CSO$ if and only if $|a|=|b|,$ see \cite[Example 7]{Garcia_TAMS_2006}. So $ T= \begin{pmatrix}
		1&1&0\\
		0&0&0\\
		0&0&1
	\end{pmatrix}$ is in $\mathcal{B}_{A^{1/2}}(\mathcal{H})$ but not a $CSO.$ Considering $ A= \begin{pmatrix}
		1&0&0\\
		0&0&0\\
		0&0&0
	\end{pmatrix},$ we have $R(A)=span\{Ae_1\},$ where $ e_1= \begin{pmatrix}
		1\\
		0\\
		0
	\end{pmatrix}.$ Now, it follows from Proposition \ref{P1} that $T$ induces $CSO.$
\end{example}
Next, we prove the following proposition. 
\begin{proposition}\label{pp1}
	Let $T \in \mathcal{B}_{A^{1/2}}(\mathcal{H})$ and $R(A)$ be closed. If rank of $AT$ is finite then $T \in \mathcal{K}_{A^{1/2}}(\mathcal{H}).$
\end{proposition}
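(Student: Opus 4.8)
The plan is to pass to the associated operator $\widetilde{T} \in \mathcal{B}(\textbf{R}(A^{1/2}))$ and use the characterization of $A$-compact operators from Lemma \ref{lemma3-6} (iii), which says that since $R(A)$ is closed, $T \in \mathcal{K}_{A^{1/2}}(\mathcal{H})$ if and only if $\widetilde{T}$ is compact. So it suffices to show that $\widetilde{T}$ is a finite-rank operator, hence compact. First I would recall that $Z_A T = \widetilde{T} Z_A$, i.e. $\widetilde{T}(Ax) = ATx$ for all $x \in \mathcal{H}$ (this is Lemma \ref{lemma1-2} (i)). Since $R(A)$ is closed we have $R(A^{1/2}) = R(A)$, so $R(A) = \textbf{R}(A^{1/2})$ as a set, and $R(Z_A) = R(A)$ is dense in (indeed equal to) $\textbf{R}(A^{1/2})$.

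The key observation is that the range of $\widetilde{T}$ is controlled by the range of $AT$. Concretely, for any $w \in R(A) = \textbf{R}(A^{1/2})$, write $w = Ax$ for some $x \in \mathcal{H}$; then $\widetilde{T}w = \widetilde{T}(Ax) = ATx \in R(AT)$. Hence $R(\widetilde{T}) \subseteq R(AT)$. Since $R(AT)$ is finite-dimensional by hypothesis, $R(\widetilde{T})$ is finite-dimensional, so $\widetilde{T}$ is a finite-rank bounded operator on $\textbf{R}(A^{1/2})$ and therefore compact. Applying Lemma \ref{lemma3-6} (iii) gives $T \in \mathcal{K}_{A^{1/2}}(\mathcal{H})$, as desired.

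The main point to be careful about is the identification $R(A) = \textbf{R}(A^{1/2})$ as sets (not merely densely): this uses exactly the closedness of $R(A)$, via the equality $R(A^{1/2}) = R(A)$ when $R(A)$ is closed, which is already invoked repeatedly in the excerpt (e.g. in the proof of Proposition \ref{P3}). Once one has that every vector of $\textbf{R}(A^{1/2})$ is of the form $Ax$, the range inclusion $R(\widetilde{T}) \subseteq R(AT)$ is immediate, and finiteness of $\mathrm{rank}(AT)$ does the rest. So there is no real obstacle here; the argument is a short two-line reduction to Lemma \ref{lemma3-6} (iii) via the transfer map $Z_A$. One should perhaps also note for completeness that $\widetilde{T}$ is automatically bounded since $T \in \mathcal{B}_{A^{1/2}}(\mathcal{H})$ (Lemma \ref{lemma1-2} (i) again), though a finite-rank linear map on a Hilbert space given by $\widetilde{T}(Ax) = ATx$ is bounded in any case.
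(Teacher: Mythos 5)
Your proposal is correct and takes essentially the same route as the paper: the paper's proof likewise observes that, since $R(A)$ is closed and $\mathrm{rank}(AT)<\infty$, the operator $\widetilde{T}$ is finite rank on $\textbf{R}(A^{1/2})$, hence compact, and then concludes via Lemma \ref{lemma3-6} (iii). Your write-up merely makes explicit the details the paper leaves implicit, namely $R(A^{1/2})=R(A)=\textbf{R}(A^{1/2})$ and the inclusion $R(\widetilde{T})\subseteq R(AT)$ coming from $\widetilde{T}(Ax)=ATx$.
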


\begin{proof}
	Since $R(A)$ is closed and $rank(AT)<\infty$ so $\widetilde{T}$ is a finite rank operator on the Hilbert space $\textbf{R}(A^{1/2}).$ Thus $\widetilde{T}$ is a compact operator and it follows from Lemma \ref{lemma3-6} (iii) that $T \in \mathcal{K}_{A^{1/2}}(\mathcal{H}).$
\end{proof}

However, the converse is not necessarily true. If we consider $A=I$ and 
$T : \ell_2 \to \ell_2$ defined as $T(x_1,x_2,\ldots)=(\frac{x_1}{1},\frac{x_2}{2},\ldots)$ then $T\in \mathcal{K}_{A^{1/2}}(\mathcal{H})$ but rank of $AT$ is not finite.

We next prove  the following theorem in the setting of semi-Hilbertian structure.

\begin{theorem}\cite{Bourin_JOT_2003}\label{lemma7}
	Let $T \in \mathcal{B}(\mathcal{H}).$ Then for $\epsilon>0$ there exists normal, finite rank operator $N$ with $\|N\|<\epsilon$ such that $\overline{W(T+N)}=W(T+N).$
\end{theorem}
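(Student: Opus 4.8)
The plan is to run the argument through the essential numerical range $W_e(B):=\bigcap\{\overline{W(B+K)}:K\in\mathcal{K}(\mathcal{H})\}$ of an operator $B\in\mathcal{B}(\mathcal{H})$. I will use several standard facts about $W_e$: it is a non-empty compact convex subset of $\mathbb{C}$; it is unchanged under compact (in particular, finite rank) perturbations; $\mu\in W_e(B)$ if and only if $\mu\in\overline{W(P_{\mathcal{M}}B|_{\mathcal{M}})}$ for every finite-codimensional closed subspace $\mathcal{M}\subseteq\mathcal{H}$; and the key classical lemma that whenever $W(B)\neq\overline{W(B)}$ one has $\overline{W(B)}\setminus W(B)\subseteq W_e(B)$. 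Granting these, it suffices to produce a normal finite rank $N$ with $\|N\|<\epsilon$ such that $W_e(T)\subseteq\mathrm{int}\big(\overline{W(T+N)}\big)$: indeed then $W_e(T+N)=W_e(T)$, and since a convex set and its closure share the same interior we have $\mathrm{int}\big(\overline{W(T+N)}\big)\subseteq W(T+N)$, so the key lemma forces $\overline{W(T+N)}\setminus W(T+N)\subseteq W_e(T)\subseteq W(T+N)$, an inclusion possible only if the left-hand set is empty, i.e.\ $\overline{W(T+N)}=W(T+N)$.

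To construct $N$, fix $\epsilon>0$ and choose a polygon $Q$ with vertices $q_1,\dots,q_m$ satisfying $W_e(T)+\tfrac{\epsilon}{4}\overline{\mathbb{D}}\subseteq Q\subseteq W_e(T)+\tfrac{\epsilon}{3}\overline{\mathbb{D}}$; then $W_e(T)\subseteq\mathrm{int}(Q)$, and writing $\mu_j$ for the point of $W_e(T)$ nearest to $q_j$ we have $|q_j-\mu_j|\le\tfrac{\epsilon}{3}$. Applying the third fact above repeatedly --- work successively in $\mathcal{H}$, then $\{\phi_1\}^{\perp}$, then $\{\phi_1,\phi_2\}^{\perp}$, and so on, using that $\langle P_{\mathcal{M}}T\phi,\phi\rangle=\langle T\phi,\phi\rangle$ when $\phi\in\mathcal{M}$ --- I obtain orthonormal vectors $\phi_1,\dots,\phi_m$ with $|\langle T\phi_j,\phi_j\rangle-\mu_j|<\tfrac{\epsilon}{8}$ for each $j$. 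Then set
\[
N:=\sum_{j=1}^{m}(q_j-\mu_j)\,\langle\,\cdot\,,\phi_j\rangle\,\phi_j .
\]
Because the $\phi_j$ are orthonormal, $N$ is normal of rank at most $m$, $\|N\|=\max_j|q_j-\mu_j|\le\tfrac{\epsilon}{3}<\epsilon$, and $\langle(T+N)\phi_j,\phi_j\rangle=\langle T\phi_j,\phi_j\rangle+(q_j-\mu_j)$ lies within $\tfrac{\epsilon}{8}$ of $q_j$.

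Now $W(T+N)$ is convex and contains these $m$ points, hence contains the convex hull $K$ of a set of points each within $\tfrac{\epsilon}{8}$ of the corresponding $q_j$. A short separating‑hyperplane argument shows that moving the vertices of $Q$ by at most $\tfrac{\epsilon}{8}$ shrinks the hull by at most $\tfrac{\epsilon}{8}$, so $K\supseteq W_e(T)+\tfrac{\epsilon}{8}\overline{\mathbb{D}}$; in particular $W_e(T)\subseteq\mathrm{int}(K)\subseteq\mathrm{int}\big(\overline{W(T+N)}\big)$, which is exactly what was required. Hence $N$ is the desired normal finite rank perturbation, completing the proof.

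I expect the main obstacle to be the key lemma $\overline{W(B)}\setminus W(B)\subseteq W_e(B)$, which is the only non‑elementary ingredient and is where the real content sits; the remaining steps are a packing argument together with the elementary stability of convex hulls under small vertex perturbations. One should also note that $W_e(T)$ might degenerate to a point or a segment, but the construction above still works unchanged, since the polygon $Q$ is genuinely two‑dimensional and therefore $\mathrm{int}(K)$ is a non‑empty open set containing $W_e(T)$ in every case.
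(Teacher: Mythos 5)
The paper itself offers no proof of this statement; it is imported verbatim from Bourin \cite{Bourin_JOT_2003}, so your argument has to stand on its own (and, in spirit, it does follow the same essential-numerical-range route as the cited source). The constructive half of your proposal is sound: the compression characterization of $W_e(T)$ does produce orthonormal $\phi_1,\dots,\phi_m$ with $\langle T\phi_j,\phi_j\rangle$ within $\epsilon/8$ of $\mu_j$, the operator $N=\sum_j(q_j-\mu_j)\langle\cdot,\phi_j\rangle\phi_j$ is normal, finite rank, with $\|N\|\le\epsilon/3$, and your erosion estimate is correct, so $W_e(T)+\tfrac{\epsilon}{8}\overline{\mathbb{D}}\subseteq W(T+N)$ and hence $W_e(T)\subseteq\mathrm{int}\bigl(\overline{W(T+N)}\bigr)$.

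The genuine gap is the ``key classical lemma'' $\overline{W(B)}\setminus W(B)\subseteq W_e(B)$, which you flag as the crux: it is false as stated. Take $B=[\,i\,]\oplus\mathrm{diag}\bigl(1-\tfrac1n\bigr)_{n\ge1}$ on $\mathbb{C}\oplus\ell_2$. Then $W(B)=\mathrm{conv}\bigl(\{i\}\cup[0,1)\bigr)$, so $\overline{W(B)}$ is the triangle with vertices $i,0,1$ and $\overline{W(B)}\setminus W(B)$ is the whole edge from $1$ to $i$ with only $i$ removed, whereas $W_e(B)=\{1\}$ (the sole accumulation point of the eigenvalues); non-attained boundary points need not be essential. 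What is true is Lancaster's theorem, $\overline{W(B)}=\mathrm{conv}\bigl(W(B)\cup W_e(B)\bigr)$ (cf. also \cite{FSW_1972}), from which only the \emph{extreme} points of $\overline{W(B)}$ not in $W(B)$ must lie in $W_e(B)$. Fortunately your construction is strong enough to run with this weaker fact: since $W_e(T+N)=W_e(T)\subseteq\mathrm{int}\bigl(\overline{W(T+N)}\bigr)$ and an extreme point cannot be interior, every extreme point of $\overline{W(T+N)}$ lies in $W(T+N)$; by Minkowski's theorem a compact convex planar set equals the convex hull (no closure needed) of its extreme points, and $W(T+N)$ is convex, so $\overline{W(T+N)}\subseteq W(T+N)$. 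So the strategy is viable, but as written the decisive step invokes a false inclusion and must be replaced by Lancaster's theorem together with this extreme-point argument.
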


%In the following theorem we generalize the above result in the setting of semi-Hilbertian structure. 

\begin{theorem}\label{T1}
	Let $T \in \mathcal{B}_{{A}^{1/2}}(\mathcal{H})$ and $R(A)$ be closed. Then given any $\epsilon>0$ there exists a $A$-normal operator $N$ with $\|N\|_A<\epsilon$ and $AN$ is of finite rank such that $\overline{W_A(T+N)}=W_A(T+N).$
\end{theorem}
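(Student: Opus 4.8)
The plan is to transfer the problem to the Hilbert space $\textbf{R}(A^{1/2})$ via the map $T \mapsto \widetilde{T}$ and then invoke Theorem \ref{lemma7}. First I would observe that since $T \in \mathcal{B}_{A^{1/2}}(\mathcal{H})$, Lemma \ref{lemma1-2} (i) gives a unique $\widetilde{T} \in \mathcal{B}(\textbf{R}(A^{1/2}))$ with $Z_AT = \widetilde{T}Z_A$. Applying Theorem \ref{lemma7} to $\widetilde{T}$ on the Hilbert space $\textbf{R}(A^{1/2})$, for the given $\epsilon > 0$ there is a normal, finite rank operator $\widetilde{N} \in \mathcal{B}(\textbf{R}(A^{1/2}))$ with $\|\widetilde{N}\|_{\mathcal{B}(\textbf{R}(A^{1/2}))} < \epsilon$ and $\overline{W(\widetilde{T}+\widetilde{N})} = W(\widetilde{T}+\widetilde{N})$.

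Next I would pull $\widetilde{N}$ back to an operator on $\mathcal{H}$. This is where the closedness of $R(A)$ is essential: by Lemma \ref{lemma1-2} (ii), since $R(A)$ is closed, there is a unique $N \in \mathcal{B}_{A^{1/2}}(\mathcal{H})$ with $R(N) \subseteq R(A)$ such that $Z_AN = \widetilde{N}Z_A$, i.e. the operator associated to $N$ is exactly $\widetilde{N}$. Then I must check the three required properties of $N$. The norm condition $\|N\|_A < \epsilon$ follows from Lemma \ref{lemma3-6} (ii), which says $\|N\|_A = \|\widetilde{N}\|_{\mathcal{B}(\textbf{R}(A^{1/2}))}$. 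For $A$-normality of $N$: since $\widetilde{N}$ is normal and $R(A)$ is closed, I need to know $N \in \mathcal{B}_A(\mathcal{H})$ so that Definition \ref{d1} and Theorem \ref{L1} (i) apply; because $R(N) \subseteq R(A) \subseteq \overline{R(A)}$ one checks $N \in \mathcal{B}_A(\mathcal{H})$ (the $A$-adjoint exists), and then Theorem \ref{L1} (i) — normality of $\widetilde{N}$ is equivalent to $A$-normality of $N$ — gives that $N$ is $A$-normal. For the finite rank of $AN$: since $\widetilde{N}$ has finite rank and $\widetilde{N}$ is (essentially) the restriction of $AN$ read through $Z_A$, the range of $\widetilde{N}$ being finite-dimensional forces $R(AN) = R(Z_A N) = R(\widetilde{N}Z_A) \subseteq R(\widetilde{N})$ to be finite-dimensional, so $AN$ has finite rank.

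Finally I would conclude the $A$-numerical range identity. By Lemma \ref{lemma1-2} (i), $\widetilde{T+N} = \widetilde{T} + \widetilde{N}$, and since $R(A)$ is closed Lemma \ref{lemma3-6} (iv) gives $W_A(T+N) = W(\widetilde{T+N}) = W(\widetilde{T}+\widetilde{N})$. The latter set is closed by the choice of $\widetilde{N}$, hence $\overline{W_A(T+N)} = W_A(T+N)$, as desired.

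The main obstacle I anticipate is the bookkeeping around pulling $\widetilde{N}$ back to a genuine $A$-normal operator on $\mathcal{H}$ — specifically verifying that the pulled-back $N$ lies in $\mathcal{B}_A(\mathcal{H})$ (not merely $\mathcal{B}_{A^{1/2}}(\mathcal{H})$) so that the notion of $A$-normality from Definition \ref{d1} is available, and confirming that the ``finite rank'' of $\widetilde{N}$ translates precisely to finite rank of $AN$ rather than of $N$ itself. These are exactly the points where the hypothesis that $R(A)$ is closed does the work, via the last sentence of Lemma \ref{lemma1-2} (ii); the rest is a routine transfer along the dictionary $T \leftrightarrow \widetilde{T}$ already established in Section \ref{s1}.
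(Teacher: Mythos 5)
Your proposal is correct and follows essentially the same route as the paper: transfer to $\textbf{R}(A^{1/2})$ via $Z_A$, apply Theorem \ref{lemma7} to $\widetilde{T}$, pull the normal finite-rank $\widetilde{N}$ back using Lemma \ref{lemma1-2} (ii) (this is where closedness of $R(A)$ enters), and then read off $A$-normality from Theorem \ref{L1} (i), the norm bound from Lemma \ref{lemma3-6} (ii), finite rank of $AN$ from $R(AN)=R(\widetilde{N}Z_A)\subseteq R(\widetilde{N})$, and closedness of $W_A(T+N)$ from Lemma \ref{lemma1-2} (i) together with Lemma \ref{lemma3-6} (iv). The only point where your wording is slightly off is the justification that $N\in\mathcal{B}_A(\mathcal{H})$: this follows not from $R(N)\subseteq R(A)$ but from the closedness of $R(A)$, which forces $R(A^{1/2})=R(A)$ and hence $R(N^*A)\subseteq R(N^*A^{1/2})\subseteq R(A^{1/2})=R(A)$ for any $N\in\mathcal{B}_{A^{1/2}}(\mathcal{H})$ — a point the paper itself passes over just as quickly.
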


\begin{proof}
	Since  $T \in \mathcal{B}_{A^{1/2}}(\mathcal{H})$, then by Theorem \ref{lemma1-2} (i) there exists a unique $\widetilde{T} \in
	\mathcal{B}(\textbf{R}(A^{1/2}))$ such that $Z_AT=\widetilde{T} Z_A$, where $Z_Ax=Ax$ for all $x\in \mathcal{H}.$ Now given  $\epsilon>0$ and  $\widetilde{T} \in \mathcal{B}(\textbf{R}(A^{1/2})),$  applying Lemma \ref{lemma7}  we get a  normal, finite rank operator $\widetilde{N} \in \mathcal{B}(\textbf{R}(A^{1/2}))$ with $\|\widetilde{N}\|_{\mathcal{B}(\textbf{R}(A^{1/2}))}<\epsilon$ such that $\overline{W(\widetilde{T}+\widetilde{N})}=W(\widetilde{T}+\widetilde{N}).$ As $R(A)$ is closed so it follows from  Lemma \ref{lemma1-2} (ii) that there exists a unique $N \in \mathcal{B}_{A}(\mathcal{H})$ satisfying $Z_AN=\widetilde{N}Z_A.$ Therefore, by applying Theorem \ref{L1} (i) we get $N$ is $A$-normal. Now, it follows from Lemma \ref{lemma1-2} (i) and Lemma \ref{lemma3-6} (iv) that $\overline{W_A(T+N)}=W_A(T+N).$ Also by applying Lemma \ref{lemma3-6} (ii), we get $\|N\|_A<\epsilon.$ As $R(A)$ is closed so $R(A)=R(A^{1/2})$ and by Lemma \ref{lemma1-2} (i) it follows that $\widetilde{N}(R(A))=AN(\mathcal{H}).$ Since $\widetilde{N}$ is of finite rank so rank of $AN$ is also of finite rank. This completes the proof.
\end{proof}

Recently in \cite{liang_JMAA_2023} it is proved that CSO class is strongly numerically closed class.

\begin{theorem}\cite{liang_JMAA_2023}\label{lemma8}
	Let $T$ be a $CSO.$ Then given any $\epsilon>0,$ there exists a $K \in \mathcal{K}(\mathcal{H})$ with $\|K\|<\epsilon,$ such that $T+K \in CSO$ and $\overline{W(T+K)}=W(T+K).$
\end{theorem}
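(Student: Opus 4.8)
The plan is to reduce the problem to a perturbation that respects the complex symmetric structure, and then to close up the numerical range by a carefully chosen compact correction. Fix a conjugation $C$ (an antilinear, isometric involution) with $CTC = T^*$, which exists because $T$ is a $CSO$. The key observation is that if a perturbation $K$ satisfies $CKC = K^*$ --- call such an operator \emph{$C$-symmetric} --- then $C(T+K)C = T^* + K^* = (T+K)^*$, so $T+K$ is again complex symmetric with respect to the \emph{same} conjugation $C$, hence a $CSO$. Thus it suffices to produce a compact, $C$-symmetric $K$ with $\|K\| < \epsilon$ such that $\overline{W(T+K)} = W(T+K)$. Moreover, fixing a $C$-real orthonormal basis $\{e_n\}$ (so $Ce_n = e_n$), one checks directly that each rank-one operator $\mu\, e\otimes e$ (where $e$ is a unit $C$-real vector and $(e\otimes e)x = \langle x,e\rangle e$) satisfies $C(\mu\, e\otimes e)C = \bar\mu\, e\otimes e = (\mu\, e\otimes e)^*$; these $C$-symmetric rank-one blocks, together with $C$-symmetric rank-two blocks built from pairs of $C$-real vectors, are the building material for $K$.

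Next I would translate the closedness requirement into a statement about the essential numerical range $W_e(T)$. Recall that $W_e(T)$ is compact, convex, contained in $\overline{W(T)}$, and invariant under compact perturbations, so $W_e(T+K) = W_e(T)$ for every compact $K$. Using the classical boundary theory of the numerical range --- namely $\overline{W(S)} = conv\big(W(S)\cup W_e(S)\big)$, together with the fact that any boundary point of $\overline{W(S)}$ lying outside $W_e(S)$ is an attained (normal) eigenvalue --- one obtains that the non-attained points of $\overline{W(S)}$ are confined to $\partial\overline{W(S)} \cap W_e(S)$. Consequently $W(T+K)$ is closed precisely when every non-attained extreme point of $\overline{W(T+K)}$ is repaired, and all such points lie in the \emph{fixed} compact convex set $W_e(T)$. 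This isolates a defect set $D \subseteq \partial\overline{W(T)} \cap W_e(T)$ of boundary directions that the perturbation must address.

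The construction then proceeds by attaching, along the $C$-real basis, small $C$-symmetric finite-rank blocks that force attainment at each defect direction while trimming the corresponding corner of $\overline{W}$. Concretely, I would repair the finitely many defects of appreciable depth (as in Bourin's finite-rank scheme) by $C$-symmetric blocks $\mu_j\, f_j\otimes f_j$, with $f_j$ a suitable $C$-real unit vector and $\mu_j$ chosen to realize the required boundary value as a genuine eigenvalue of $T+K$, and handle the remaining shallow defects by a rapidly decaying tail. Setting $K = \sum_j \mu_j\, f_j\otimes f_j$ with $\sum_j |\mu_j| < \epsilon$ makes $K$ compact (only finitely many blocks exceed any fixed threshold) and forces $\|K\| < \epsilon$; $C$-symmetry of each block gives $CKC = K^*$, so $T+K$ remains a $CSO$. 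Verifying that this $K$ attains every extreme point of $\overline{W(T+K)}$ lying in $W_e(T)$, so that $\overline{W(T+K)} = W(T+K)$, completes the proof.

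The main obstacle is exactly this construction step and its convex-geometric bookkeeping. One cannot simply invoke Bourin's Theorem \ref{lemma7}, because the normal finite-rank perturbation it furnishes need not be $C$-symmetric and therefore will not preserve complex symmetry; the entire difficulty is to achieve Bourin-type closure using only $C$-symmetric corrections. The delicate points are (i) parametrizing the defect set $D$ so that each defect is curable by a single $C$-real block without undoing the repairs at the other defects, and (ii) controlling the cumulative norm so that compactness and the bound $\|K\| < \epsilon$ hold simultaneously. I would isolate these as a lemma asserting that, for a $CSO$ whose numerical-range closure is governed by its essential numerical range, the non-attained extreme boundary points can be attained by an arbitrarily small $C$-symmetric compact perturbation.
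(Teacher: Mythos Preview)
The paper does not prove this theorem at all; it is quoted verbatim from \cite{liang_JMAA_2023} and used as a black box in the proof of Theorem~\ref{T2}. So there is no in-paper argument to compare against.

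As to your proposal on its own merits: the overall strategy is the right one --- fix the conjugation $C$, insist that the perturbation $K$ be $C$-symmetric so that $T+K$ stays complex symmetric with respect to the same $C$, and then try to mimic Bourin's closure mechanism within that constraint. Your verification that rank-one blocks $\mu\, e\otimes e$ built from $C$-real unit vectors are automatically $C$-symmetric is correct and these are indeed the natural building material. Invoking Lancaster's theorem ($\overline{W(S)}\setminus W(S)\subseteq W_e(S)$) together with the compact-perturbation invariance of $W_e$ to localize the defect set is also the right framework, and the reduction ``$W(T+K)$ closed $\iff$ every extreme point of $\overline{W(T+K)}$ is attained'' is valid in $\mathbb{C}\cong\mathbb{R}^2$ by Minkowski's theorem and convexity of the numerical range.

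However, the proposal is explicitly a plan rather than a proof, and you yourself flag the gap: the construction step is never carried out. The substantive difficulty you have not resolved is your point (i) --- showing that a non-attained extreme point $\lambda\in\partial\overline{W(T)}\cap W_e(T)$ can be made attained by adding a block $\mu\,f\otimes f$ with $f$ \emph{$C$-real}, and that repairing one defect does not spawn new ones. Bourin's scheme selects $f$ from an approximate-eigenvector sequence witnessing $\lambda\in W_e(T)$, but such vectors have no reason to be $C$-real; forcing $C$-reality (or replacing the rank-one block by a $C$-symmetric rank-two block built from $f$ and $Cf$) is precisely where the work in \cite{liang_JMAA_2023} lies, and your sketch supplies no substitute argument. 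A secondary loose end is the norm estimate: $\bigl\|\sum_j \mu_j\,f_j\otimes f_j\bigr\|\le\sum_j|\mu_j|$ requires the $f_j$ to be orthonormal, which you have not arranged. Until the $C$-real (or $C$-symmetric) attainment lemma is actually proved, the argument remains an outline.
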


We here generalize the same in the setting of semi-Hilbertian structure. 
\begin{theorem}\label{T2}
	Let $T \in \mathcal{B}_{A^{1/2}}(\mathcal{H})$ and $R(A)$ be closed. If $T$ induces $CSO$ then given $\epsilon>0$ there exists $K \in \mathcal{K}_{{A}^{1/2}}(\mathcal{H})$ with $\|K\|_A<\epsilon,$ such that $T+K$ induces $CSO$ and $\overline{W_A(T+K)}=W_A(T+K).$
\end{theorem}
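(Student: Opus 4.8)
The plan is to transfer the problem to the Hilbert space $\textbf{R}(A^{1/2})$ via the operator $Z_A$ and the correspondence $T \mapsto \widetilde{T}$, apply the already-established Theorem \ref{lemma8} (the CSO class is strongly numerically closed on a genuine Hilbert space), and then pull the resulting perturbation back to $\mathcal{B}_{A^{1/2}}(\mathcal{H})$. This mirrors exactly the pattern used in the proofs of Theorem \ref{T1} and Theorem \ref{t1}.

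First I would observe that since $T \in \mathcal{B}_{A^{1/2}}(\mathcal{H})$, by Lemma \ref{lemma1-2}(i) there is a unique $\widetilde{T} \in \mathcal{B}(\textbf{R}(A^{1/2}))$ with $Z_AT = \widetilde{T}Z_A$. By hypothesis $T$ induces a CSO, so $\widetilde{T}$ is a complex symmetric operator on the Hilbert space $\textbf{R}(A^{1/2})$. Next, given $\epsilon > 0$, apply Theorem \ref{lemma8} to $\widetilde{T}$: there exists $\widetilde{K} \in \mathcal{K}(\textbf{R}(A^{1/2}))$ with $\|\widetilde{K}\|_{\mathcal{B}(\textbf{R}(A^{1/2}))} < \epsilon$, such that $\widetilde{T} + \widetilde{K}$ is a CSO and $\overline{W(\widetilde{T}+\widetilde{K})} = W(\widetilde{T}+\widetilde{K})$.

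The third step is the pullback, which is where the closedness of $R(A)$ is essential. Since $R(A)$ is closed, Lemma \ref{lemma1-2}(ii) produces a unique $K \in \mathcal{B}_{A^{1/2}}(\mathcal{H})$ with $R(K) \subseteq R(A)$ such that $Z_AK = \widetilde{K}Z_A$, i.e. $\widetilde{K}$ is precisely the operator associated to $K$. Because $R(A)$ is closed and $\widetilde{K}$ is compact, Lemma \ref{lemma3-6}(iii) gives $K \in \mathcal{K}_{A^{1/2}}(\mathcal{H})$; Lemma \ref{lemma3-6}(ii) gives $\|K\|_A = \|\widetilde{K}\|_{\mathcal{B}(\textbf{R}(A^{1/2}))} < \epsilon$. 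By Lemma \ref{lemma1-2}(i), $\widetilde{T+K} = \widetilde{T} + \widetilde{K}$, so $T+K$ induces a CSO since $\widetilde{T}+\widetilde{K}$ is a CSO. Finally, by Lemma \ref{lemma3-6}(iv), closedness of $R(A)$ forces $W_A(T+K) = W(\widetilde{T}+\widetilde{K})$, which is closed by construction; hence $\overline{W_A(T+K)} = W_A(T+K)$, completing the proof.

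I do not anticipate a genuine obstacle here — the result is essentially a dictionary translation using the machinery assembled in Sections 1 and 2. The one point requiring mild care is making sure the $K$ extracted in the pullback step is the \emph{same} operator whose image under $\sim$ is $\widetilde{K}$ (so that both the compactness transfer via Lemma \ref{lemma3-6}(iii) and the norm identity via Lemma \ref{lemma3-6}(ii) apply to it), and that Lemma \ref{lemma3-6}(iv)'s equality $W_A(\cdot) = W(\widetilde{\cdot})$ is invoked only under the closedness assumption on $R(A)$, which is in force throughout. One could also remark, paralleling the discussion after Theorem \ref{t1} and Theorem \ref{P2}, that closedness of $R(A)$ is sufficient but may not be necessary.
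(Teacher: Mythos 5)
Your proposal is correct and follows essentially the same route as the paper's own proof: lift $T$ to $\widetilde{T}$ on $\textbf{R}(A^{1/2})$, apply Theorem \ref{lemma8} there, and pull the compact perturbation back via Lemma \ref{lemma1-2}(ii) together with Lemma \ref{lemma3-6}(ii), (iii), (iv) under the closedness of $R(A)$. No gaps to report.
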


\begin{proof}
	Since  $T$ induces $CSO$ then from the definition of  $\widetilde{T} $ it follows that  $\widetilde{T} $  is a $CSO$ in $ \mathcal{B}(\textbf{R}(A^{1/2})).$  For the CSO operator $\widetilde{T} $ and $\epsilon>0,$ using Theorem \ref{lemma8},  we get $\widetilde{K}\in \mathcal{K}(\textbf{R}(A^{1/2}))$ with $\|\widetilde{K}\|_{\mathcal{B}(\textbf{R}(A^{1/2}))}<\epsilon$ such that $\widetilde{T}+\widetilde{K} \in CSO$ and $\overline{W(\widetilde{T}+\widetilde{K})}=W(\widetilde{T}+\widetilde{K}).$ Since $R(A)$ is closed so it follows from  Lemma \ref{lemma1-2} (ii) that there exists a unique $K \in \mathcal{B}_{A^{1/2}}(\mathcal{H})$  such that $Z_AK= \widetilde{K}Z_A.$  Now from Lemma \ref{lemma3-6} (iii) we get $K \in \mathcal{K}_{A^{1/2}}(\mathcal{H}).$ Since $\widetilde{T+K}=\widetilde{T}+\widetilde{K} \in$ $CSO$ so from Lemma \ref{lemma1-2} (ii) it follows that $T+K$ induces $CSO.$  Now, from Lemma \ref{lemma1-2} (i) and Lemma \ref{lemma3-6} (iv) we have, $\overline{W_A(T+K)}=W_A(T+K).$ Finally, by applying Lemma \ref{lemma3-6} (ii) we get $\|K\|_A<\epsilon,$ and this completes the proof.
\end{proof}

Finally we show that the class of $A$-normal operators is strongly $A$-numerically closed. For this we need the following result.
\begin{theorem}\cite{zhu_BJMA_2015}\label{theorem9}
	If $T$ is normal and if $\epsilon>0,$ then there exists a $K \in \mathcal{K}(\mathcal{H})$  with $\|K\|<\epsilon,$ such that $T+K$ is normal and $\overline{W(T+K)}=W(T+K).$
\end{theorem}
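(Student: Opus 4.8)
The plan is to reduce the problem, via the spectral theorem, to a statement about the convex geometry of $conv(\sigma(T))$ and then to realize the required perturbation diagonally, so that normality is preserved automatically. Since $T$ is normal, $\overline{W(T)}=conv(\sigma(T))$ is a compact convex subset of $\mathbb{C}$, and because $W(T)$ is convex, $W(T)=\overline{W(T)}$ holds if and only if every boundary point of $conv(\sigma(T))$ is attained; by convexity it suffices to attain the extreme points, since the interior of any flat edge is automatically attained once its endpoints are. The first step is to characterize the attained extreme points. Writing the scalar spectral measure $\mu_x(\cdot)=\langle E(\cdot)x,x\rangle$ of a unit vector $x$, one has $\langle Tx,x\rangle=\int\lambda\,d\mu_x(\lambda)$, and a supporting-line (barycenter) argument shows that an extreme point $\lambda_0$ of $conv(\sigma(T))$ lies in $W(T)$ if and only if $E(\{\lambda_0\})\neq 0$, i.e. $\lambda_0$ is a genuine eigenvalue of $T$. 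Thus closedness of $W(T)$ is equivalent to the condition that every extreme point of $conv(\sigma(T))$ be an eigenvalue, and the whole task becomes: perturb $T$ so that this holds.

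To build the perturbation I would first invoke the Weyl--von Neumann--Berg theorem to write $T=D+K_0$, where $D=\sum_n d_n\langle\cdot,e_n\rangle e_n$ is a diagonal normal operator in some orthonormal basis $\{e_n\}$ and $K_0\in\mathcal{K}(\mathcal{H})$ with $\|K_0\|<\epsilon/2$. Every diagonal entry $d_n$ is then an eigenvalue of $D$, so the only candidates for non-attained extreme points of $conv(\sigma(D))$ are limit points of $\{d_n\}$ that are extreme but are not themselves attained. For each such offending extreme point $\lambda_0$ I would choose a large index $n$ with $d_n$ close to $\lambda_0$ and reset that single diagonal entry to $\lambda_0$; carried out over all offending extreme points with the chosen indices tending to infinity and the correction norms $|d_n-\lambda_0|$ summable and bounded by $\epsilon/2$, this yields a compact diagonal operator $K_1$ and a new diagonal (hence normal) operator $D'=D+K_1$ all of whose extreme points of $conv(\sigma(D'))$ are eigenvalues. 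Setting $K=-K_0+K_1$ gives $K\in\mathcal{K}(\mathcal{H})$ with $\|K\|\le\|K_0\|+\|K_1\|<\epsilon$ and $T+K=D'$; the criterion of the first step then delivers $\overline{W(T+K)}=W(T+K)$.

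The main obstacle is the control of the extreme points. The diagonal-resetting scheme works cleanly when $conv(\sigma(T))$ has only countably many extreme points — for instance when $\sigma(T)$ is countable or $conv(\sigma(T))$ is a polygon — because then each offending corner can be assigned its own index and corrected by a summable family of rank-one diagonal (hence normal) moves. The genuinely delicate situation is when the boundary of $conv(\sigma(T))$ contains a strictly convex arc, so that a continuum of extreme points lies in $\sigma_{ess}(T)$: compact perturbations leave $\sigma_{ess}$ invariant, while a separable space admits at most countably many orthogonal eigenvectors, so one cannot make uncountably many boundary points into eigenvalues simultaneously. This curved-boundary phenomenon is therefore the crucial point on which the argument turns, and I would organize the proof around enumerating the extreme points and exploiting the structure of $\sigma(T)$, isolating that case as the step demanding the most care.
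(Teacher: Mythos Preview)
The paper does not prove this statement; it is quoted verbatim from \cite{zhu_BJMA_2015} and then invoked as a black box in the proof of Theorem~\ref{T3}. So there is no in-paper argument to compare against, and your attempt has to stand on its own.

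Your reduction is correct: for normal $T$ one has $\overline{W(T)}=conv(\sigma(T))$, a compact convex subset of $\mathbb{C}$, hence (Minkowski--Carath\'eodory in $\mathbb{R}^2$) equal to the convex hull of its extreme points without taking a closure; together with your barycentre argument this gives that $W(T)$ is closed iff every extreme point of $conv(\sigma(T))$ is an eigenvalue. Passing to a diagonal model $D$ via Weyl--von~Neumann--Berg and then editing diagonal entries is also the right mechanism.

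The gap is precisely the case you flag but do not resolve. If $\partial\,conv(\sigma(T))$ contains a strictly convex arc then uncountably many extreme points lie in $\sigma_{ess}(T)$; compact perturbations leave $\sigma_{ess}$ unchanged and a separable space carries at most countably many mutually orthogonal eigenvectors, so your plan of turning each offending extreme point into an eigenvalue is provably impossible there. Announcing that this step ``demands the most care'' is not a substitute for carrying it out, and nothing in your outline indicates how you would.

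A way through is to abandon the attempt to realise the \emph{existing} extreme points and instead change the shape of the hull. Choose a polygon $conv\{q_1,\dots,q_m\}\supseteq conv(\sigma(D))$ whose finitely many vertices $q_j$ are each within $\epsilon/2$ of some diagonal entry $d_{n_j}$; this is possible because the vertices of a sufficiently fine circumscribing polygon cluster near the extreme points of $conv(\sigma(D))$, and those lie in $\overline{\{d_n\}}$. Resetting $d_{n_j}\mapsto q_j$ for $j=1,\dots,m$ is a finite-rank diagonal perturbation $K_1$ with $\|K_1\|<\epsilon/2$; the new diagonal $D'=D+K_1$ satisfies $conv(\sigma(D'))=conv\{q_1,\dots,q_m\}$, a polygon all of whose vertices are eigenvalues, so $W(D')$ is closed by your own criterion. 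With $K=-K_0+K_1$ one gets $T+K=D'$ normal, $\|K\|<\epsilon$, and the curved-arc obstruction has simply disappeared because the perturbed hull has no curved arc.
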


\begin{theorem}\label{T3}
	Let $T$ be $A$-normal and $R(A)$ be closed. Then given $\epsilon>0$ there exists $K \in \mathcal{K}_{{A}^{1/2}}(\mathcal{H})$ with $\|K\|_A<\epsilon,$ such that $T+K$ is $A$-normal and $\overline{W_A(T+K)}=W_A(T+K).$
\end{theorem}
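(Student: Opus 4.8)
The plan is to transfer the problem to the Hilbert space $\textbf{R}(A^{1/2})$ via the map $T \mapsto \widetilde{T}$, exactly as in the proofs of Theorems \ref{T1} and \ref{T2}, and then apply the classical perturbation result Theorem \ref{theorem9} (due to \cite{zhu_BJMA_2015}). Since $T$ is $A$-normal, Theorem \ref{L1}(i) gives that $\widetilde{T} \in \mathcal{B}(\textbf{R}(A^{1/2}))$ is normal. Applying Theorem \ref{theorem9} to the normal operator $\widetilde{T}$ and the given $\epsilon > 0$, I obtain a compact operator $\widetilde{K} \in \mathcal{K}(\textbf{R}(A^{1/2}))$ with $\|\widetilde{K}\|_{\mathcal{B}(\textbf{R}(A^{1/2}))} < \epsilon$ such that $\widetilde{T} + \widetilde{K}$ is normal and $\overline{W(\widetilde{T} + \widetilde{K})} = W(\widetilde{T} + \widetilde{K})$.

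The next step is to pull $\widetilde{K}$ back to $\mathcal{H}$. Because $R(A)$ is closed, Lemma \ref{lemma1-2}(ii) provides a unique $K \in \mathcal{B}_{A^{1/2}}(\mathcal{H})$ with $Z_A K = \widetilde{K} Z_A$, and Lemma \ref{lemma3-6}(iii) then guarantees $K \in \mathcal{K}_{A^{1/2}}(\mathcal{H})$ since $\widetilde{K}$ is compact. By Lemma \ref{lemma3-6}(ii), $\|K\|_A = \|\widetilde{K}\|_{\mathcal{B}(\textbf{R}(A^{1/2}))} < \epsilon$. Using Lemma \ref{lemma1-2}(i), $\widetilde{T + K} = \widetilde{T} + \widetilde{K}$, which is normal; hence, invoking Theorem \ref{L1}(i) in the reverse direction, $T + K$ is $A$-normal. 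Finally, since $\overline{W(\widetilde{T+K})} = W(\widetilde{T+K})$ and $R(A)$ is closed, Lemma \ref{lemma3-6}(iv) yields $W_A(T+K) = W(\widetilde{T+K})$, so $\overline{W_A(T+K)} = \overline{W(\widetilde{T+K})} = W(\widetilde{T+K}) = W_A(T+K)$, completing the proof.

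I expect no serious obstacle here: the argument is essentially a verbatim adaptation of the proof of Theorem \ref{T2}, with Theorem \ref{lemma8} replaced by Theorem \ref{theorem9} and the property "induces CSO" replaced by "$A$-normal" (the latter being characterized spectrally by normality of $\widetilde{T}$ through Theorem \ref{L1}(i)). The only point requiring a little care is confirming that $T + K \in \mathcal{B}_A(\mathcal{H})$ rather than merely $\mathcal{B}_{A^{1/2}}(\mathcal{H})$, so that Definition \ref{d1} applies — but this follows since $\widetilde{T+K}$ normal forces $R((T+K)(T+K)^{\sharp_A})$ to behave correctly, or more simply by noting that the $A$-normality characterization in Theorem \ref{L1}(i) is stated for operators in $\mathcal{B}_A(\mathcal{H})$ and the construction keeps us in that class since $N$ (here $K$) can be chosen in $\mathcal{B}_A(\mathcal{H})$ as in Theorem \ref{T1}. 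The closedness of $R(A)$ is used crucially in three places: to get $K$ genuinely $A$-compact (Lemma \ref{lemma3-6}(iii)), to pull back $\widetilde{K}$ (Lemma \ref{lemma1-2}(ii)), and to get the exact equality $W_A = W(\widetilde{\cdot})$ (Lemma \ref{lemma3-6}(iv)).
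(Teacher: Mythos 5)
Your proposal is correct and follows essentially the same route as the paper's own proof: transfer to $\textbf{R}(A^{1/2})$ via Theorem \ref{L1}(i), apply Theorem \ref{theorem9} to $\widetilde{T}$, pull the compact perturbation back using Lemma \ref{lemma1-2}(ii) and Lemma \ref{lemma3-6}(ii)--(iv), and conclude $A$-normality of $T+K$ from normality of $\widetilde{T+K}$. Your extra remark about checking $T+K\in\mathcal{B}_A(\mathcal{H})$ is a point the paper glosses over as well, so nothing in your argument diverges from the published proof.
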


\begin{proof}
	Since  $T$ is $A$-normal then from Theorem \ref{L1} (i) we have $\widetilde{T}$ is normal. Now given  $\epsilon>0$ and   $\widetilde{T} \in  {\mathcal{B}(\textbf{R}(A^{1/2}))} $ using Theorem  \ref{theorem9},  we get  $\widetilde{K}\in \mathcal{K}(\textbf{R}(A^{1/2}))$ with $\|\widetilde{K}\|_{\mathcal{B}(\textbf{R}(A^{1/2}))}<\epsilon$ such that $\widetilde{T}+\widetilde{K} $ is normal and $\overline{W(\widetilde{T}+\widetilde{K})}=W(\widetilde{T}+\widetilde{K}).$ As $R(A)$ is closed so from  Lemma \ref{lemma1-2} (ii) there exists a unique $K \in \mathcal{B}_{A^{1/2}}(\mathcal{H})$  such that $Z_AK= \widetilde{K}Z_A.$ Now from Lemma \ref{lemma3-6} (iii) we get $K \in \mathcal{K}_{A^{1/2}}(\mathcal{H}).$ Since $\widetilde{T+K}=\widetilde{T}+\widetilde{K}$ is normal so from Lemma \ref{lemma1-2} (ii) and Theorem \ref{L1} it follows that $T+K$ is $A$-normal. Now, from Lemma \ref{lemma1-2} (i) and Lemma \ref{lemma3-6} (iv) we have $\overline{W_A(T+K)}=W_A(T+K).$ Also by applying Lemma \ref{lemma3-6} (ii), we get $\|K\|_A<\epsilon.$ This completes the proof.
\end{proof}

The following example shows that closedness of $R(A)$ is not necessary for Theorem \ref{T3}.
%Next we give an example to show that  $R(A)$ is closed, not a necessary condition in Theorem \ref{T3}.

%The following example justifies that $R(A)$ is closed, not a necessary condition in Theorem \ref{T3}.

\begin{example}\label{example1}
Considering the  operators mentioned in  Example \ref{ex1} it is easy to verify that $T$ is $A$-normal 
 and by applying Theorem \ref{T12} we get 
\begin{align*}%\label{Ee1}
\overline{W_A(T)}=conv\{\lambda_n : n \in \mathbb{N}\cup\{0\}\}. 
\end{align*}
Now, we have
\begin{eqnarray*}
	W_A(T)
	= \left\{ \sum_{n=1}^{\infty} \frac{\lambda_n|x_n|^2}{n}:  \sum_{n=1}^{\infty} |x_n|^2 <\infty~~\mbox{and}~~ \sum_{n=1}^{\infty} \frac{|x_n|^2}{n}=1 \right\}.
\end{eqnarray*}
Clearly, for all $n \in \mathbb{N},$ $\lambda_n \in W_A(T)$  and  $\lim_{n \to \infty}\lambda_n=\lambda_0 \in \overline{W_A(T)}.$ Suppose that $\lambda_0 \in W_A(T).$ Thus, $\lambda_0=\sum_{n=1}^{\infty} \lambda_n\frac{|x_n|^2}{n}$ with $\sum_{n=1}^{\infty}\frac{|x_n|^2}{n}=1.$ This implies that $\sum_{n=1}^{\infty} (\lambda_0-\lambda_n)\frac{|x_n|^2}{n}=0,$ and so $\sum_{n=1}^{\infty} Re(\lambda_0-\lambda_n)\frac{|x_n|^2}{n}=0.$ This leads to a contradiction, as $\sum_{n=1}^{\infty}|x_n|^2=1$ and $Re(\lambda_0-\lambda_n)>0$ for all $ n \in \mathbb{N}.$ Thus for this example $W_A(T)$ is not closed.

Given $\epsilon>0$ there exists $n_0 \in \mathbb{N}$ such that $|\lambda_n-\lambda_0|<\epsilon$ for all $n \geq n_0.$ Now, we define the bounded linear operator $ K : \ell_2 \to \ell_2$ by
 $$Ke_n=\begin{cases}
 	(\lambda_0-\lambda_{n_0})e_{n_0}~\mbox{for $n=n_0$}\\
 	\,\,\,\,\,\,\,\,\,0~~~\,\,\,\,\,\,\,\,\,\,\,\,\,\,\,\,\,\,\,\,\,~\mbox{for $n\neq n_0$}.
 \end{cases}$$
 Then 
 $$(T+K)e_n=\begin{cases}
 	\lambda_0e_{n_0}~\,\mbox{for $n=n_0$}\\
 	\lambda_ne_{n}~\,\,\,\mbox{for $n\neq n_0$}.
 \end{cases}$$
It is easy to observe that $\|K\|_A=|\lambda_0-\lambda_{n_0}|<\epsilon$ and $K \in \mathcal{K}_{A^{1/2}}(\ell_2).$ Proceeding similarly as in Example \ref{ex1} we can infer that $T+K$ is $A$-normal and $\overline{W_A(T+K)}=conv\{\lambda_0,\lambda_n : n \in \mathbb{N}\setminus \{n_0\}\}.$ Again
\begin{eqnarray*}
	W_A(T+K)
	= \left\{\frac{\lambda_0|x_{n_0}|^2}{n_0}+ \sum_{n=1,n\neq n_0}^{\infty} \frac{\lambda_n|x_n|^2}{n}:  \sum_{n=1}^{\infty} |x_n|^2 <\infty~~\mbox{and}~~ \sum_{n=1}^{\infty} \frac{|x_n|^2}{n}=1 \right\}.
\end{eqnarray*}
Clearly, $\{\lambda_0,\lambda_n : n \in \mathbb{N}\setminus \{n_0\}\} \subseteq W_A(T+K)$ and so
${W_A(T+K)}=conv\{\lambda_0,\lambda_n : n \in \mathbb{N}\setminus \{n_0\}\}.$ Hence $\overline{W_A(T+K)}=W_A(T+K).$ 
\end{example}

\begin{remark}
	The question that remains to be answered is that whether Theorems \ref{T1}, \ref{T2} and \ref{T3} remain valid without the assumption that $R(A)$ is closed.
\end{remark}

\bibliographystyle{amsplain}

\end{document}